\newtheorem{thm}{Theorem}
\newtheorem{defn}{Definition}
\newtheorem{lemma}{Lemma}
\newtheorem{pro}{Proposition}
\newtheorem{rk}{Remark}
\numberwithin{equation}{section} \setcounter{tocdepth}{1}
\begin{document}
\title{\textsf{Gonosomal
		algebras and associated discrete-time dynamical systems}}
{}

\author{U.A. Rozikov,  S.K. Shoyimardonov, R. Varro}

	\address{ U.A. Rozikov$^{a,b,c}$\begin{itemize}
		\item[$^a$] V.I.Romanovskiy Institute of Mathematics,  9, Universitet str., 100174, Tashkent, Uzbekistan;
		\item[$^b$] New Uzbekistan University,
		54,  Mustaqillik ave.,    100007,  Tashkent, Uzbekistan;
		\item[$^c$] National University of Uzbekistan,  4, Universitet str., 100174, Tashkent, Uzbekistan.
\end{itemize}}
\email{rozikovu@yandex.ru}

\address{S.\ K.\ Shoyimardonov\\ V.I.Romanovskiy Institute of Mathematics,  9, Universitet str., 100174, Tashkent, Uzbekistan.}
\email {shoyimardonov@inbox.ru}

\address{R.\ Varro\\Institut Montpelli\'{e}rain Alexander Grothendieck, Universit\'{e}
	de Montpellier.  Place Eug\`{e}ne Bataillon
	34090 Montpellier, France.}
\email {richard.varro@univ-montp3.fr}

\begin{abstract}
In this paper we study the discrete-time dynamical systems associated
with gonosomal algebras used as algebraic model in the sex-linked
genes inheritance. We show that the class of gonosomal algebras
is disjoint from the other non-associative algebras usually studied
(Lie, alternative, Jordan, associative power). To each gonosomal
algebra, with the mapping $x\mapsto\frac{1}{2}x^{2}$,
an evolution operator $W$ is associated that gives the state of the offspring population at the birth stage, then from $W$ we define the operator $V$ which gives the frequency distribution of genetic types.
We study discrete-time dynamical systems generated by these two operators, in particular
we show that the various stability notions of the equilibrium
points are preserved by passing from $W$ to $V$. Moreover, for the evolution operators associated with genetic
disorders in the case of a diallelic gonosomal lethal gene we give complete analysis of fixed and limit points of the dynamical systems.

 \end{abstract}
\maketitle

{\bf Mathematics Subject Classifications (2010).} 17D92; 17D99.

{\bf{Key words.}} Bisexual population, Gonosomal algebra, Quadratic operator, Gonosomal
operator, equilibrium point, limit point.

\section{Introduction}

In most bisexual species sex determination systems are based
on sex chromosomes also called gonosomes (or heterochromosomes,
idiochromosomes, heterosomes, allosomes). Gonosomes, unlike autosomes
are not homologous, they are often of different sizes and in
all cases they have two distinct regions:

-- the pseudoautosomal region corresponds to homologous regions
on the two gonosome types, it carries genes present on the two
types of sex chromosomes that are transmitted in the same manner
as autosomal genes;

-- the differential region carries genes that are present only
on one type of gonosome and have no counterpart on the other
type, we say that these genes are sex-linked or gonosomal.

The chromosomal dimorphism in gonosomes induces an asymmetry
in the transmission of gonosomal genes: for example, for a diallelic
gene three genotypes are observed in one sex and only two in
the other and when an allele is recessive it is always expressed
in one sex and one third of cases in the other. Therefore inheritance
of gonosomal genes is very different from that of autosomal genes.

\medskip{}

Population genetics studies the evolution (dynamics) of frequency distributions
of genetic types (alleles, genotypes, gene collections etc.)
in successive generations under the action of evolutionary forces.
This study is based on the definition and application of an evolution
operator to describe the next generation state knowing that
of the previous generation, i.e., the discrete-time dynamical systems generated by the evolution operator (cf. \cite{Ba}, \cite{Lyub-92}, \cite{RShV}, \cite{WB-80}).

The book \cite{Ba} contains a short history of applications of mathematics to
solving various problems in population dynamics. Moreover, in \cite{Lyub-92} for a class of populations a very effective  algebraic-dynamical theory is developed.

In recent book \cite{Rpd} the theory of discrete-time dynamical
systems and evolution algebras of free and sex linked populations are systematically presented.

In this paper we continue the study initiated in \cite{Varro}, \cite{Roz-Va} on gonosomal algebras and discrete-time dynamical systems modeling sex-linked genes
inheritance. Knowing the inheritance coefficients of a bisexual
panmictic population, we define from these coefficients a gonosomal algebra. Next from a gonosomal algebra we define an evolution
operator $W$ called gonosomal operator. The multivariate quadratic
operator $W$ connects the genetic states of two successive generations.
From the operator $W$ we construct an operator $V$ called the
normalized gonosomal operator of $W$, operator $V$ is composed
of multivariate quadratic rational functions, it connects the
frequency distributions of two successive generations. We study
these two operators and we show that the different stability
notions of equilibrium points for $W$ are retained for $V$. In the last section we study the inheritance dynamics of a diallelic
lethal gonosomal gene.

\section{Evolution operators of a bisexual panmictic population}

In a bisexual panmictic population with discrete nonoverlapping
generations, we consider a gonosomal gene whose genetic types
in females (resp. in males) are $\left(e_{i}\right)_{1\text{\ensuremath{\le}}i\text{\ensuremath{\le}}n}$
(resp. $\left(\widetilde{e}_{j}\right)_{1\text{\ensuremath{\le}}j\text{\ensuremath{\le}}\nu}$).\smallskip{}

We note:\smallskip{}

\begin{itemize}
\item $x_{i}^{\:\left(t\right)}$ (resp. $y_{j}^{\:\left(t\right)}$)
the frequency of type $e_{i}$ (resp. $\widetilde{e}_{j}$) in
females (resp. males) born in generation $t\in\mathbb{N}$, so
$x_{i}^{\:\left(t\right)},y_{j}^{\:\left(t\right)}\geq0$ and
$\sum_{i=1}^{n}x_{i}^{\:\left(t\right)}+\sum_{j=1}^{\nu}y_{j}^{\:\left(t\right)}=1$.
\item $\gamma_{ijk}$ (resp. $\widetilde{\gamma}_{ijr}$) the probability
that a female (resp. a male) offspring is of type $e_{k}$ (resp.
$\widetilde{e}_{r}$) when the parental pair is a female of type
$e_{i}$ and a male of type $\widetilde{e}_{p}$, so $\gamma_{ijk},\widetilde{\gamma}_{ijr}\geq0$
and $\sum_{k=1}^{n}\gamma_{ijk}+\sum_{r=1}^{\nu}\widetilde{\gamma}_{ijr}=1.$\smallskip{}

\end{itemize}
After random mating, the proportion in the generation $t+1$
of female (resp. male) type $e_{k}$ (resp. $\widetilde{e}_{r}$)
offsprings born from the crossing between all possible parents
is
\begin{equation}
\sum_{i,j=1}^{n,\nu}\gamma_{ijk}x_{i}^{\:\left(t\right)}y_{j}^{\:\left(t\right)}\quad\Bigl(\mbox{resp. }\sum_{i,j=1}^{n,\nu}\widetilde{\gamma}_{ijr}x_{i}^{\:\left(t\right)}y_{j}^{\:\left(t\right)}\Bigr).\label{eq:Proportions}
\end{equation}
We deduce that the total number $N\left(t+1\right)$ of the population
at generation $t+1$ is
\begin{eqnarray}
N\left(t+1\right) & = & \sum_{k=1}^{n}\sum_{i,j=1}^{n,\nu}\gamma_{ijk}x_{i}^{\:\left(t\right)}y_{j}^{\:\left(t\right)}+\sum_{r=1}^{\nu}\sum_{i,j=1}^{n,\nu}\widetilde{\gamma}_{ijr}x_{i}^{\:\left(t\right)}y_{j}^{\:\left(t\right)}\label{eq:Eff-Tot}\\
 & = & \Bigl(\sum_{i=1}^{n}x_{i}^{\:\left(t\right)}\Bigr)\Bigl(\sum_{j=1}^{\nu}y_{j}^{\:\left(t\right)}\Bigr)\nonumber
\end{eqnarray}
 therefore if $N\left(t+1\right)\neq0$, the frequency of type
$e_{k}$ (resp. $\widetilde{e}_{r}$) in the generation $t+1$
is given by:
\begin{eqnarray}\label{eq:Frequencies}
x_{k}^{\:\left(t+1\right)} & = & \frac{\sum_{i,j=1}^{n,\nu}\gamma_{ijk}x_{i}^{\:\left(t\right)}y_{j}^{\:\left(t\right)}}{\left(\sum_{i=1}^{n}x_{i}^{\:\left(t\right)}\right)\bigl(\sum_{j=1}^{\nu}y_{j}^{\:\left(t\right)}\bigr)}\\
\Bigl(\mbox{resp. }y_{k}^{\:\left(t+1\right)} & = & \frac{\sum_{i,j=1}^{n,\nu}\widetilde{\gamma}_{ijr}x_{i}^{\:\left(t\right)}y_{j}^{\:\left(t\right)}}{\left(\sum_{i=1}^{n}x_{i}^{\:\left(t\right)}\right)\bigl(\sum_{j=1}^{\nu}y_{j}^{\:\left(t\right)}\bigr)}\Bigr).
\end{eqnarray}
Consider $(n+\nu-1)-$dimensional simplex
$$S^{n+\nu-1}=\left\{(x_1, \dots, x_n; y_1, \dots, y_\nu)\in \mathbb R^{n+\nu}: x_i\geq 0, \, y_j\geq 0, \, \sum_{i=1}^{n}x_{i}+\sum_{j=1}^{\nu}y_{j}=1\right\}.$$
Then equations \eqref{eq:Frequencies} is a discrete-time dynamical system generated by the evolution operator $W: S^{n+\nu-1} \to S^{n+\nu-1}$ defined as (see \cite{Roz-Va})
\begin{equation}\label{oW}
W:	\begin{array}{ll}
	x_{k}' =  \frac{\sum_{i,j=1}^{n,\nu}\gamma_{ijk}x_{i}y_{j}}{\left(\sum_{i=1}^{n}x_{i}\right)\bigl(\sum_{j=1}^{\nu}y_{j}\bigr)}\\[4mm]
	y_{k}'= \frac{\sum_{i,j=1}^{n,\nu}\widetilde{\gamma}_{ijr}x_{i}
		y_{j}}{\left(\sum_{i=1}^{n}x_{i}\right)\bigl(\sum_{j=1}^{\nu}y_{j}\bigr)}.
	\end{array}
\end{equation}

\medskip{}

\section{Definition and basic properties of gonosomal algebras}

There are several algebraic models to study the inheritance of
gonosomal genes. The first was proposed by Etherington \cite{Ether-41}
for a gonosomal diallelic gene in the $XY$-system, it was extended
to diallelic case with mutation in \cite{Gonsh-60}, to multiallelic
case in \cite{Gonsh-65,WB-74,WB-75}. The second model is due
to Gonshor \cite{Gonsh-73} by introducing the concept of sex-linked
duplication. In \cite{LR-10} the authors introduced a more general
definition: the evolution algebras of a bisexual population (\emph{EABP}).
In \cite{Varro} we show that several genetic situations are
not representable by \emph{EABP} what leads to put the following
definition.
\begin{defn}
\label{def:Gonosomal-Alg} Given a commutative field $K$ with
characteristic $\neq2$, a $K$-algebra $A$ is gonosomal of
type $\left(n,\nu\right)$ if it admits a basis $\left(e_{i}\right)_{1\text{\ensuremath{\le}}i\text{\ensuremath{\le}}n}\cup\left(\widetilde{e}_{j}\right)_{1\text{\ensuremath{\le}}j\text{\ensuremath{\le}}\nu}$
such that for all $1\leq i,j\leq n$ and $1\leq p,q\leq\nu$
we have:
\begin{eqnarray*}
e_{i}e_{j} & = & 0,\\
\widetilde{e}_{p}\widetilde{e}_{q} & = & 0,\\
e_{i}\widetilde{e}_{p}\;=\;\widetilde{e}_{p}e_{i} & = & \sum_{k=1}^{n}\gamma_{ipk}e_{k}+\sum_{r=1}^{\nu}\widetilde{\gamma}_{ipr}\widetilde{e}_{r},
\end{eqnarray*}
where $\sum_{k=1}^{n}\gamma_{ipk}+\sum_{r=1}^{\nu}\widetilde{\gamma}_{ipr}=1$.
The basis $\left(e_{i}\right)_{1\text{\ensuremath{\le}}i\text{\ensuremath{\le}}n}\cup\left(\widetilde{e}_{j}\right)_{1\text{\ensuremath{\le}}j\text{\ensuremath{\le}}\nu}$
is called a gonosomal basis of $A$.\end{defn}
\begin{rk}
For now, we do not need to assume that the structure constants
$\gamma_{ipk}$, $\widetilde{\gamma}_{ipr}$ are non-negative.
\end{rk}
It was shown in \cite{Varro} that gonosomal algebras can represent
algebraically all sex determination systems ($XY$, $WZ$, $X0$,
$Z0$ and $WXY$) and a wide variety of genetic phenomena related
to sex as{\footnotesize{}:}\emph{\footnotesize{} }temperature-dependent
sex determination, sequential hermaphrodism, androgenesis, parthenogenesis,
gynogenesis, bacterial conjugation, cytoplasmic inheritance,
sex-linked lethal genes, multiple sex chromosome systems, heredity
in the $WXY$-system, heredity in the $WZ$-system with male
feminization, $XY$-system with fertile $XY$-females, $X$-linked
sex-ratio distorter, kleptogenesis,\emph{ }genetic processes
(mutation, recombination, transposition) influenced by sex, heredity
in ciliates, genomic imprinting, $X$-inactivation, sex determination
by gonosome elimination, sexual reproduction in triploid, polygenic
sex determination, cytoplasmic heredity.\medskip{}

The gonosomal basis on a gonosomal algebra may be not unique
as as shown by the following proposition.
\begin{pro}
Let $A$ be a gonosomal algebra with gonosomal basis $\left(e_{i}\right)_{1\text{\ensuremath{\le}}i\text{\ensuremath{\le}}n}\cup\left(\widetilde{e}_{p}\right)_{1\text{\ensuremath{\le}}p\le\nu}$.
Then any basis $\left(a_{i}\right)_{1\text{\ensuremath{\le}}i\text{\ensuremath{\le}}n}\cup\left(\widetilde{a}_{p}\right)_{1\text{\ensuremath{\le}}p\le\nu}$
with
\[
a_{i}=\sum_{j=1}^{n}\alpha_{ji}e_{j}\mbox{ and }\widetilde{a}_{p}=\sum_{q=1}^{\nu}\widetilde{\alpha}_{qp}\widetilde{e}_{p}
\]
where $\sum_{j=1}^{n}\alpha_{ji}=\sum_{q=1}^{\nu}\widetilde{\alpha}_{qp}=1$
for all $1\leq i\leq n,1\leq p\leq\nu$, is a gonosomal basis
of $A$.\end{pro}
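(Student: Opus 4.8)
The plan is to verify directly that the new basis $\left(a_{i}\right)_{1\le i\le n}\cup\left(\widetilde{a}_{p}\right)_{1\le p\le\nu}$ satisfies the three defining relations of a gonosomal basis from Definition~\ref{def:Gonosomal-Alg}. The first two relations are immediate: since each $a_{i}$ is a linear combination of the $e_{j}$'s and $e_{j}e_{l}=0$ for all $j,l$, bilinearity of the product gives $a_{i}a_{j}=\sum_{k,l}\alpha_{ki}\alpha_{lj}e_{k}e_{l}=0$, and likewise $\widetilde{a}_{p}\widetilde{a}_{q}=0$. (One should also note that the $(a_i)\cup(\widetilde a_p)$ is genuinely a basis: the change-of-basis matrix is block diagonal with blocks $(\alpha_{ji})$ and $(\widetilde\alpha_{qp})$, so the hypothesis implicitly requires these to be invertible; the column-sum condition alone does not force invertibility, so I would either add that assumption or remark that we only need these to be generating/linearly independent systems for the statement to make sense.)

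Next I would compute the mixed product $a_{i}\widetilde{a}_{p}$. Expanding by bilinearity,
\[
a_{i}\widetilde{a}_{p}=\sum_{j=1}^{n}\sum_{q=1}^{\nu}\alpha_{ji}\,\widetilde{\alpha}_{qp}\,(e_{j}\widetilde{e}_{q})=\sum_{j=1}^{n}\sum_{q=1}^{\nu}\alpha_{ji}\,\widetilde{\alpha}_{qp}\Bigl(\sum_{k=1}^{n}\gamma_{jqk}e_{k}+\sum_{r=1}^{\nu}\widetilde{\gamma}_{jqr}\widetilde{e}_{r}\Bigr).
\]
Commutativity $\widetilde{a}_{p}a_{i}=a_{i}\widetilde{a}_{p}$ follows from $\widetilde{e}_{q}e_{j}=e_{j}\widetilde{e}_{q}$. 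So this product is indeed of the form $\sum_{k}\gamma'_{ipk}e_{k}+\sum_{r}\widetilde{\gamma}'_{ipr}\widetilde{e}_{r}$ with new structure constants $\gamma'_{ipk}=\sum_{j,q}\alpha_{ji}\widetilde{\alpha}_{qp}\gamma_{jqk}$ and $\widetilde{\gamma}'_{ipr}=\sum_{j,q}\alpha_{ji}\widetilde{\alpha}_{qp}\widetilde{\gamma}_{jqr}$.

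The only remaining point — and the one requiring the stochasticity hypothesis $\sum_{j}\alpha_{ji}=\sum_{q}\widetilde{\alpha}_{qp}=1$ — is to check the normalization $\sum_{k=1}^{n}\gamma'_{ipk}+\sum_{r=1}^{\nu}\widetilde{\gamma}'_{ipr}=1$. Summing the new constants and interchanging the order of summation gives
\[
\sum_{k=1}^{n}\gamma'_{ipk}+\sum_{r=1}^{\nu}\widetilde{\gamma}'_{ipr}=\sum_{j=1}^{n}\sum_{q=1}^{\nu}\alpha_{ji}\,\widetilde{\alpha}_{qp}\Bigl(\sum_{k=1}^{n}\gamma_{jqk}+\sum_{r=1}^{\nu}\widetilde{\gamma}_{jqr}\Bigr)=\sum_{j=1}^{n}\sum_{q=1}^{\nu}\alpha_{ji}\,\widetilde{\alpha}_{qp}\cdot 1=\Bigl(\sum_{j=1}^{n}\alpha_{ji}\Bigr)\Bigl(\sum_{q=1}^{\nu}\widetilde{\alpha}_{qp}\Bigr)=1,
\]
where the inner parenthesized sum equals $1$ because $(e_{i})\cup(\widetilde{e}_{p})$ is a gonosomal basis. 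Thus all defining conditions hold and $\left(a_{i}\right)\cup\left(\widetilde{a}_{p}\right)$ is a gonosomal basis. There is no real obstacle here; the computation is routine bilinear bookkeeping, and the only subtlety worth flagging in the write-up is the invertibility of the change-of-basis matrices, which is needed for "$\left(a_{i}\right)\cup\left(\widetilde{a}_{p}\right)$ is a basis" but is separate from the gonosomal-relations verification.
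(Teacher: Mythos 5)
Your proof is correct and follows essentially the same route as the paper: expand $a_{i}\widetilde{a}_{p}$ by bilinearity, read off the new structure constants, and check the normalization by swapping the order of summation so that it factors as $\bigl(\sum_{j}\alpha_{ji}\bigr)\bigl(\sum_{q}\widetilde{\alpha}_{qp}\bigr)=1$. Your side remark about invertibility of the change-of-basis matrices is a reasonable observation, but since the statement hypothesizes that the $\left(a_{i}\right)\cup\left(\widetilde{a}_{p}\right)$ already form a basis, it does not affect the argument.
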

\begin{proof}
Let $\left(a_{i}\right)_{1\text{\ensuremath{\le}}i\text{\ensuremath{\le}}n}\cup\left(\widetilde{a}_{p}\right)_{1\text{\ensuremath{\le}}p\le\nu}$
be a basis of the assumed form. It is immediate that $a_{i}a_{j}=\widetilde{a}_{p}\widetilde{a}_{q}=0$.
Next by an easy calculation we get
\begin{eqnarray*}
a_{i}\widetilde{a}_{p} & = & \sum_{k=1}^{n}\bigl(\sum_{j,q=1}^{n,\nu}\alpha_{ji}\widetilde{\alpha}_{qp}\gamma_{jqk}\bigr)e_{k}+\sum_{r=1}^{\nu}\bigl(\sum_{j,q=1}^{n,\nu}\alpha_{ji}\widetilde{\alpha}_{qp}\widetilde{\gamma}_{jqr}\bigr)\widetilde{e}_{r}
\end{eqnarray*}
where
\begin{eqnarray*}
\sum_{k=1}^{n}\bigl(\sum_{j,q=1}^{n,\nu}\alpha_{ji}\widetilde{\alpha}_{qp}\gamma_{jqk}\bigr)+\sum_{r=1}^{\nu}\bigl(\sum_{j,q=1}^{n,\nu}\alpha_{ji}\widetilde{\alpha}_{qp}\widetilde{\gamma}_{jqr}\bigr) & = & \sum_{j,q=1}^{n,\nu}\alpha_{ji}\widetilde{\alpha}_{qp}\bigl(\sum_{k=1}^{n}\gamma_{jqk}+\sum_{r=1}^{\nu}\widetilde{\gamma}_{jqr}\bigr)\\
 & = & \bigl(\sum_{j=1}^{n}\alpha_{ji}\bigr)\bigl(\sum_{q=1}^{\nu}\widetilde{\alpha}_{qp}\bigr)=1,
\end{eqnarray*}
which establishes that the basis $\left(a_{i}\right)_{1\text{\ensuremath{\le}}i\text{\ensuremath{\le}}n}\cup\left(\widetilde{a}_{p}\right)_{1\text{\ensuremath{\le}}p\le\nu}$
is gonosomal.\end{proof}
\begin{pro}\label{prop:Iso-gonosomal-algebra}Any gonosomal algebra of type
$\left(n,\nu\right)$ is isomorphic to a gonosomal algebra of
type $\left(\nu,n\right)$.\end{pro}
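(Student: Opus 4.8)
The plan is to observe that passing from type $\left(n,\nu\right)$ to type $\left(\nu,n\right)$ amounts to nothing more than interchanging the roles played by the two families of basis vectors, so the required isomorphism can be taken to be the identity map of $A$ equipped with a relabelled gonosomal basis. Concretely, let $A$ be gonosomal of type $\left(n,\nu\right)$ with gonosomal basis $\left(e_{i}\right)_{1\le i\le n}\cup\left(\widetilde{e}_{p}\right)_{1\le p\le\nu}$ and structure constants $\gamma_{ipk},\widetilde{\gamma}_{ipr}$ as in Definition~\ref{def:Gonosomal-Alg}. First I would introduce the new indexed family obtained by swapping the two blocks: put $f_{p}:=\widetilde{e}_{p}$ for $1\le p\le\nu$ and $\widetilde{f}_{i}:=e_{i}$ for $1\le i\le n$. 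Since this is literally the same set of vectors, $\left(f_{p}\right)_{1\le p\le\nu}\cup\left(\widetilde{f}_{i}\right)_{1\le i\le n}$ is again a basis of $A$, now with $\nu$ vectors in the first block and $n$ in the second.

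Next I would verify that this relabelled basis satisfies the three defining relations of a gonosomal algebra of type $\left(\nu,n\right)$. The products inside each block vanish, since $f_{p}f_{q}=\widetilde{e}_{p}\widetilde{e}_{q}=0$ and $\widetilde{f}_{i}\widetilde{f}_{j}=e_{i}e_{j}=0$. For the mixed products, using the commutativity $\widetilde{e}_{p}e_{i}=e_{i}\widetilde{e}_{p}$ that is part of Definition~\ref{def:Gonosomal-Alg}, one gets
\[
f_{p}\widetilde{f}_{i}\;=\;\widetilde{e}_{p}e_{i}\;=\;e_{i}\widetilde{e}_{p}\;=\;\sum_{r=1}^{\nu}\widetilde{\gamma}_{ipr}\,f_{r}+\sum_{k=1}^{n}\gamma_{ipk}\,\widetilde{f}_{k}.
\]
Hence, setting $\gamma'_{pir}:=\widetilde{\gamma}_{ipr}$ and $\widetilde{\gamma}'_{pik}:=\gamma_{ipk}$, the new structure constants obey the normalization $\sum_{r=1}^{\nu}\gamma'_{pir}+\sum_{k=1}^{n}\widetilde{\gamma}'_{pik}=\sum_{r=1}^{\nu}\widetilde{\gamma}_{ipr}+\sum_{k=1}^{n}\gamma_{ipk}=1$, which holds by the corresponding identity for $A$. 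Thus $A$, endowed with the basis $\left(f_{p}\right)_{p}\cup\left(\widetilde{f}_{i}\right)_{i}$, is a gonosomal algebra of type $\left(\nu,n\right)$, and the identity map of $A$ realises the isomorphism asserted in the statement.

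I expect no genuine obstacle here: the argument is a bookkeeping exercise in which the only point worth flagging is that the symmetry $e_{i}\widetilde{e}_{p}=\widetilde{e}_{p}e_{i}$ is actually used, since without it the interchanged mixed products would not be expressible in the form prescribed by Definition~\ref{def:Gonosomal-Alg}. If one prefers an abstract model instead of the identity map, one can equivalently define $B$ on a vector space with basis $\left(f_{p}\right)_{1\le p\le\nu}\cup\left(\widetilde{f}_{i}\right)_{1\le i\le n}$ and structure constants $\gamma'_{pir},\widetilde{\gamma}'_{pik}$ as above, and check that the linear extension of $e_{i}\mapsto\widetilde{f}_{i}$, $\widetilde{e}_{p}\mapsto f_{p}$ is an algebra isomorphism $A\to B$; the verification is the same computation.
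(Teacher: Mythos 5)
Your proposal is correct and follows essentially the same route as the paper: the paper builds the "opposite" algebra $A^{o}$ with the two blocks of basis vectors and the structure constants $\gamma_{ipk},\widetilde{\gamma}_{ipr}$ interchanged and maps $e_{i}\mapsto\widetilde{a}_{i}$, $\widetilde{e}_{p}\mapsto a_{p}$, which is exactly the abstract variant you sketch at the end, while your primary formulation (relabel the same basis of $A$ and use the identity map) is just a minimal repackaging of that idea. Your explicit check of the swapped structure constants, the normalization condition, and the use of commutativity $e_{i}\widetilde{e}_{p}=\widetilde{e}_{p}e_{i}$ is sound, so no gap remains.
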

\begin{proof}
Let $A$ be a gonosomal algebra with basis $\left(e_{i}\right)_{1\text{\ensuremath{\le}}i\text{\ensuremath{\le}}n}\cup\left(\widetilde{e}_{p}\right)_{1\text{\ensuremath{\le}}p\le\nu}$
verifying $e_{i}\widetilde{e}_{p}=\sum_{k=1}^{n}\gamma_{ipk}e_{k}+\sum_{r=1}^{\nu}\widetilde{\gamma}_{ipr}\widetilde{e}_{r}$.
We consider the algebra $A^{o}$ with baseis $\left(a_{i}\right)_{1\leq i\leq\nu}\cup\left(\widetilde{a}_{p}\right)_{1\leq p\leq n}$
defined by $a_{i}\widetilde{a}_{p}=\sum_{k=1}^{\nu}\widetilde{\gamma}_{pik}a_{k}+\sum_{r=1}^{n}\gamma_{pir}\widetilde{a}_{r}$
then the mapping $\varphi:A\rightarrow A^{o}$ defined by $e_{i}\mapsto\widetilde{a}_{i}$
and $\widetilde{e}_{p}\mapsto a_{p}$ is an algebra-isomorphism.\end{proof}
\begin{pro}
Let $A$ be a gonosomal algebra of type $\left(n,\nu\right)$,
if $A'$ is an algebra isomorphic to $A$ then $A'$ is gonosomal
of type $\left(n,\nu\right)$ or $\left(\nu,n\right)$.\end{pro}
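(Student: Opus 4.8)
The plan is to argue by transport of structure along an algebra isomorphism, and then to read off the second alternative from Proposition \ref{prop:Iso-gonosomal-algebra}. So let $\varphi\colon A\to A'$ be an algebra isomorphism, fix a gonosomal basis $(e_{i})_{1\le i\le n}\cup(\widetilde{e}_{p})_{1\le p\le\nu}$ of $A$ with structure constants $\gamma_{ipk},\widetilde{\gamma}_{ipr}$, and set $a_{i}:=\varphi(e_{i})$ and $\widetilde{a}_{p}:=\varphi(\widetilde{e}_{p})$. Since $\varphi$ is a linear isomorphism, $(a_{i})_{1\le i\le n}\cup(\widetilde{a}_{p})_{1\le p\le\nu}$ is a basis of $A'$, and it remains only to check that this basis is gonosomal.

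For that step I would simply use that $\varphi$ is multiplicative: $a_{i}a_{j}=\varphi(e_{i}e_{j})=0$ and $\widetilde{a}_{p}\widetilde{a}_{q}=\varphi(\widetilde{e}_{p}\widetilde{e}_{q})=0$, while
\[
a_{i}\widetilde{a}_{p}=\varphi(e_{i}\widetilde{e}_{p})=\sum_{k=1}^{n}\gamma_{ipk}\varphi(e_{k})+\sum_{r=1}^{\nu}\widetilde{\gamma}_{ipr}\varphi(\widetilde{e}_{r})=\sum_{k=1}^{n}\gamma_{ipk}a_{k}+\sum_{r=1}^{\nu}\widetilde{\gamma}_{ipr}\widetilde{a}_{r},
\]
and symmetrically $\widetilde{a}_{p}a_{i}=\varphi(\widetilde{e}_{p}e_{i})=a_{i}\widetilde{a}_{p}$. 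The structure constants are literally those of $A$, so $\sum_{k=1}^{n}\gamma_{ipk}+\sum_{r=1}^{\nu}\widetilde{\gamma}_{ipr}=1$ continues to hold. Hence $(a_{i})\cup(\widetilde{a}_{p})$ is a gonosomal basis of $A'$ and $A'$ is gonosomal of type $(n,\nu)$. Applying Proposition \ref{prop:Iso-gonosomal-algebra} to $A'$ then shows that $A'$ may equally be presented as a gonosomal algebra of type $(\nu,n)$, which accounts for the second possibility in the statement.

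I do not expect a genuine obstacle here: the argument is pure transport of structure, and the only point not to overlook is that the three defining identities and the normalization condition pass verbatim to the image basis, because an algebra homomorphism commutes with products and with linear combinations. The only place where real work could be hidden is if one reads the statement as also asserting that $(n,\nu)$ and $(\nu,n)$ are the \emph{only} types $A'$ can carry; that would require showing the unordered pair $\{n,\nu\}$ is an isomorphism invariant of a gonosomal algebra — e.g.\ by describing it intrinsically via the dimensions of canonical subspaces such as $A^{2}$, the two null subspaces attached to a gonosomal basis, or maximal null subalgebras — and that is the part I would expect to be delicate.
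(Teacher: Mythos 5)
Your proposal is correct and is essentially the paper's own proof: transport the gonosomal basis of $A$ through the isomorphism $\varphi$, check that the defining products and the normalization condition carry over verbatim, and then invoke Proposition \ref{prop:Iso-gonosomal-algebra} for the $\left(\nu,n\right)$ alternative. The paper does not address the uniqueness question you raise at the end either, so no extra work is expected there.
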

\begin{proof}
Let $A$ be a gonosomal algebra with basis $\left(e_{i}\right)_{1\text{\ensuremath{\le}}i\text{\ensuremath{\le}}n}\cup\left(\widetilde{e}_{p}\right)_{1\text{\ensuremath{\le}}p\le\nu}$
and $\varphi:A\rightarrow A'$ an algebra-isomorphism, we put
$a_{i}=\varphi\left(e_{i}\right)$ and $b_{p}=\varphi\left(\widetilde{e}_{p}\right)$,
we get $a_{i}a_{j}=\varphi\left(e_{i}e_{j}\right)=0$, $b_{p}b_{q}=\varphi\left(\widetilde{e}_{p}\widetilde{e}_{q}\right)=0$
and $a_{i}b_{p}=\sum_{k=1}^{n}\gamma_{ipk}a_{k}+\sum_{r=1}^{\nu}\widetilde{\gamma}_{ipr}b_{r}$,
therefore the algebra $A'$ is gonosomal for the basis $\left(a_{i}\right)_{1\text{\ensuremath{\le}}i\text{\ensuremath{\le}}n}\cup\left(b_{p}\right)_{1\text{\ensuremath{\le}}p\le\nu}$
and proposition \ref{prop:Iso-gonosomal-algebra} gives that
it can be $\left(\nu,n\right)$ type.
\end{proof}
\medskip{}

In the literature (cf. \cite{Schafer}) an algebra is referred
to as a nonassociative algebra in order to emphasize that the
associativity relation $x\left(yz\right)=\left(xy\right)z\;\left(\star\right)$
is not assumed to hold. If relation $\left(\star\right)$ is
not satisfied in an algebra, we say that this algebra is not
associative. The best-known nonassociative algebras are:
\begin{itemize}
\item Lie algebras, that is $xy+yx=0$ and $\left(xy\right)z+\left(yz\right)x+\left(zx\right)y=0$
(Jacobi identity).
\item Flexible algebras if $x\left(yx\right)=\left(xy\right)x$.
\item Alternative algebras if $x^{2}y=x\left(xy\right)$ and $yx^{2}=\left(yx\right)x$.
\item Jordan algebras if $xy=yx$ and $x^{2}\left(xy\right)=x\left(x^{2}y\right)$
(Jordan identity).
\item Power associative algebras if the subalgebra generated by any
element $x$ is associative, this is equivalent to defining $x^{1}=x$
and $x^{i+1}=xx^{i}$ and requiring $x^{i+j}=x^{i}x^{j}$ for
$i,j=1,2,\ldots$ and any $x$.
\end{itemize}
It is known that
\begin{itemize}
\item commutative algebras are flexible;
\item associative algebras are flexible, alternative, power associative
and verify the Jordan identity;
\item commutative alternative algebras are Jordan algebras;
\item Jordan algebras are power associative.
\end{itemize}
In \cite{Varro} an example of gonosomal algebra is given which
is not associative, or Lie, or alternative, or power associative,
nor Jordan. In what follows we will clarify this by showing that
gonosomal algebras constitute a new class disjoint of other nonassociative algebras.
\begin{thm}
Any gonosomal algebra is not associative, not Lie, not power
associative, not Jordan, not alternative.
\end{thm}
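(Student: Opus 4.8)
The plan is to begin by recording three elementary structural facts about a gonosomal algebra $A$ with gonosomal basis $(e_i)_{1\le i\le n}\cup(\widetilde e_p)_{1\le p\le\nu}$, where we assume as usual $n,\nu\ge1$ (the degenerate case $n\nu=0$ yields a zero-multiplication algebra, for which the statement is false and which must be excluded). First, $A$ is commutative, since the multiplication table is symmetric. Second, writing $E=\operatorname{span}(e_i)$ and $\widetilde E=\operatorname{span}(\widetilde e_p)$, we have $A=E\oplus\widetilde E$ and $E\cdot E=\widetilde E\cdot\widetilde E=0$. Third — and this is the workhorse — the linear form $\sigma\colon A\to K$ given by $\sigma\bigl(\sum_i\xi_ie_i+\sum_p\eta_p\widetilde e_p\bigr)=\sum_i\xi_i+\sum_p\eta_p$ satisfies $\sigma(e_i\widetilde e_p)=1$ for all $i,p$ (this is precisely the normalization $\sum_k\gamma_{ipk}+\sum_r\widetilde\gamma_{ipr}=1$), and therefore, for every $z=\sum_k\xi_ke_k+\sum_r\eta_r\widetilde e_r$, we get $\sigma(e_iz)=\sum_r\eta_r=:\sigma_{\widetilde E}(z)$ and $\sigma(\widetilde e_pz)=\sum_k\xi_k=:\sigma_E(z)$, because $e_iz=\sum_r\eta_r(e_i\widetilde e_r)$ and $\widetilde e_pz=\sum_k\xi_k(e_k\widetilde e_p)$. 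Thus $\sigma$ converts algebraic identities into scalar relations among the partial sums $\alpha_{ip}:=\sum_k\gamma_{ipk}$ and $\beta_{ip}:=\sum_r\widetilde\gamma_{ipr}$ (which satisfy $\alpha_{ip}+\beta_{ip}=1$), without any need to track individual structure constants.

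Given commutativity together with the implications recalled in the excerpt (associative $\Rightarrow$ power associative; Jordan $\Rightarrow$ power associative; commutative alternative $\Rightarrow$ Jordan $\Rightarrow$ power associative), it then suffices to establish two things: $A$ is not Lie, and $A$ is not power associative. For the Lie case, a commutative Lie algebra is anticommutative, so $2\,e_1\widetilde e_1=e_1\widetilde e_1+\widetilde e_1e_1=0$; since $2\neq0$ in $K$ this forces $e_1\widetilde e_1=0$, contradicting $\sigma(e_1\widetilde e_1)=1$. (If one prefers self-contained arguments for the remaining properties: associativity gives $(e_1\widetilde e_1)\widetilde e_1=e_1(\widetilde e_1\widetilde e_1)=0$ and $(\widetilde e_1e_1)e_1=\widetilde e_1(e_1e_1)=0$, and applying $\sigma$ via the third fact yields $\alpha_{11}=0$ and $\beta_{11}=0$, contradicting $\alpha_{11}+\beta_{11}=1$; alternativity is handled identically using $x^2y=x(xy)$ with $x=e_1$, then with $x=\widetilde e_1$, and $y$ the other generator.)

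The substantive part is non-power-associativity, and the plan is to test the identity $x^2x^2=x^4$ (with $x^3=xx^2$, $x^4=xx^3$) on the one-parameter family $x_\lambda=e_1+\lambda\widetilde e_1$, $\lambda\in K$. Writing $w:=e_1\widetilde e_1=A+\widetilde B$ with $A\in E$, $\widetilde B\in\widetilde E$, the second structural fact gives $x_\lambda^2=2\lambda w$ and $w^2=2A\widetilde B$, so $(x_\lambda^2)^2=8\lambda^2A\widetilde B$ and $\sigma\bigl((x_\lambda^2)^2\bigr)=8\lambda^2\alpha_{11}\beta_{11}$. On the other side, expanding $x_\lambda^4=x_\lambda\bigl(x_\lambda(x_\lambda x_\lambda)\bigr)$, splitting each intermediate product into its $E$- and $\widetilde E$-parts, and applying the third structural fact to every piece, one is led — after the telescoping simplifications $\sum_r\widetilde\gamma_{11r}(\alpha_{1r}+\beta_{1r})=\beta_{11}$ and $\sum_k\gamma_{11k}(\alpha_{k1}+\beta_{k1})=\alpha_{11}$ — to the clean form
\[\sigma\bigl(x_\lambda^4\bigr)=2\lambda S_1+2\lambda^2(1-S_1-S_4)+2\lambda^3 S_4,\qquad S_1:=\sum_r\widetilde\gamma_{11r}\beta_{1r},\quad S_4:=\sum_k\gamma_{11k}\alpha_{k1}.\]
Then I would run three evaluations. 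Taking $\lambda=1$ in the identity, and repeating the same computation verbatim with $(1,1)$ replaced by an arbitrary pair $(i,p)$, gives $8\alpha_{ip}\beta_{ip}=2$; combined with $\alpha_{ip}+\beta_{ip}=1$ and $2\neq0$ in $K$, this forces $\alpha_{ip}=\beta_{ip}=\tfrac12$ for all $i,p$. Substituting $\beta_{1r}=\tfrac12$ and $\alpha_{k1}=\tfrac12$ back into the formulas for $S_1,S_4$ yields $S_1=S_4=\tfrac14$. Finally the $\lambda=2$ instance of the identity reduces to $8=9$ in $K$, which is impossible. Hence $A$ is not power associative, and by the implications above it is also not associative, not Jordan and not alternative; combined with the Lie case, the theorem follows.

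The only place where genuine computation is unavoidable — and hence the main obstacle — is the expansion of $\sigma(x_\lambda^4)$: one has to split four intermediate products into their $E$/$\widetilde E$-components and invoke $\sigma(e_iz)=\sigma_{\widetilde E}(z)$, $\sigma(\widetilde e_pz)=\sigma_E(z)$ each time, and the resulting bookkeeping is exactly what produces the telescoping identities and the cubic-in-$\lambda$ shape displayed above. Once $\sigma(x_\lambda^4)$ is in that form, the rest is a two-line evaluation. A minor point to state explicitly at the outset is the exclusion of the trivial type $n\nu=0$, since a zero-multiplication algebra satisfies all of the listed identities.
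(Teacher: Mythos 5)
Your proof is correct --- I checked the key identity $\sigma\bigl(x_\lambda^4\bigr)=2\lambda S_1+2\lambda^2\left(1-S_1-S_4\right)+2\lambda^3S_4$, the companion formula $\sigma\bigl((x_\lambda^2)^2\bigr)=8\lambda^2\alpha_{11}\beta_{11}$, and the evaluations at $\lambda=1$ and $\lambda=2$ --- and it shares the paper's overall skeleton: both arguments push the relation $x^2x^2=x^4$ through the augmentation form (the paper's $\varpi$, your $\sigma$) on elements of the form $e_i+\widetilde e_p$, both first extract $\sum_k\gamma_{ipk}=\sum_r\widetilde\gamma_{ipr}=\tfrac12$ for all $i,p$, both dispose of the Lie case by playing commutativity against anticommutativity in characteristic $\neq2$, and both ultimately reduce Jordan and alternative to power associativity via the standard implications for commutative algebras listed before the theorem. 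Where you genuinely diverge is the final contradiction: the paper linearizes $x^2x^2=x^4$ to $4x^2\left(xy\right)=x^3y+x\left(x^2y\right)+2x\left(x\left(xy\right)\right)$ (citing Schafer), applies it with $x=e_i$, $y=\widetilde e_p$ to obtain $e_i\left(e_i\left(e_i\widetilde e_p\right)\right)=0$, and contradicts the $\tfrac12$-relations, whereas you simply re-evaluate the same quartic identity at a second member of your one-parameter family, $\lambda=2$, obtaining $8=9$ in $K$. Your variant is more self-contained: it needs no linearization lemma (hence no external citation and no care about extracting polynomial coefficients over small fields), uses only the elements $1,2\in K$, and works verbatim in every characteristic $\neq2$. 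What the paper's route buys is a direct, standalone proof of non-associativity as its first step, which you recover only through the implication chain --- though your parenthetical $\sigma$-arguments for associativity and alternativity (giving $\alpha_{11}=\beta_{11}=0$ against $\alpha_{11}+\beta_{11}=1$) supply the same thing in two lines. Your remark about excluding the degenerate type with $n\nu=0$ is a sensible reading of the definition rather than a discrepancy with the paper.
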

\begin{proof}
Let $A$ be a gonosomal algebra with basis $\left(e_{i}\right)_{1\text{\ensuremath{\le}}i\text{\ensuremath{\le}}n}\cup\left(\widetilde{e}_{j}\right)_{1\text{\ensuremath{\le}}j\text{\ensuremath{\le}}\nu}$.
For any $1\leq i,j\leq n$ and $1\leq p,q\leq\nu$ we have:
\begin{eqnarray}
e_{i}\left(e_{j}\widetilde{e}_{p}\right) & = & \sum_{k=1}^{n}\Bigl(\sum_{r=1}^{\nu}\gamma_{irk}\widetilde{\gamma}_{jpr}\Bigr)e_{k}+\sum_{s=1}^{\nu}\Bigl(\sum_{r=1}^{\nu}\widetilde{\gamma}_{irs}\widetilde{\gamma}_{jpr}\Bigr)\widetilde{e}_{s}\label{eq:ei(ejep)}\\
\left(e_{i}\widetilde{e}_{p}\right)\widetilde{e}_{q} & = & \sum_{k=1}^{n}\Bigl(\sum_{l=1}^{n}\gamma_{ipl}\gamma_{lqk}\Bigr)e_{k}+\sum_{r=1}^{\nu}\Bigl(\sum_{l=1}^{n}\gamma_{ipl}\widetilde{\gamma}_{lqr}\Bigr)\widetilde{e}_{r}.\label{eq:(eiep)eq}
\end{eqnarray}
Assuming that $A$ is associative, from $e_{i}\left(e_{j}\widetilde{e}_{p}\right)=\left(e_{i}e_{j}\right)\widetilde{e}_{p}=0$
and (\ref{eq:ei(ejep)}) we infer that
\[
\sum_{r=1}^{\nu}\gamma_{irk}\widetilde{\gamma}_{jpr}=\sum_{r=1}^{\nu}\widetilde{\gamma}_{irs}\widetilde{\gamma}_{jpr}=0,\quad\left(1\leq i,j,k\leq n,1\leq p,s\leq\nu\right)
\]
but we have
\[
\sum_{k,r=1}^{n,\nu}\gamma_{irk}\widetilde{\gamma}_{jpr}+\sum_{s,r=1}^{\nu}\widetilde{\gamma}_{irs}\widetilde{\gamma}_{jpr}=\sum_{r=1}^{\nu}\Bigl(\sum_{k=1}^{n}\gamma_{irk}+\sum_{s=1}^{\nu}\widetilde{\gamma}_{irs}\Bigr)\widetilde{\gamma}_{jpr}=\sum_{r=1}^{\nu}\widetilde{\gamma}_{jpr}
\]
and thus
\begin{equation}
\sum_{r=1}^{\nu}\widetilde{\gamma}_{jpr}=0,\quad\left(1\leq j\leq n,1\leq p\leq\nu\right).\label{eq:Sum1}
\end{equation}
Similarly, with $\left(e_{i}\widetilde{e}_{p}\right)\widetilde{e}_{q}=e_{i}\left(\widetilde{e}_{p}\widetilde{e}_{q}\right)=0$
and (\ref{eq:(eiep)eq}) we get
\[
\sum_{l=1}^{n}\gamma_{ipl}\gamma_{lqk}=\sum_{l=1}^{n}\gamma_{ipl}\widetilde{\gamma}_{lqr}=0,\quad\left(1\leq i,k\leq n,1\leq p,q,r\leq\nu\right),
\]
from which it follows that
\begin{eqnarray*}
\sum_{k,l=1}^{n}\gamma_{ipl}\gamma_{lqk}+\sum_{l,r=1}^{n,\nu}\gamma_{ipl}\widetilde{\gamma}_{lqr} & = & \sum_{l=1}^{n}\gamma_{ipl}\Bigl(\sum_{k=1}^{n}\gamma_{lqk}+\sum_{r=1}^{\nu}\widetilde{\gamma}_{lqr}\Bigr)=\sum_{l=1}^{n}\gamma_{ipl}
\end{eqnarray*}
thus
\begin{equation}
\sum_{l=1}^{n}\gamma_{ipl}=0\quad\left(1\leq i\leq n,1\leq p\leq\nu\right).\label{eq:Sum2}
\end{equation}
From relations (\ref{eq:Sum1}) and (\ref{eq:Sum2}) we get that
$\sum_{l=1}^{n}\gamma_{ipl}+\sum_{r=1}^{\nu}\widetilde{\gamma}_{ipr}=0$
for all $1\leq i\leq n,1\leq p\leq\nu$, hence a contradiction.\smallskip{}

Algebra $A$ is not a Lie algebra because if $A$ is both commutative
and anticommutative we have $xy=0$ for any $x,y\in A$, in other
words $A$ is a zero-algebra.\smallskip{}

If $A$ is a power associative algebra it verifies $x^{2}x^{2}=x^{4}$
for all $x\in A$. Let $x=e_{i}+\widetilde{e}_{p}$ where $1\leq i\leq n,1\leq p\leq\nu$,
we have:
\[
x^{2}=2\sum_{k=1}^{n}\gamma_{ipk}e_{k}+2\sum_{r=1}^{\nu}\widetilde{\gamma}_{ipr}\widetilde{e}_{r}.
\]
It follows that
\[
x^{2}x^{2}=8\sum_{l=1}^{n}\Bigl(\sum_{k,r=1}^{n,\nu}\gamma_{ipk}\widetilde{\gamma}_{ipr}\gamma_{krl}\Bigr)e_{l}+8\sum_{s=1}^{\nu}\Bigl(\sum_{k,r=1}^{n,\nu}\gamma_{ipk}\widetilde{\gamma}_{ipr}\widetilde{\gamma}_{krs}\Bigr)\widetilde{e}_{s}.
\]
but also
\[
x^{3}=2\sum_{j=1}^{n}\Theta_{j}e_{j}+2\sum_{u=1}^{\nu}\widetilde{\Theta}_{u}\widetilde{e}_{u}
\]
noting
\begin{equation}
\Theta_{j}=\sum_{k=1}^{n}\gamma_{ipk}\gamma_{kpj}+\sum_{r=1}^{\nu}\widetilde{\gamma}_{ipr}\gamma_{irj}\;\mbox{ and }\;\widetilde{\Theta}_{u}=\sum_{k=1}^{n}\gamma_{ipk}\widetilde{\gamma}_{kpu}+\sum_{r=1}^{\nu}\widetilde{\gamma}_{ipr}\widetilde{\gamma}_{iru}\label{eq:Val_Theta}
\end{equation}
and finally we get
\[
x^{4}=2\sum_{l=1}^{n}\Bigl(\sum_{j=1}^{n}\Theta_{j}\gamma_{jpl}+\sum_{u=1}^{\nu}\widetilde{\Theta}_{u}\gamma_{iul}\Bigr)e_{l}+2\sum_{s=1}^{\nu}\Bigl(\sum_{j=1}^{n}\Theta_{j}\widetilde{\gamma}_{jps}+\sum_{u=1}^{\nu}\widetilde{\Theta}_{u}\widetilde{\gamma}_{ius}\Bigr)\widetilde{e}_{s}.
\]
With the above, relation $x^{2}x^{2}=x^{4}$ implies
\begin{eqnarray*}
4\sum_{k,r=1}^{n,\nu}\gamma_{ipk}\widetilde{\gamma}_{ipr}\gamma_{krl} & = & \sum_{j=1}^{n}\Theta_{j}\gamma_{jpl}+\sum_{u=1}^{\nu}\widetilde{\Theta}_{u}\gamma_{iul}\\
4\sum_{k,r=1}^{n,\nu}\gamma_{ipk}\widetilde{\gamma}_{ipr}\widetilde{\gamma}_{krs} & = & \sum_{j=1}^{n}\Theta_{j}\widetilde{\gamma}_{jps}+\sum_{u=1}^{\nu}\widetilde{\Theta}_{u}\widetilde{\gamma}_{ius}
\end{eqnarray*}
from which it follows that
\begin{eqnarray*}
4\sum_{k,r=1}^{n,\nu}\gamma_{ipk}\widetilde{\gamma}_{ipr} & = & 4\sum_{k,r=1}^{n,\nu}\gamma_{ipk}\widetilde{\gamma}_{ipr}\Bigl(\sum_{l=1}^{n}\gamma_{krl}+\sum_{s=1}^{\nu}\widetilde{\gamma}_{krs}\Bigr)\\
 & = & \sum_{l=1}^{n}\Bigl(\sum_{j=1}^{n}\Theta_{j}\gamma_{jpl}+\sum_{u=1}^{\nu}\widetilde{\Theta}_{u}\gamma_{iul}\Bigr)+\sum_{s=1}^{\nu}\Bigl(\sum_{j=1}^{n}\Theta_{j}\widetilde{\gamma}_{jps}+\sum_{u=1}^{\nu}\widetilde{\Theta}_{u}\widetilde{\gamma}_{ius}\Bigr)\\
 & = & \sum_{j=1}^{n}\Theta_{j}\Bigl(\sum_{l=1}^{n}\gamma_{jpl}+\sum_{s=1}^{\nu}\widetilde{\gamma}_{jps}\Bigr)+\sum_{u=1}^{\nu}\widetilde{\Theta}_{u}\Bigl(\sum_{l=1}^{n}\gamma_{iul}+\sum_{s=1}^{\nu}\widetilde{\gamma}_{ius}\Bigr)\\
 & = & \sum_{j=1}^{n}\Theta_{j}+\sum_{u=1}^{\nu}\widetilde{\Theta}_{u}.
\end{eqnarray*}
But from (\ref{eq:Val_Theta}) we have:
\begin{eqnarray*}
\sum_{j=1}^{n}\Theta_{j}+\sum_{u=1}^{\nu}\widetilde{\Theta}_{u} & = & \sum_{k=1}^{n}\gamma_{ipk}\Bigl(\sum_{j=1}^{n}\gamma_{kpj}+\sum_{u=1}^{\nu}\widetilde{\gamma}_{kpu}\Bigr)+\sum_{r=1}^{\nu}\widetilde{\gamma}_{ipr}\Bigl(\sum_{j=1}^{n}\gamma_{irj}+\sum_{u=1}^{\nu}\widetilde{\gamma}_{iru}\Bigr)\\
 & = & \sum_{k=1}^{n}\gamma_{ipk}+\sum_{r=1}^{\nu}\widetilde{\gamma}_{ipr}\;=\;1
\end{eqnarray*}
thus $\Bigl(\sum_{k=1}^{n}\gamma_{ipk}\Bigr)\Bigl(\sum_{r=1}^{\nu}\widetilde{\gamma}_{ipr}\Bigr)=\frac{1}{4}$
and with $\sum_{k=1}^{n}\gamma_{ipk}+\sum_{r=1}^{\nu}\widetilde{\gamma}_{ipr}=1$
we get
\begin{equation}
\sum_{k=1}^{n}\gamma_{ipk}=\sum_{r=1}^{\nu}\widetilde{\gamma}_{ipr}=\tfrac{1}{2},\quad\left(1\leq i\leq n,1\leq p\leq\nu\right).\label{eq:Sum=00003D1/2}
\end{equation}
By linearization of $x^{2}x^{2}=x^{4}$ we get $4x^{2}\left(xy\right)=x^{3}y+x\left(x^{2}y\right)+2x\left(x\left(xy\right)\right)$
(cf. \cite{Schafer}, p. 129), we deduce that $e_{i}\left(e_{i}\left(e_{i}\widetilde{e}_{p}\right)\right)=0$.
Using (\ref{eq:ei(ejep)}) we get
\begin{eqnarray*}
e_{i}\left(e_{i}\left(e_{i}\widetilde{e}_{p}\right)\right) & = & \sum_{k=1}^{n}\Bigl(\sum_{r,s=1}^{\nu}\widetilde{\gamma}_{irs}\widetilde{\gamma}_{ipr}\gamma_{isk}\Bigr)e_{k}+\sum_{t=1}^{\nu}\Bigl(\sum_{r,s=1}^{\nu}\widetilde{\gamma}_{irs}\widetilde{\gamma}_{ipr}\widetilde{\gamma}_{ist}\Bigr)\widetilde{e}_{t}
\end{eqnarray*}
it follows that
\begin{eqnarray*}
\sum_{r,s=1}^{\nu}\widetilde{\gamma}_{irs}\widetilde{\gamma}_{ipr}\gamma_{isk} & = & \sum_{r,s=1}^{\nu}\widetilde{\gamma}_{irs}\widetilde{\gamma}_{ipr}\widetilde{\gamma}_{ist}\;=\;0,\quad\left(1\leq i,k\leq n,1\leq p,t\leq\nu\right)
\end{eqnarray*}
and therefore for all $1\leq i\leq n,1\leq p\leq\nu$ we have
\begin{eqnarray*}
\sum_{r,s=1}^{\nu}\widetilde{\gamma}_{irs}\widetilde{\gamma}_{ipr} & = & \sum_{r,s=1}^{\nu}\widetilde{\gamma}_{irs}\widetilde{\gamma}_{ipr}\Bigl(\sum_{k=1}^{n}\gamma_{isk}+\sum_{t=1}^{\nu}\widetilde{\gamma}_{ist}\Bigr)\;=\;0,
\end{eqnarray*}
But from (\ref{eq:Sum=00003D1/2}) we have:
\[
\sum_{r,s=1}^{\nu}\widetilde{\gamma}_{irs}\widetilde{\gamma}_{ipr}=\sum_{r=1}^{\nu}\widetilde{\gamma}_{ipr}\sum_{s=1}^{\nu}\widetilde{\gamma}_{irs}=\tfrac{1}{2}\sum_{r=1}^{\nu}\widetilde{\gamma}_{ipr}=\tfrac{1}{4}
\]
and so the assumtion $A$ is power associative leads to a contradiction.\end{proof}
\begin{pro}
Gonosomal algebras do not verify the Jacobi identity.\end{pro}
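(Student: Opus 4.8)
The plan is to evaluate the Jacobi identity $\left(xy\right)z+\left(yz\right)x+\left(zx\right)y=0$ on suitably chosen triples of basis vectors and show that it forces $\sum_{k=1}^{n}\gamma_{ipk}=\sum_{r=1}^{\nu}\widetilde{\gamma}_{ipr}=0$, which contradicts the defining relation $\sum_{k=1}^{n}\gamma_{ipk}+\sum_{r=1}^{\nu}\widetilde{\gamma}_{ipr}=1$. Since a gonosomal algebra is commutative, the only ingredients needed are the products $e_{i}e_{j}=0$, $\widetilde{e}_{p}\widetilde{e}_{q}=0$ together with the products $e_{i}\left(e_{j}\widetilde{e}_{p}\right)$ and $\left(e_{i}\widetilde{e}_{p}\right)\widetilde{e}_{q}$ already computed in (\ref{eq:ei(ejep)}) and (\ref{eq:(eiep)eq}).

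First I would apply the identity to $x=y=e_{i}$ and $z=\widetilde{e}_{p}$. Because $e_{i}e_{i}=0$ and $\widetilde{e}_{p}e_{i}=e_{i}\widetilde{e}_{p}$, the three cyclic terms collapse to $2\left(e_{i}\widetilde{e}_{p}\right)e_{i}=0$; since $K$ has characteristic $\neq2$ this gives $\left(e_{i}\widetilde{e}_{p}\right)e_{i}=0$, that is, $e_{i}\left(e_{i}\widetilde{e}_{p}\right)=0$ by commutativity. Putting $j=i$ in (\ref{eq:ei(ejep)}) and setting each coordinate equal to $0$ yields $\sum_{r=1}^{\nu}\gamma_{irk}\widetilde{\gamma}_{ipr}=0$ for all $k$ and $\sum_{r=1}^{\nu}\widetilde{\gamma}_{irs}\widetilde{\gamma}_{ipr}=0$ for all $s$. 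Summing the first relation over $k=1,\dots,n$ and the second over $s=1,\dots,\nu$, adding them, and factoring out $\widetilde{\gamma}_{ipr}$, the bracket $\sum_{k=1}^{n}\gamma_{irk}+\sum_{s=1}^{\nu}\widetilde{\gamma}_{irs}$ equals $1$, which leaves $\sum_{r=1}^{\nu}\widetilde{\gamma}_{ipr}=0$ for all $1\le i\le n$, $1\le p\le\nu$.

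Next I would apply the identity to $x=e_{i}$ and $y=z=\widetilde{e}_{p}$. Here $\widetilde{e}_{p}\widetilde{e}_{p}=0$, so the identity reduces to $2\left(e_{i}\widetilde{e}_{p}\right)\widetilde{e}_{p}=0$, hence $\left(e_{i}\widetilde{e}_{p}\right)\widetilde{e}_{p}=0$. Putting $q=p$ in (\ref{eq:(eiep)eq}) and reading off the coordinates gives $\sum_{l=1}^{n}\gamma_{ipl}\gamma_{lpk}=0$ for all $k$ and $\sum_{l=1}^{n}\gamma_{ipl}\widetilde{\gamma}_{lpr}=0$ for all $r$; summing these over $k=1,\dots,n$ and over $r=1,\dots,\nu$ respectively, adding, and factoring out $\gamma_{ipl}$, the bracket $\sum_{k=1}^{n}\gamma_{lpk}+\sum_{r=1}^{\nu}\widetilde{\gamma}_{lpr}=1$ again telescopes everything down to $\sum_{l=1}^{n}\gamma_{ipl}=0$ for all $i,p$. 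Combining this with the conclusion of the previous paragraph gives $\sum_{k=1}^{n}\gamma_{ipk}+\sum_{r=1}^{\nu}\widetilde{\gamma}_{ipr}=0$, contradicting the defining relation, which proves the claim.

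I do not expect a genuine obstacle here: the computation is a variant of the device already used in the previous theorem, namely summing a coordinate relation against the normalization $\sum_{k}\gamma_{\bullet\bullet k}+\sum_{r}\widetilde{\gamma}_{\bullet\bullet r}=1$ to annihilate a double summation. The only points deserving care are the use of characteristic $\neq2$ to discard the factor $2$, and the deliberate choice of triples with a repeated entry so that two of the three cyclic terms either vanish or coincide.
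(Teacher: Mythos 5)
Your proof is correct and essentially coincides with the paper's: the paper likewise specializes the Jacobi identity to triples with a repeated entry to obtain $2e_{i}\left(e_{i}\widetilde{e}_{p}\right)=0$ and $2\widetilde{e}_{p}\left(\widetilde{e}_{p}e_{i}\right)=0$, and then invokes the same summation-against-normalization argument (which it delegates to the non-associativity part of the preceding theorem, and which you simply write out explicitly).
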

\begin{proof}
Let $A$ be a gonosomal algebra with basis $\left(e_{i}\right)_{1\text{\ensuremath{\le}}i\text{\ensuremath{\le}}n}\cup\left(\widetilde{e}_{j}\right)_{1\text{\ensuremath{\le}}j\text{\ensuremath{\le}}\nu}$
verifying the Jacobi identity. Applying Jacobi identity with
$\left(x,y\right)=\left(e_{i},\widetilde{e}_{p}\right)$ and
$\left(x,y\right)=\left(\widetilde{e}_{p},e_{i}\right)$ we get
$2e_{i}\left(e_{i}\widetilde{e}_{p}\right)=0$ and $2\widetilde{e}_{p}\left(\widetilde{e}_{p}e_{i}\right)=0$,
but in the previous proof to show that a gonosomal algebra is
not associative we have seen that this leads to a contradiction.
\end{proof}

\section{From gonosomal algebras to normalized gonosomal evolution operators}

Now we use Definition \ref{def:Gonosomal-Alg} with $K=\mathbb{R}$.
In this section we will associate two evolution operators with
each gonosomal $\mathbb{R}$-algebra.

\medskip{}

Starting from a gonosomal $\mathbb{R}$-algebra $A$, we define
the mapping
\begin{equation}
\begin{array}{cccc}
W: & A & \rightarrow & A\\
 & z & \mapsto & \frac{1}{2}z^{2}.
\end{array}\label{eq:Op_W_def}
\end{equation}
 In particular, if $\left(e_{i}\right)_{1\text{\ensuremath{\le}}i\text{\ensuremath{\le}}n}\cup\left(\widetilde{e}_{j}\right)_{1\text{\ensuremath{\le}}j\text{\ensuremath{\le}}\nu}$
is a gonosomal basis of $A$, for
\[
z^{\left(t\right)}=W^{t}\left(z\right)=\sum_{i=1}^{n}x_{i}^{\:\left(t\right)}e_{i}+\sum_{p=1}^{\nu}y_{p}^{\:\left(t\right)}\widetilde{e}_{p}
\]
 we find:
\begin{eqnarray}
z^{\left(t+1\right)}=W\bigl(z^{\left(t\right)}\bigr) & = & \sum_{k=1}^{n}\sum_{i,p=1}^{n,\nu}\gamma_{ipk}x_{i}^{\:\left(t\right)}y_{j}^{\:\left(t\right)}e_{k}+\sum_{r=1}^{\nu}\sum_{i,p=1}^{n,\nu}\widetilde{\gamma}_{ipr}x_{i}^{\:\left(t\right)}y_{j}^{\:\left(t\right)}\widetilde{e}_{r}.\label{eq:W(x(t))}
\end{eqnarray}
We notice that the components of the operator $W$ correspond
to the proportions obtained in (\ref{eq:Proportions}).

Note also in passing the difference between the gonosomal operator
and the evolution operator associated with an autosomal genetic
type that is defined by: $z\mapsto z^{2}$ (cf. \cite{Lyub-92},
p. 15 and \cite{WB-80}, p. 7).

For a given $z=\left(x,y\right)\in\mathbb{R}^{n}\times\mathbb{R}^{\nu}$
the dynamical system generated by $W$ is defined by the following
sequence $z$, $W\left(z\right)$, $W^{2}\left(z\right)$, $W^{3}\left(z\right)$,
\dots. Recall the quadratic evolution operator $W$ called gonosomal evolution operator  is defined in coordinate
form by:
\[
W:\mathbb{R}^{n+\nu}\rightarrow\mathbb{R}^{n+\nu},\left(x_{1},\ldots,x_{n},y_{1},\ldots,y_{n}\right)\mapsto\left(x_{1}',\ldots,x_{n}',y'_{1},\ldots,y'_{n}\right)
\]
\begin{equation}
W:\left\{ \begin{aligned}x_{k}' & =\sum_{i,j=1}^{n,\nu}\gamma_{ijk}x_{i}y_{j},\quad k=1,\ldots,n\medskip\\
y'_{r} & =\sum_{i,j=1}^{n,\nu}\widetilde{\gamma}_{ijr}x_{i}y_{j},\quad r=1,\ldots,\nu,
\end{aligned}
\right.\label{eq:Op-W}
\end{equation}
where
\begin{equation}
\sum_{k=1}^{n}\gamma_{ijk}+\sum_{r=1}^{\nu}\widetilde{\gamma}_{ijr}=1,\quad1\leq i\leq n,1\leq j\leq\nu.\label{eq:Op-W2}
\end{equation}

Conversely, it is clear that any operator of the form (\ref{eq:Op-W})
verifying (\ref{eq:Op-W2}) is associated to a gonosomal algebra.\medskip{}

An element $z^{*}\in\mathbb{R}^{n+\nu}$ is an equilibrium point
of the dynamical system (\ref{eq:Op-W}) if for all $t\geq1$
we have $W^{t}\left(z^{*}\right)=z^{*}$. It follows from the
equivalence $W^{t}\left(z^{*}\right)=z^{*},\forall t\geq1\Leftrightarrow W\left(z^{*}\right)=z^{*}$
that $z^{*}$ is an equilibrium point if and only if $z^{*}$
is a fixed point of $W$.

From the definition of $W$ we immediately deduce the following
result.
\begin{pro}
There is one-to-one correspondence between the idempotents of
the gonosomal algebra $A$ and the fixed points of the gonosomal
operator $W$associated with $A$.\end{pro}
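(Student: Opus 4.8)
The plan is to unwind both conditions to a single equation in the algebra $A$ and then exhibit an explicit rescaling that matches them up.

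First I would record what a fixed point of $W$ is. By \eqref{eq:Op_W_def} we have $W(z)=\frac{1}{2}z^{2}$, so $W(z)=z$ is equivalent to $z^{2}=2z$ in $A$. Expanding $z=\sum_{i}x_{i}e_{i}+\sum_{p}y_{p}\widetilde{e}_{p}$ and using $e_{i}e_{j}=\widetilde{e}_{p}\widetilde{e}_{q}=0$, this is in turn equivalent to the coordinate equations $x_{k}=\sum_{i,p}\gamma_{ipk}x_{i}y_{p}$ and $y_{r}=\sum_{i,p}\widetilde{\gamma}_{ipr}x_{i}y_{p}$, i.e.\ to $z$ being a fixed point of the operator \eqref{eq:Op-W}; but for the proof only the intrinsic form $z^{2}=2z$ is needed. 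On the other side, by definition an idempotent of $A$ is an element $e$ with $e^{2}=e$.

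Next I would define the map $\psi$ from fixed points of $W$ to elements of $A$ by $\psi(z)=\frac{1}{2}z$; this makes sense since $K=\mathbb{R}$ (indeed it works over any field of characteristic $\neq 2$). If $z^{2}=2z$, then $\psi(z)^{2}=\tfrac14 z^{2}=\tfrac14(2z)=\tfrac12 z=\psi(z)$, so $\psi(z)$ is an idempotent of $A$. Conversely I would define $\phi$ on idempotents by $\phi(e)=2e$: if $e^{2}=e$, then $\phi(e)^{2}=4e^{2}=4e=2\phi(e)$, hence $W(\phi(e))=\phi(e)$, so $\phi(e)$ is a fixed point of $W$. Since $\psi\circ\phi=\mathrm{id}$ on the set of idempotents and $\phi\circ\psi=\mathrm{id}$ on the set of fixed points, $\psi$ is a bijection, which is exactly the claimed one-to-one correspondence.

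There is no real obstacle here; it is a two-line computation, as the phrase ``we immediately deduce'' suggests. The only thing to be careful about is that the correspondence is the rescaling $z\mapsto\frac{1}{2}z$ rather than the identity map --- this offset is precisely the factor $\frac{1}{2}$ appearing in $W(z)=\frac{1}{2}z^{2}$ --- and that invertibility of $2$ in $\mathbb{R}$ is what turns this rescaling into an honest bijection. Equivalently, the whole argument can be packaged as the chain of equivalences $W(z)=z\iff z^{2}=2z\iff\bigl(\tfrac{z}{2}\bigr)^{2}=\tfrac{z}{2}$.
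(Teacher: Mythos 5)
Your argument is correct and is essentially the paper's own proof: the paper also uses the mutually inverse rescalings $e\mapsto 2e$ (idempotent to fixed point) and $z^{*}\mapsto\frac{1}{2}z^{*}$ (fixed point to idempotent), verified by the same two-line computation with $W(z)=\frac{1}{2}z^{2}$. Your version merely makes the bijectivity check ($\psi\circ\phi=\mathrm{id}$, $\phi\circ\psi=\mathrm{id}$) explicit, which the paper leaves implicit.
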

\begin{proof}
Indeed, if $e\in A$ is an idempotent, we have $W\left(2e\right)=2e$,
i.e. $2e$ is a fixed point of $W$. And if $z^{*}\in\mathbb{R}^{n+\nu}$
is a fixed point of $W$, we get $\left(\frac{1}{2}z^{*}\right)^{2}=\frac{1}{4}\left(z^{*}\right)^{2}=\frac{1}{2}W\left(z^{*}\right)=\frac{1}{2}z^{*}$
thus element $\frac{1}{2}z^{*}$ is an idempotent of $A$.
\end{proof}
Using the definition given by (\ref{eq:Op_W_def}) we get the
following result:
\begin{pro}\label{prop:Equivalent_W} Let $\varphi:A_{1}\rightarrow A_{2}$
be an isomorphism between two gonosomal algebras $A_{1}$ and
$A_{2}$, then the gonosomal operators $W_{1}:A_{1}\rightarrow A_{1}$
and $W_{2}:A_{2}\rightarrow A_{2}$ verify $\varphi\circ W_{1}=W_{2}\circ\varphi$.\end{pro}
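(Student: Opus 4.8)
The plan is to unwind the definition \eqref{eq:Op_W_def} of the gonosomal operator and use only two structural facts about $\varphi$: that it is $\mathbb{R}$-linear (hence commutes with multiplication by the scalar $\tfrac12$) and that it is multiplicative (hence $\varphi(z^2)=\varphi(z)^2$ for every $z\in A_1$). First I would fix an arbitrary $z\in A_1$ and write $W_1(z)=\tfrac12 z^2$ by definition. Applying $\varphi$ and using linearity gives $\varphi\bigl(W_1(z)\bigr)=\tfrac12\,\varphi(z^2)$. Since $\varphi$ is an algebra homomorphism, $\varphi(z^2)=\varphi(z)\varphi(z)=\varphi(z)^2$, so $\varphi\bigl(W_1(z)\bigr)=\tfrac12\,\varphi(z)^2=W_2\bigl(\varphi(z)\bigr)$, which is exactly the asserted identity $\varphi\circ W_1=W_2\circ\varphi$.

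There is essentially no obstacle here: the statement is a formal consequence of the fact that each $W_i$ is built from the single operation $z\mapsto\tfrac12 z^2$, which is preserved by any algebra isomorphism. The only point deserving a remark is that the scalar $\tfrac12$ makes sense and can be pulled through $\varphi$ precisely because we work over $K=\mathbb{R}$ (more generally a field of characteristic $\neq 2$, as required in Definition \ref{def:Gonosomal-Alg}), and because $\varphi$, being an isomorphism of $K$-algebras, is in particular $K$-linear. Note also that injectivity and surjectivity of $\varphi$ are not needed for this direction; only that it is a homomorphism of algebras, so the conclusion in fact holds for any algebra homomorphism between gonosomal algebras.

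If one prefers a coordinate proof, one can instead choose gonosomal bases of $A_1$ and $A_2$, represent $\varphi$ by the change-of-basis matrix, and verify $\varphi\circ W_1=W_2\circ\varphi$ componentwise using the explicit quadratic form \eqref{eq:Op-W} of the two operators; but this merely re-expresses the identity $\varphi(z^2)=\varphi(z)^2$ in coordinates and is strictly longer, so I would present the basis-free argument above.
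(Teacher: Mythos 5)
Your argument is correct and coincides with the paper's own proof, which is exactly the one-line computation $\varphi\circ W_{1}(x)=\varphi\bigl(\tfrac{1}{2}x^{2}\bigr)=\tfrac{1}{2}\varphi(x)^{2}=W_{2}\circ\varphi(x)$ using linearity and multiplicativity of $\varphi$. Your added remarks (characteristic $\neq 2$, and that only the homomorphism property is used) are accurate but not needed.
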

\begin{proof}
Indeed, for all $x\in A_{1}$ we have $\varphi\circ W_{1}\left(x\right)=\varphi\left(\frac{1}{2}x^{2}\right)=\frac{1}{2}\varphi\left(x\right)^{2}=W_{2}\circ\varphi\left(x\right)$.
\end{proof}
And this result suggests the following equivalence relation between
gonosomal operators;
\begin{defn}\label{va2}
Two gonosomal operators $W_{1}:A_{1}\rightarrow A_{1}$ and $W_{2}:A_{2}\rightarrow A_{2}$
are conjugate if and only if there exists an algebra-isomorphism
$\varphi:A_{1}\rightarrow A_{2}$ such that $\varphi\circ W_{1}=W_{2}\circ\varphi$.
\end{defn}
The trajectory of a point $z^{\left(0\right)}\in\mathbb{R}^{n+\nu}$
for the gonosomal operator $W$ is the sequence of iterations
$\bigl(z^{\left(t\right)}\bigr)_{t\geq0}$ defined by $z^{\left(t\right)}=W^{t}\bigl(z^{\left(0\right)}\bigr)$,
where each point $z^{\left(t\right)}$ corresponds to a state
of the population at generation $t$. If the trajectory of an
initial point $z^{\left(0\right)}$ converges, there is a point
$z^{\left(\infty\right)}$ such that $z^{\left(\infty\right)}=\lim_{t\rightarrow\infty}z^{\left(t\right)}$,
and by continuity of the operator $W$, the limit point $z^{\left(\infty\right)}$
is a fixed point of $W$.
\begin{pro}\label{va7}
If $W_{1}$, $W_{2}$ are two conjugate gonosomal operators,
there is an one-to-one correspondence between the fixed points
and the limit points of these two operators. \end{pro}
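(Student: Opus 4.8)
The plan is to transport fixed points and limit points along the conjugating isomorphism. Let $\varphi\colon A_{1}\to A_{2}$ be an algebra-isomorphism realizing the conjugacy, so that $\varphi\circ W_{1}=W_{2}\circ\varphi$ by Definition \ref{va2}. The first observation is that $\varphi$, being a linear isomorphism between the finite-dimensional real vector spaces underlying $A_{1}$ and $A_{2}$ (both of dimension $n+\nu$), is a homeomorphism: $\varphi$ and $\varphi^{-1}$ are both continuous. Moreover $\varphi^{-1}\colon A_{2}\to A_{1}$ is again an algebra-isomorphism and satisfies $\varphi^{-1}\circ W_{2}=W_{1}\circ\varphi^{-1}$, so the whole situation is symmetric in $W_{1}$ and $W_{2}$.

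Next I would handle the fixed points. If $z^{*}$ is a fixed point of $W_{1}$, then $W_{2}\bigl(\varphi(z^{*})\bigr)=\varphi\bigl(W_{1}(z^{*})\bigr)=\varphi(z^{*})$, so $\varphi(z^{*})$ is a fixed point of $W_{2}$; the symmetric argument with $\varphi^{-1}$ gives the converse. Hence $\varphi$ restricts to a bijection between the set of fixed points of $W_{1}$ and that of $W_{2}$.

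For the limit points, first establish by induction on $t$ the relation $\varphi\circ W_{1}^{t}=W_{2}^{t}\circ\varphi$ for all $t\ge 0$ (the base case $t=0$ is trivial, and the inductive step is one more application of $\varphi\circ W_{1}=W_{2}\circ\varphi$). Now suppose $z^{(\infty)}$ is a limit point of $W_{1}$, i.e. there is an initial state $z^{(0)}$ with $W_{1}^{t}(z^{(0)})\to z^{(\infty)}$. Applying the continuous map $\varphi$ and using the iterate identity yields $W_{2}^{t}\bigl(\varphi(z^{(0)})\bigr)=\varphi\bigl(W_{1}^{t}(z^{(0)})\bigr)\to\varphi(z^{(\infty)})$, so $\varphi(z^{(\infty)})$ is a limit point of $W_{2}$; the reverse inclusion follows by applying the same reasoning to $\varphi^{-1}$. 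Therefore $\varphi$ also restricts to a bijection between the limit points of $W_{1}$ and those of $W_{2}$, which is the asserted one-to-one correspondence.

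I do not expect a genuine obstacle: the argument is essentially formal once one notes that a linear algebra-isomorphism of finite-dimensional algebras is automatically a homeomorphism, so the continuity needed to push limits through $\varphi$ (and the earlier remark that limit points are fixed points) is available. The only points requiring a line of care are the inductive verification that $\varphi$ intertwines all iterates $W_{1}^{t}$ and $W_{2}^{t}$, and the observation that $\varphi^{-1}$ conjugates $W_{2}$ back to $W_{1}$, which is what upgrades each ``maps into'' statement to a genuine bijection.
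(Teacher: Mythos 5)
Your proposal is correct and follows essentially the same route as the paper: transporting fixed points via the intertwining relation $\varphi\circ W_{1}=W_{2}\circ\varphi$ and pushing convergent trajectories through the continuous map $\varphi$ (with $\varphi^{-1}$ giving the inverse correspondence). The paper's own proof is just a terser version of this argument, so there is nothing to add.
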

\begin{proof} This is very known fact see, for example \cite{De}. Here we give a brief proof. Let $\varphi:A_{1}\rightarrow A_{2}$ be the algebra-isomorphism
connecting $W_{1}$ to $W_{2}$. If $z_{1}^{*}$ is a fixed point
of $W_{1}$, by $\varphi\left(z_{1}^{*}\right)=\varphi\circ W_{1}\left(z_{1}^{*}\right)=W_{2}\circ\varphi\left(z_{1}^{*}\right)$
we get that $\varphi\left(z_{1}^{*}\right)$ is a fixed point
of $W_{2}$. And if $z_{1}^{\left(\infty\right)}$, $z_{2}^{\left(\infty\right)}$
are limit points for $W_{1}$ et $W_{2}$ respectively, we get
easily by continuity of $\varphi$: $\varphi\bigl(x_{1}^{\left(\infty\right)}\bigr)=\left(\varphi\bigl(x_{1}^{\left(0\right)}\bigr)\right)^{\left(\infty\right)}$
and $\varphi^{-1}\bigl(x_{2}^{\left(\infty\right)}\bigr)=\left(\varphi^{-1}\bigl(x_{2}^{\left(0\right)}\bigr)\right)^{\left(\infty\right)}$.\medskip{}

\end{proof}
To every gonosomal algebra $A$ is canonically attached the linear
form:
\begin{equation}
\varpi:A\rightarrow\mathbb{R},\quad\varpi\left(e_{i}\right)=\varpi\left(\widetilde{e}_{j}\right)=1.\label{eq:form_lin_pi_def}
\end{equation}
Applying $\varpi$ to (\ref{eq:W(x(t))}) we find
\begin{equation}
\varpi\bigl(z^{\left(t+1\right)}\bigr)=\sum_{i=1}^{n}x_{i}^{\:\left(t+1\right)}+\sum_{j=1}^{\nu}y_{j}^{\:\left(t+1\right)}=\bigl(\sum_{i=1}^{n}x_{i}^{\:\left(t\right)}\bigr)\bigl(\sum_{j=1}^{\nu}y_{j}^{\:\left(t\right)}\bigr)\label{omega}
\end{equation}
 which corresponds to the relation (\ref{eq:Eff-Tot}).

\medskip{}

On the fixed points of $W$ with non-negative components we have:
\begin{pro}
\label{prop:PtfixeW}If $z^{*}\in\mathbb{R}_{+}^{n+\nu}$, $z^{*}\neq0$
is a fixed point of $W$ then $\varpi\left(z^{*}\right)\geq4$.\end{pro}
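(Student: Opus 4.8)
The plan is to reduce the assertion to a single scalar identity relating the two ``sex totals'' of $z^{*}$, after which one line of AM--GM finishes the argument. Write $z^{*}=\left(x^{*},y^{*}\right)$ with $x^{*}\in\mathbb{R}_{+}^{n}$, $y^{*}\in\mathbb{R}_{+}^{\nu}$, and put $s=\sum_{i=1}^{n}x_{i}^{*}$ and $m=\sum_{j=1}^{\nu}y_{j}^{*}$. By the definition \eqref{eq:form_lin_pi_def} of $\varpi$ we have $\varpi\left(z^{*}\right)=s+m$, so it suffices to prove $s+m\geq4$.

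The key step is the conservation relation \eqref{omega} (which is just \eqref{eq:Eff-Tot} rewritten via $\varpi$): applied to the fixed point $z^{*}$, i.e.\ with $z^{(t)}=z^{(t+1)}=z^{*}$, it gives
\[
s+m \;=\; \varpi\left(z^{*}\right) \;=\; \varpi\bigl(W(z^{*})\bigr) \;=\; \Bigl(\sum_{i=1}^{n}x_{i}^{*}\Bigr)\Bigl(\sum_{j=1}^{\nu}y_{j}^{*}\Bigr) \;=\; sm .
\]
This identity already forces $s$ and $m$ to be simultaneously zero or simultaneously positive: if $m=0$ then $sm=0$, hence $s+m=0$ and $z^{*}=0$, contrary to the hypothesis $z^{*}\neq0$; and symmetrically if $s=0$. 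Therefore $s,m>0$.

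Finally, since $s,m>0$, the AM--GM inequality yields $sm=s+m\geq 2\sqrt{sm}$, so $\sqrt{sm}\geq2$, and hence $\varpi\left(z^{*}\right)=s+m=sm\geq4$, which is the desired bound.

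I do not anticipate a genuine obstacle: the one point requiring care is the passage from the fixed-point condition to the scalar identity $s+m=sm$ through \eqref{omega}, together with the observation that this same identity disposes of the degenerate case in which one of the two totals vanishes, thereby making AM--GM legitimately applicable. Note that only the non-negativity of the coordinates of $z^{*}$ is used (to have $s,m\ge 0$ and to run AM--GM); no positivity assumption on the structure constants $\gamma_{ipk},\widetilde{\gamma}_{ipr}$ is needed.
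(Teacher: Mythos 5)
Your proof is correct and takes essentially the same route as the paper: both arguments reduce the fixed-point condition, via the normalization $\sum_{k}\gamma_{ijk}+\sum_{r}\widetilde{\gamma}_{ijr}=1$, to the scalar identity $s+m=sm=\varpi\left(z^{*}\right)$, and your AM--GM step $\left(s+m\right)^{2}\geq4sm$ is exactly the discriminant condition for the polynomial $X^{2}-\varpi\left(z^{*}\right)X+\varpi\left(z^{*}\right)$ that the paper invokes. Your explicit elimination of the degenerate case $s=0$ or $m=0$ (hence $\varpi\left(z^{*}\right)=0$) is a small point the paper glosses over, but it is the same argument in substance.
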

\begin{proof}
Let $z^{*}=\left(x_{1},\ldots,x_{n},y_{1},\ldots,y_{\nu}\right)$
be a fixed point of $W$, with $x_{k},y_{r}\geq0$. From $W\left(z^{*}\right)=z^{*}$
we deduce that $\left(\sum_{k}x_{k}\right)\left(\sum_{r}y_{r}\right)=\sum_{k}x_{k}+\sum_{r}y_{r}=\varpi\left(z^{*}\right)$
so that $\sum_{k}x_{k}$ and $\sum_{r}y_{r}$ are positive real
roots of the polynomial $X^{2}-\varpi\left(z^{*}\right)X+\varpi\left(z^{*}\right)$
with $\varpi\left(z^{*}\right)\in\mathbb{R}_{+}$, but $\varpi\left(z^{*}\right)\left(\varpi\left(z^{*}\right)-4\right)\geq0$
and $\varpi\left(z^{*}\right)\geq0$ only if $\varpi\left(z^{*}\right)\geq4$.
\end{proof}
\medskip{}

For applications in genetics we restrict to the simplex of $\mathbb{R}^{n+\nu}$:
\[
S^{\:n+\nu-1}=\left\{ \left(x_{1},\ldots,x_{n},y_{1},\ldots,y_{\nu}\right)\in\mathbb{R}^{n+\nu}:x_{i}\geq0,y_{i}\geq0,\sum_{i=1}^{n}x_{i}+\sum_{i=1}^{\nu}y_{i}=1\right\}
\]
this simplex is associated with frequency distributions of the
genetic types $e_{i}$ and $\widetilde{e}_{j}$. But the gonosomal
operator $W$ does not preserve the simplex $S^{\:n+\nu-1}$,
indeed :
\begin{pro}\label{pro}
Let $A$ be a gonosomal $\mathbb{R}$-algebra of type $\left(n,\nu\right)$,
we have:

a) $W\left(\mathbb{R}_{+}^{n+\nu}\right)\subset\mathbb{R}_{+}^{n+\nu}$
if and only if $\gamma_{ijk}\geq0$ and $\widetilde{\gamma}_{ijr}\geq0$
for all $1\leq i,k\leq n$ and $1\leq j,r\leq\nu$.

b) $\varpi\circ W\left(z\right)\leq\frac{1}{4}$ for all $z\in S^{\:n+\nu-1}$.\end{pro}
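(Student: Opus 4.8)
The plan is to verify both parts directly from the coordinate description (\ref{eq:Op-W}) of $W$ together with the normalization (\ref{eq:Op-W2}); neither part needs anything beyond elementary inequalities, so there is no serious obstacle, only a little bookkeeping to keep track of.

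For part a), the implication ``$\Leftarrow$'' is immediate: if every $\gamma_{ijk}$ and every $\widetilde{\gamma}_{ijr}$ is non-negative, then for $z=(x_{1},\ldots,x_{n},y_{1},\ldots,y_{\nu})\in\mathbb{R}_{+}^{n+\nu}$ each coordinate $x_{k}'=\sum_{i,j}\gamma_{ijk}x_{i}y_{j}$ and $y_{r}'=\sum_{i,j}\widetilde{\gamma}_{ijr}x_{i}y_{j}$ is a finite sum of products of non-negative reals, hence non-negative, so $W(z)\in\mathbb{R}_{+}^{n+\nu}$. For ``$\Rightarrow$'' I would test $W$ on the basis pairs: fix $1\le i_{0}\le n$ and $1\le j_{0}\le\nu$, and take $z$ to be the point with $x_{i_{0}}=1$, $y_{j_{0}}=1$ and all other coordinates zero. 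Then $z\in\mathbb{R}_{+}^{n+\nu}$, and from (\ref{eq:Op-W}) one reads off that $W(z)$ has coordinates $x_{k}'=\gamma_{i_{0}j_{0}k}$ and $y_{r}'=\widetilde{\gamma}_{i_{0}j_{0}r}$. Since by hypothesis $W(z)\in\mathbb{R}_{+}^{n+\nu}$, all these structure constants are $\ge0$; letting $i_{0},j_{0}$ range over all admissible values gives the claim.

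For part b), let $z=(x_{1},\ldots,x_{n},y_{1},\ldots,y_{\nu})\in S^{\:n+\nu-1}$ and put $a=\sum_{i=1}^{n}x_{i}$ and $b=\sum_{j=1}^{\nu}y_{j}$, so that $a,b\ge0$ and $a+b=1$. Applying $\varpi$ to $W(z)$ and using the normalization (\ref{eq:Op-W2}) exactly as in the computation (\ref{eq:Eff-Tot}) (equivalently, invoking (\ref{omega})), one gets
\[
\varpi\bigl(W(z)\bigr)=\sum_{k=1}^{n}x_{k}'+\sum_{r=1}^{\nu}y_{r}'=\Bigl(\sum_{i=1}^{n}x_{i}\Bigr)\Bigl(\sum_{j=1}^{\nu}y_{j}\Bigr)=ab .
\]
By the arithmetic--geometric mean inequality, $ab\le\bigl(\tfrac{a+b}{2}\bigr)^{2}=\tfrac14$, which is the desired bound, with equality precisely when $a=b=\tfrac12$.

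If anything here could be called the ``hard part'', it is merely the bookkeeping: in part b) one must remember to use (\ref{eq:Op-W2}) when collapsing $\sum_{k}\gamma_{ijk}+\sum_{r}\widetilde{\gamma}_{ijr}$ to $1$, and in part a) one must check that the chosen test points genuinely lie in $\mathbb{R}_{+}^{n+\nu}$ (they do, being non-negative). No continuity or algebraic-structure arguments are needed.
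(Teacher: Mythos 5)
Your proposal is correct and follows essentially the same route as the paper: the test points $x_{i_0}=y_{j_0}=1$ (all other coordinates zero) are exactly the paper's evaluation of $W\left(e_{i}+\widetilde{e}_{j}\right)$ for the necessity in part a), and part b) rests on the same identity $\varpi\circ W\left(z\right)=\bigl(\sum_{i}x_{i}\bigr)\bigl(\sum_{j}y_{j}\bigr)$ combined with the inequality $4ab\leq\left(a+b\right)^{2}$. No changes needed.
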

\begin{proof}
For \emph{a}) the sufficient condition is immediate. For the
necessary condition it suffices to note that $W\left(e_{i}+\widetilde{e}_{j}\right)=\sum_{k=1}^{n}\gamma_{ijk}e_{k}+\sum_{k=1}^{n}\widetilde{\gamma}_{ijk}\widetilde{e}_{k}$
for every $1\leq i\leq n$ and $1\leq j\leq\nu$.\emph{ }Result
\emph{b}) follows from the well known inequality $4ab\leq\left(a+b\right)^{2}$.
\end{proof}
This leads to the following definition.
\begin{defn}
We say that a $K$-algebra $A$ is a gonosomal stochastic algebra
of type $\left(n,\nu\right)$ if it satisfies the definition
\ref{def:Gonosomal-Alg} with $K=\mathbb{R}$ and $\gamma_{ipk}\geq0$,
$\widetilde{\gamma}_{ipr}\geq0$ for all $1\leq i,k\leq n$ and
$1\leq p,r\leq\nu$.
\end{defn}
In a gonosomal stochastic algebra with basis $\left(e_{i}\right)_{1\text{\ensuremath{\le}}i\text{\ensuremath{\le}}n}\cup\left(\widetilde{e}_{p}\right)_{1\text{\ensuremath{\le}}p\text{\ensuremath{\le}}\nu}$,
the elements of $\left(e_{i}\right)_{1\text{\ensuremath{\le}}i\text{\ensuremath{\le}}n}$
(resp. $\left(\widetilde{e}_{p}\right)_{1\text{\ensuremath{\le}}p\text{\ensuremath{\le}}\nu}$)
represent genetic types observed in females (resp. in males),
and the structure constants $\gamma_{ipk}$ (resp. $\widetilde{\gamma}_{ipr}$)
are the inheritance coefficients, that is to say the probability
that a female (resp. a male) offspring is of type $e_{k}$ (resp.
$\widetilde{e}_{r}$) when the parental pair is a female of type
$e_{i}$ and a male of type $\widetilde{e}_{p}$.
\begin{pro}
Let $A$ be a gonosomal stochastic algebra of type $\left(n,\nu\right)$
and $z\in\mathbb{R}_{+}^{n+\nu}$.

\qquad{}a) If $\varpi\left(z\right)=0$ then $z=0$.

For all $t\geq1$ we denote $z^{\left(t\right)}=W^{t}\bigl(z\bigr)$,
then we have:

\qquad{}b) If $\varpi\bigl(z\bigr)\leq4$ , the sequence $\left(\varpi\bigl(z^{\left(t\right)}\bigr)\right)_{t\geq0}$
is decreasing.

\qquad{}c) For $t\geq0$,
\[
\Bigl(\min_{i,j}\Bigl\{\sqrt{\gamma_{ij}\widetilde{\gamma}_{ij}}\Bigr\}\Bigr)^{2\left(2^{t}-1\right)}\left(\varpi\bigl(z\bigr)\right)^{2^{t}}\leq\varpi\bigl(z^{\left(t\right)}\bigr)\leq\Bigl(\max_{i,j,p,q}\left\{ \gamma_{ij}\widetilde{\gamma}_{pq}\right\} \Bigr)^{2^{t}-1}\left(\varpi\bigl(z\bigr)\right)^{2^{t}},
\]

\[
\varpi\bigl(z^{\left(t\right)}\bigr)\leq\left(\max_{i,j,p,q}\left\{ \tfrac{1}{16}\gamma_{ij}\widetilde{\gamma}_{pq}\right\} \right)^{\frac{1}{3}\bigl(4^{\left\lfloor \nicefrac{t}{2}\right\rfloor }-1\bigr)}\times\begin{cases}
\;\;\Bigl(\varpi\bigl(z\bigr)\Bigr){}^{4^{\left\lfloor \nicefrac{t}{2}\right\rfloor }} & \mbox{if }t\mbox{ is even,}\medskip\\
\Bigl(\tfrac{1}{4}\varpi\bigl(z\bigr)\Bigr){}^{4^{\left\lfloor \nicefrac{t}{2}\right\rfloor }} & \mbox{if }t\mbox{ is odd},
\end{cases}
\]
where we put $\gamma_{ij}=\sum_{k=1}^{n}\gamma_{ijk}$ and $\widetilde{\gamma}_{pq}=\sum_{r=1}^{\nu}\widetilde{\gamma}_{pqr}$
for all $1\leq i,p\leq n$ and $1\leq j,q\leq\nu$.\end{pro}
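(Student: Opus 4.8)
My plan is to reduce the whole statement to the single scalar relation \eqref{omega}. Writing $\xi_t:=\sum_{i=1}^{n}x_i^{(t)}$ and $\eta_t:=\sum_{j=1}^{\nu}y_j^{(t)}$, \eqref{omega} says $\varpi\bigl(z^{(t)}\bigr)=\xi_t+\eta_t$ and $\varpi\bigl(z^{(t+1)}\bigr)=\xi_t\eta_t$ for every $t\ge 0$, while Proposition \ref{pro}(a) (applicable because $A$ is gonosomal stochastic) gives $z^{(t)}\in\mathbb{R}_{+}^{n+\nu}$, so that $\xi_t,\eta_t\ge 0$ and every inequality below is between non-negative reals. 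Part (a) is then immediate: if $z\in\mathbb{R}_{+}^{n+\nu}$ and $\varpi(z)=\sum_i x_i+\sum_j y_j=0$, every coordinate of $z$ vanishes. For part (b), the inequality $4\xi_t\eta_t\le(\xi_t+\eta_t)^2$ rewrites as $\varpi\bigl(z^{(t+1)}\bigr)\le\tfrac{1}{4}\varpi\bigl(z^{(t)}\bigr)^2$; assuming $\varpi(z)\le 4$ one first checks by induction that $\varpi\bigl(z^{(t)}\bigr)\le 4$ for all $t$ (the inductive step being $\tfrac{1}{4}\cdot 4^2=4$), and then $\varpi\bigl(z^{(t+1)}\bigr)\le\tfrac{1}{4}\varpi\bigl(z^{(t)}\bigr)^2\le\varpi\bigl(z^{(t)}\bigr)$, so the sequence is non-increasing.

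The heart of part (c) is a two-sided one-step recursion for $\varpi\bigl(z^{(t)}\bigr)$. For $t\ge 1$ one has $\xi_t=\sum_{i,j}\gamma_{ij}w_{ij}$ and $\eta_t=\sum_{i,j}\widetilde{\gamma}_{ij}w_{ij}$ with $w_{ij}:=x_i^{(t-1)}y_j^{(t-1)}\ge 0$, so that $\sum_{i,j}w_{ij}=\xi_{t-1}\eta_{t-1}=\varpi\bigl(z^{(t)}\bigr)$ and $\gamma_{ij}+\widetilde{\gamma}_{ij}=1$. Bounding each coefficient above by its maximum gives $\varpi\bigl(z^{(t+1)}\bigr)=\xi_t\eta_t\le\bigl(\max_{i,j,p,q}\gamma_{ij}\widetilde{\gamma}_{pq}\bigr)\,\varpi\bigl(z^{(t)}\bigr)^2$. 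For the matching lower bound I would invoke the Cauchy--Schwarz inequality $\bigl(\sum u_k^2\bigr)\bigl(\sum v_k^2\bigr)\ge\bigl(\sum u_k v_k\bigr)^2$ with $u_{ij}=\sqrt{\gamma_{ij}w_{ij}}$ and $v_{ij}=\sqrt{\widetilde{\gamma}_{ij}w_{ij}}$, which yields $\xi_t\eta_t\ge\bigl(\sum_{i,j}\sqrt{\gamma_{ij}\widetilde{\gamma}_{ij}}\;w_{ij}\bigr)^2\ge\bigl(\min_{i,j}\sqrt{\gamma_{ij}\widetilde{\gamma}_{ij}}\bigr)^2\varpi\bigl(z^{(t)}\bigr)^2$. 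This Cauchy--Schwarz step is the place where real care is needed: the cruder estimate $\xi_t\eta_t\ge(\min_{i,j}\gamma_{ij})(\min_{i,j}\widetilde{\gamma}_{ij})\varpi\bigl(z^{(t)}\bigr)^2$ only produces a smaller constant, whereas the Cauchy--Schwarz bound gives exactly $\min_{i,j}(\gamma_{ij}\widetilde{\gamma}_{ij})=\bigl(\min_{i,j}\sqrt{\gamma_{ij}\widetilde{\gamma}_{ij}}\bigr)^2$. Iterating these one-step inequalities and telescoping the geometric product of exponents, $1+2+\cdots+2^{t-1}=2^t-1$, gives the first displayed inequality of (c), with $\varpi(z)$ raised to the power $2^t$.

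For the last displayed inequality of (c) I would compose the upper one-step bound with the AM--GM bound $\varpi\bigl(z^{(t+1)}\bigr)\le\tfrac{1}{4}\varpi\bigl(z^{(t)}\bigr)^2$ of part (b) to obtain the two-step recursion $\varpi\bigl(z^{(t+2)}\bigr)\le\bigl(\max_{i,j,p,q}\gamma_{ij}\widetilde{\gamma}_{pq}\bigr)\varpi\bigl(z^{(t+1)}\bigr)^2\le\bigl(\max_{i,j,p,q}\tfrac{1}{16}\gamma_{ij}\widetilde{\gamma}_{pq}\bigr)\varpi\bigl(z^{(t)}\bigr)^4$, and then run it separately along the even indices (starting from $\varpi\bigl(z^{(0)}\bigr)=\varpi(z)$) and along the odd indices (starting from $\varpi\bigl(z^{(1)}\bigr)\le\tfrac{1}{4}\varpi(z)^2$), summing the exponents via $1+4+\cdots+4^{m-1}=\tfrac{1}{3}(4^m-1)$ with $m=\lfloor t/2\rfloor$. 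I expect the remaining work to be essentially bookkeeping once the one-step recursion is established; the two points to watch are obtaining the sharp lower-bound constant through the Cauchy--Schwarz argument rather than the naive one, and handling the first step carefully---in particular the value of $\varpi\bigl(z^{(1)}\bigr)$ relative to $\varpi(z)$---so that the powers of $4$, of $\tfrac{1}{16}$, and of $\varpi(z)$ come out exactly as stated.
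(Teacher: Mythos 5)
Your proposal is correct and follows essentially the same route as the paper: part (b) via $4\xi_t\eta_t\le(\xi_t+\eta_t)^2$, and part (c) by establishing the one-step bounds $\bigl(\min_{i,j}\sqrt{\gamma_{ij}\widetilde{\gamma}_{ij}}\bigr)^{2}\varpi\bigl(z^{(t)}\bigr)^{2}\le\varpi\bigl(z^{(t+1)}\bigr)\le\max_{i,j,p,q}\{\gamma_{ij}\widetilde{\gamma}_{pq}\}\,\varpi\bigl(z^{(t)}\bigr)^{2}$ and iterating, with the two-step/odd case handled through $\varpi\bigl(z^{(1)}\bigr)\le\tfrac14\varpi(z)^2$, exactly as in the paper. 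The only cosmetic difference is that you justify the lower one-step bound by Cauchy--Schwarz, whereas the paper symmetrizes the double sum over $(i,j)$ and $(p,q)$ and applies $a+b\ge 2\sqrt{ab}$ termwise; these are the same computation and yield the same constant.
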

\begin{proof}
\emph{a})\emph{ }Immediate.

In what follows for all $t\geq0$ we note $z^{\left(t\right)}=\left(x_{1}^{\:\left(t\right)},\ldots,x_{n}^{\:\left(t\right)},y_{1}^{\:\left(t\right)},\ldots,y_{\nu}^{\:\left(t\right)}\right)$
where $z^{\left(0\right)}=z$.

\emph{b}) We show recursively with the relations (\ref{eq:Op-W})
that $z^{\left(t\right)}\in\mathbb{R}_{+}^{n+\nu}$ for every
$t\geq0$. From
\[
4\Bigl(\sum_{k=1}^{n}x_{k}^{\:\left(t-1\right)}\Bigr)\Bigl(\sum_{r=1}^{\nu}y_{r}^{\:\left(t-1\right)}\Bigr)\leq\Bigl(\sum_{k=1}^{n}x_{k}^{\:\left(t-1\right)}+\sum_{r=1}^{\nu}y_{r}^{\:\left(t-1\right)}\Bigr)^{2}
\]
we deduce that we have for all $t\geq1$ :
\[
4\varpi\bigl(z^{\left(t\right)}\bigr)\leq\Bigl(\varpi\bigl(z^{\left(t-1\right)}\bigr)\Bigr)^{2},\qquad\left(*\right)
\]
from $0\leq\varpi\left(z\right)\leq4$ we infer that $\Bigl(\varpi\bigl(z\bigr)\Bigr)^{2}\leq4\varpi\bigl(z\bigr)$
and with $\left(*\right)$ it follows $\varpi\bigl(z^{\left(1\right)}\bigr)\leq\varpi\bigl(z\bigr)\leq4$
then by $\left(*\right)$ and by induction the result is obtained.

\emph{c}) Indeed, from (\ref{omega}) we have:
\begin{eqnarray*}
\varpi\bigl(z^{\left(t\right)}\bigr) & = & \Bigl(\sum_{k=1}^{n}x_{k}^{\:\left(t-1\right)}\Bigr)\Bigl(\sum_{r=1}^{\nu}y_{r}^{\:\left(t-1\right)}\Bigr)
\end{eqnarray*}
with relations (\ref{eq:Op-W}) this is written
\begin{eqnarray}
\varpi\bigl(z^{\left(t\right)}\bigr) & = & \Bigl(\sum_{i,j=1}^{n,\nu}\gamma_{ij}\:x_{i}^{\:\left(t-2\right)}y_{j}^{\;\left(t-2\right)}\Bigr)\Bigl(\sum_{p,q=1}^{n,\nu}\widetilde{\gamma}_{pq}\:x_{p}^{\:\left(t-2\right)}y_{q}^{\;\left(t-2\right)}\Bigr)\nonumber \\
 & = & \sum_{i,p=1}^{n}\sum_{j,q=1}^{\nu}\gamma_{ij}\widetilde{\gamma}_{pq}\:x_{i}^{\:\left(t-2\right)}x_{p}^{\:\left(t-2\right)}y_{j}^{\;\left(t-2\right)}y_{q}^{\;\left(t-2\right)}\label{eq:Rel1}
\end{eqnarray}
consequently
\begin{equation}
\varpi\bigl(z^{\left(t\right)}\bigr)\leq\max_{i,j,p,q}\left\{ \gamma_{ij}\widetilde{\gamma}_{pq}\right\} \Bigl(\sum_{i=1}^{n}x_{i}^{\:\left(t-2\right)}\Bigr)^{2}\Bigl(\sum_{j=1}^{\nu}y_{j}^{\;\left(t-2\right)}\Bigr)^{2}\label{eq:Rel2}
\end{equation}
but from (\ref{omega}) we have $\Bigl(\sum_{k=1}^{n}x_{k}^{\:\left(t-2\right)}\Bigr)\Bigl(\sum_{r=1}^{\nu}y_{r}^{\;\left(t-2\right)}\Bigr)=\varpi\left(z^{\left(t-1\right)}\right)$
and thus
\begin{eqnarray*}
\varpi\bigl(z^{\left(t\right)}\bigr) & \leq & \max_{i,j,p,q}\left\{ \gamma_{ij}\widetilde{\gamma}_{pq}\right\} \left(\varpi\bigl(z^{\left(t-1\right)}\bigr)\right)^{2},
\end{eqnarray*}
we deduce by induction: $\varpi\bigl(z^{\left(t\right)}\bigr)\leq\Bigl(\max_{i,j,p,q}\left\{ \gamma_{ij}\widetilde{\gamma}_{pq}\right\} \Bigr)^{2^{t}-1}\left(\varpi\bigl(z\bigr)\right)^{2^{t}}$.

By exchanging the roles of $\left(i,j\right)$ and $\left(p,q\right)$
in (\ref{eq:Rel1}) we obtain:
\begin{eqnarray*}
\varpi\bigl(z^{\left(t\right)}\bigr) & = & \sum_{i,p=1}^{n}\sum_{j,q=1}^{\nu}\gamma_{pq}\widetilde{\gamma}_{ij}\:x_{i}^{\:\left(t-2\right)}x_{p}^{\:\left(t-2\right)}y_{j}^{\;\left(t-2\right)}y_{q}^{\;\left(t-2\right)}
\end{eqnarray*}
hence
\begin{eqnarray*}
\varpi\bigl(z^{\left(t\right)}\bigr) & = & \sum_{i,p=1}^{n}\sum_{j,q=1}^{\nu}\tfrac{1}{2}\left(\gamma_{ij}\widetilde{\gamma}_{pq}+\gamma_{pq}\widetilde{\gamma}_{ij}\right)\:x_{i}^{\:\left(t-2\right)}x_{p}^{\:\left(t-2\right)}y_{j}^{\;\left(t-2\right)}y_{q}^{\;\left(t-2\right)}
\end{eqnarray*}
but from $a+b\geq2\sqrt{ab}$ it follows
\begin{eqnarray*}
\varpi\bigl(z^{\left(t\right)}\bigr) & \geq & \sum_{i,p=1}^{n}\sum_{j,q=1}^{\nu}\sqrt{\gamma_{ij}\gamma_{pq}\widetilde{\gamma}_{ij}\widetilde{\gamma}_{pq}}\:x_{i}^{\:\left(t-2\right)}x_{p}^{\:\left(t-2\right)}y_{j}^{\;\left(t-2\right)}y_{q}^{\;\left(t-2\right)}\\
 & = & \Bigl(\sum_{i,j=1}^{n,\nu}\sqrt{\gamma_{ij}\widetilde{\gamma}_{ij}}\:x_{i}^{\:\left(t-2\right)}y_{j}^{\;\left(t-2\right)}\Bigr)^{2}\\
 & \geq & \Bigl(\min_{i,j}\Bigl\{\sqrt{\gamma_{ij}\widetilde{\gamma}_{ij}}\Bigr\}\Bigr)^{2}\Bigl(\sum_{i=1}^{n}x_{i}^{\:\left(t-2\right)}\Bigr)^{2}\Bigl(\sum_{j=1}^{\nu}y_{j}^{\;\left(t-2\right)}\Bigr)^{2}
\end{eqnarray*}
consequently
\begin{eqnarray*}
\Bigl(\min_{i,j}\Bigl\{\sqrt{\gamma_{ij}\widetilde{\gamma}_{ij}}\Bigr\}\Bigr)^{2}\Bigl(\varpi\bigl(z^{\left(t-1\right)}\bigr)\Bigr)^{2} & \leq & \varpi\bigl(z^{\left(t\right)}\bigr),
\end{eqnarray*}
and we deduce by induction that $\Bigl(\min_{i,j}\Bigl\{\sqrt{\gamma_{ij}\widetilde{\gamma}_{ij}}\Bigr\}\Bigr)^{2\left(2^{t}-1\right)}\left(\varpi\bigl(z\bigr)\right)^{2^{t}}\leq\varpi\bigl(z^{\left(t\right)}\bigr)$.

From (\ref{eq:Rel2}) using (\ref{omega}) and $ab\leq\frac{1}{4}\left(a+b\right)^{2}$
it follows that
\[
\varpi\bigl(z^{\left(t\right)}\bigr)\leq\max_{i,j,p,q}\left\{ \tfrac{1}{16}\gamma_{ij}\widetilde{\gamma}_{pq}\right\} \Bigl(\varpi\left(z^{\left(t-2\right)}\right)\Bigr)^{4}
\]
thus by induction
\[
\varpi\bigl(z^{\left(t\right)}\bigr)\leq\Bigl(\max_{i,j,p,q}\left\{ \tfrac{1}{16}\gamma_{ij}\widetilde{\gamma}_{pq}\right\} \Bigr)^{\frac{1}{3}\left(4^{\left\lfloor \nicefrac{t}{2}\right\rfloor }-1\right)}\Bigl(\varpi\left(z^{\left(t-2\left\lfloor \frac{t}{2}\right\rfloor \right)}\right)\Bigr)^{4^{\left\lfloor \nicefrac{t}{2}\right\rfloor }}
\]
we deduce immediately the result when $t$ is even and when $t$
is odd it suffices to note that $\varpi\bigl(z^{\left(1\right)}\bigr)=\left(\sum_{k}x_{k}\right)\left(\sum_{r}y_{r}\right)\leq\frac{1}{4}\left(\varpi\bigl(z\bigr)\right)^{2}$.
\end{proof}
Denote
\[
\mathcal{O}^{\:n,\nu}=\left\{ \left(x_{1},\ldots,x_{n},y_{1},\ldots,y_{\nu}\right)\in\mathbb{R}^{n+\nu}:x_{1}=\cdots=x_{n}=0\mbox{ or }y_{1}=\cdots=y_{\nu}=0\right\}.
\]
It is easy to see that for $z\in\mathbb{R}_{+}^{n+\nu}$ we have:
\[
\varpi\circ W\left(z\right)=\Bigl(\sum_{i=1}^{n}x_{i}\Bigr)\Bigl(\sum_{j=1}^{\nu}y_{j}\Bigr)=0\;\Leftrightarrow\;z\in\mathcal{O}^{\:n,\nu}.
\]

Therefore if we denote
\[
S^{\:n,\nu}=S^{\:n+\nu-1}\setminus\mathcal{O}^{\:n,\nu}
\]
then the operator
\[
V:S^{\:n,\nu}\rightarrow S^{\:n,\nu},\quad z\mapsto\frac{1}{\varpi\circ W\left(z\right)}W\left(z\right)
\]
is well defined, it is called the normalized gonosomal operator
of $W$. Using the relations (\ref{eq:Op-W}) we can express
the operator $V$ in coordinate form by:
\begin{equation}
V:\left\{ \begin{aligned}x_{k}' & =\frac{\sum_{i,j=1}^{n,\nu}\gamma_{ijk}x_{i}y_{j}}{\left(\sum_{i=1}^{n}x_{i}\right)\bigl(\sum_{j=1}^{\nu}y_{j}\bigr)},\quad k=1,\ldots,n\medskip\\
y'_{r} & =\frac{\sum_{i,j=1}^{n,\nu}\widetilde{\gamma}_{ijr}x_{i}y_{j}}{\left(\sum_{i=1}^{n}x_{i}\right)\bigl(\sum_{j=1}^{\nu}y_{j}\bigr)},\quad r=1,\ldots,\nu.
\end{aligned}
\right.\label{eq:Op-V}
\end{equation}
We can notice that the coordinates of the operator $V$ correspond
to the frequency distributions of genetic types obtained in (\ref{eq:Frequencies}).

\medskip{}

\begin{pro}
Let $A$ be a gonosomal stochastic algebra of type $\left(n,\nu\right)$.
For all $z\in S^{\:n,\nu}$ and $t\geq1$ we define $z^{\left(t\right)}=V^{t}\bigl(z\bigr)=\left(x_{1}^{\:\left(t\right)},\ldots,x_{n}^{\:\left(t\right)},y_{1}^{\;\left(t\right)},\ldots,y_{\nu}^{\;\left(t\right)}\right)$,
then we have
\[
\min_{i,j}\left\{ \gamma_{ijk}\right\} \leq x_{k}^{\:\left(t\right)}\leq\max_{i,j}\left\{ \gamma_{ijk}\right\} \quad\mbox{and}\quad\min_{i,j}\left\{ \widetilde{\gamma}_{ijr}\right\} \leq y_{r}^{\;\left(t\right)}\leq\max_{i,j}\left\{ \widetilde{\gamma}_{ijr}\right\} .
\]
\end{pro}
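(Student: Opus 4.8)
The plan is to read off from the coordinate form \eqref{eq:Op-V} of $V$ that, after one step, each new coordinate is a \emph{convex combination} of the relevant structure constants, and then to iterate. Concretely, fix $z=(x_1,\dots,x_n,y_1,\dots,y_\nu)\in S^{n,\nu}$. As recorded just before the statement, for $z\in\mathbb{R}_+^{n+\nu}$ we have $\varpi\circ W(z)=\bigl(\sum_{i}x_i\bigr)\bigl(\sum_{j}y_j\bigr)=0$ if and only if $z\in\mathcal{O}^{n,\nu}$; hence the number $D:=\bigl(\sum_{i=1}^{n}x_i\bigr)\bigl(\sum_{j=1}^{\nu}y_j\bigr)$ is strictly positive. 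Set $w_{ij}:=x_iy_j/D\ge 0$ for $1\le i\le n$, $1\le j\le\nu$; then $\sum_{i,j}w_{ij}=\bigl(\sum_i x_i\bigr)\bigl(\sum_j y_j\bigr)/D=1$, so $(w_{ij})$ is a probability vector. With $V(z)=(x_1',\dots,x_n',y_1',\dots,y_\nu')$, formula \eqref{eq:Op-V} reads $x_k'=\sum_{i,j}w_{ij}\gamma_{ijk}$ and $y_r'=\sum_{i,j}w_{ij}\widetilde{\gamma}_{ijr}$, i.e. each $x_k'$ (resp. $y_r'$) is a convex combination of the finite set $\{\gamma_{ijk}:1\le i\le n,\,1\le j\le\nu\}$ (resp. $\{\widetilde{\gamma}_{ijr}\}$). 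Since a convex combination of real numbers lies between their minimum and maximum, this gives $\min_{i,j}\{\gamma_{ijk}\}\le x_k'\le\max_{i,j}\{\gamma_{ijk}\}$ and $\min_{i,j}\{\widetilde{\gamma}_{ijr}\}\le y_r'\le\max_{i,j}\{\widetilde{\gamma}_{ijr}\}$, which is the proposition for $t=1$.

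For general $t$ I would induct on $t$. Since (as established when $V$ was introduced) $V$ is a self-map of $S^{n,\nu}$, the iterate $z^{(t-1)}=V^{t-1}(z)$ again lies in $S^{n,\nu}$; applying the one-step estimate above with $z^{(t-1)}$ in place of $z$ bounds the coordinates of $z^{(t)}=V\bigl(z^{(t-1)}\bigr)$ by the same quantities $\min_{i,j}\{\gamma_{ijk}\}$, $\max_{i,j}\{\gamma_{ijk}\}$, $\min_{i,j}\{\widetilde{\gamma}_{ijr}\}$, $\max_{i,j}\{\widetilde{\gamma}_{ijr}\}$. Because these bounds do not depend on $t$, the estimate is uniform in $t$, exactly as claimed.

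I do not expect any real obstacle. The only point that needs a word of care is that the denominator $D=\varpi\circ W(z)$ must be nonzero in order for the weights $w_{ij}$ to form a genuine probability vector, and this is guaranteed precisely by restricting to $S^{n,\nu}=S^{n+\nu-1}\setminus\mathcal{O}^{n,\nu}$; the fact that the whole trajectory $\bigl(z^{(t)}\bigr)_{t\ge 0}$ stays in $S^{n,\nu}$, so that each step is of the form treated in the first paragraph, is nothing but the well-definedness of $V$ as a self-map of $S^{n,\nu}$ used in the induction.
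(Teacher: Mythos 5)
Your proof is correct and follows essentially the same route as the paper: the paper bounds the numerator $\sum_{i,j}\gamma_{ijk}x_i^{(t-1)}y_j^{(t-1)}$ between $\min_{i,j}\{\gamma_{ijk}\}$ and $\max_{i,j}\{\gamma_{ijk}\}$ times $\sum_{i,j}x_i^{(t-1)}y_j^{(t-1)}$ and then divides by the same quantity, which is exactly your convex-combination observation applied to $z^{(t-1)}\in S^{n,\nu}$. Your explicit remarks about the positivity of the denominator and the induction via $V$ being a self-map of $S^{n,\nu}$ just spell out what the paper leaves implicit.
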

\begin{proof}
It is easy to see that for each $1\leq k\leq n$ and $1\leq r\leq\nu$ the following inequalities hold
\[
\min_{i,j}\left\{ \gamma_{ijk}\right\} \Bigl(\sum_{i,j}x_{i}^{\:\left(t-1\right)}y_{j}^{\;\left(t-1\right)}\Bigr)\leq\sum_{i,j}\gamma_{ijk}x_{i}^{\:\left(t-1\right)}y_{j}^{\;\left(t-1\right)}\leq\max_{i,j}\left\{ \gamma_{ijk}\right\} \Bigl(\sum_{i,j}x_{i}^{\:\left(t-1\right)}y_{j}^{\;\left(t-1\right)}\Bigr)
\]
\[
\min_{i,j}\left\{ \widetilde{\gamma}_{ijr}\right\} \Bigl(\sum_{i,j}x_{i}^{\:\left(t-1\right)}y_{j}^{\;\left(t-1\right)}\Bigr)\leq\sum_{i,j}\widetilde{\gamma}_{ijr}x_{i}^{\:\left(t-1\right)}y_{j}^{\;\left(t-1\right)}\leq\max_{i,j}\left\{ \widetilde{\gamma}_{ijr}\right\} \Bigl(\sum_{i,j}x_{i}^{\:\left(t-1\right)}y_{j}^{\;\left(t-1\right)}\Bigr),
\]
therefore the result follows using relations (\ref{eq:Op-V}).
\end{proof}
We can study the action of an algebra-isomorphism on normalized
gonosomal operators.
\begin{pro}
If $A_{1}$ and $A_{2}$ are gonosomal stochastic algebras, $\varpi_{1}$
and $\varpi_{2}$ the linear forms defined on $A_{1}$ and $A_{2}$
as in (\ref{eq:form_lin_pi_def}) and if $\varphi:A_{1}\rightarrow A_{2}$
is an algebra-isomorphism such that $\varpi_{2}\circ\varphi=\varpi_{1}$
then we have $V_{2}=\varphi\circ V_{1}\circ\varphi^{-1}$. \end{pro}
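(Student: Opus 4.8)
The plan is to reduce the statement to the intertwining relation for the \emph{unnormalized} operators already established in Proposition \ref{prop:Equivalent_W}, namely $\varphi\circ W_{1}=W_{2}\circ\varphi$, and then to pull the normalizing scalar out through $\varphi$, using that $\varphi$ is $\mathbb{R}$-linear together with the hypothesis $\varpi_{2}\circ\varphi=\varpi_{1}$.

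The computation is short. Fix $z$ in the domain of $V_{2}$ and set $w=\varphi^{-1}(z)$. From the intrinsic definition $V_{1}(w)=\frac{1}{\varpi_{1}(W_{1}(w))}W_{1}(w)$, the linearity of $\varphi$, and $\varphi\circ W_{1}=W_{2}\circ\varphi$, I would write
\[
\varphi\bigl(V_{1}(w)\bigr)=\frac{1}{\varpi_{1}\left(W_{1}(w)\right)}\,\varphi\bigl(W_{1}(w)\bigr)=\frac{1}{\varpi_{1}\left(W_{1}(w)\right)}\,W_{2}\bigl(\varphi(w)\bigr)=\frac{1}{\varpi_{1}\left(W_{1}(w)\right)}\,W_{2}(z).
\]
For the normalizing scalar, the hypothesis $\varpi_{2}\circ\varphi=\varpi_{1}$ and Proposition \ref{prop:Equivalent_W} give
\[
\varpi_{1}\bigl(W_{1}(w)\bigr)=\varpi_{2}\bigl(\varphi(W_{1}(w))\bigr)=\varpi_{2}\bigl(W_{2}(\varphi(w))\bigr)=\varpi_{2}\bigl(W_{2}(z)\bigr).
\]
Substituting, $\varphi\bigl(V_{1}(\varphi^{-1}(z))\bigr)=\frac{1}{\varpi_{2}(W_{2}(z))}\,W_{2}(z)=V_{2}(z)$, which is exactly the asserted identity $V_{2}=\varphi\circ V_{1}\circ\varphi^{-1}$.

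Before this I would verify that the two sides are defined on the same set, i.e.\ that $\varphi$ carries the domain $S^{\:n,\nu}$ of $V_{1}$ onto the corresponding domain of $V_{2}$ (up to the exchange $n\leftrightarrow\nu$ permitted by Proposition \ref{prop:Iso-gonosomal-algebra}). Being linear with $\varpi_{2}\circ\varphi=\varpi_{1}$, the map $\varphi$ sends $\{\varpi_{1}=1\}$ onto $\{\varpi_{2}=1\}$; in the stochastic setting it sends the non-negative cone onto the non-negative cone; and it sends $S^{\:n+\nu-1}\setminus\mathcal{O}^{\:n,\nu}$ onto $S^{\:n+\nu-1}\setminus\mathcal{O}^{\:n,\nu}$, because on $\mathbb{R}_{+}^{n+\nu}$ the quantity $\varpi\circ W$ vanishes precisely on $\mathcal{O}^{\:n,\nu}$ (recalled just before the definition of $V$) while the scalar identity above shows $\varpi_{1}(W_{1}(\varphi^{-1}(z)))=\varpi_{2}(W_{2}(z))$.

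I expect this domain check, rather than the computation, to be the only genuine obstacle: the algebra is purely formal once $\varphi\circ W_{1}=W_{2}\circ\varphi$ is available, so the point needing care is that $\varphi$ really restricts to a bijection between the two copies of $S^{\:n,\nu}$, in particular that it preserves non-negativity of the coordinates — which is precisely where the assumption that $A_{1}$ and $A_{2}$ are gonosomal \emph{stochastic} algebras is used.
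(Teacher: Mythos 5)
Your proof is correct and follows essentially the same route as the paper: both reduce the statement to the intertwining relation $\varphi\circ W_{1}=W_{2}\circ\varphi$ of Proposition \ref{prop:Equivalent_W} and then use linearity of $\varphi$ together with $\varpi_{2}\circ\varphi=\varpi_{1}$ to transport the normalizing scalar, your identity $\varphi\bigl(V_{1}(\varphi^{-1}(z))\bigr)=V_{2}(z)$ being exactly the paper's computation $V_{2}\circ\varphi=\varphi\circ V_{1}$ read at $z=\varphi(w)$. The only departure is your unproved side remark that $\varphi$ maps the non-negative cone onto itself; this is not needed for the identity, since your scalar equality $\varpi_{1}\bigl(W_{1}(\varphi^{-1}(z))\bigr)=\varpi_{2}\bigl(W_{2}(z)\bigr)$ already ensures the denominators do not vanish at the relevant points, which is all the paper itself verifies.
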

\begin{proof}
According to Proposition \ref{prop:Equivalent_W} we have
$\varphi\circ W_{1}=W_{2}\circ\varphi$. It is easy to show that
for $z\in\mathbb{R}^{n+\nu}$ we get: $\varpi_{1}\circ W_{1}\left(z\right)=0\;\Leftrightarrow\;\varpi_{2}\circ W_{2}\left(z\right)=0$.
And for all $z\in S^{\:n,\nu}$ we get:
\begin{eqnarray*}
V_{2}\circ\varphi\left(z\right) & = & \frac{1}{\varpi_{2}\circ W_{2}\circ\varphi\left(z\right)}W_{2}\circ\varphi\left(z\right)=\frac{1}{\varpi_{2}\circ\varphi\circ W_{1}\left(z\right)}\varphi\circ W_{1}\left(z\right)\\
 & = & \frac{1}{\varpi_{1}\circ W_{1}\left(z\right)}\varphi\circ W_{1}\left(z\right)=\varphi\circ V_{1}\left(z\right).
\end{eqnarray*}
\end{proof}
\begin{pro}
\label{prop:W=000026O}In a gonosomal stochastic algebra of type
$\left(n,\nu\right)$:

a) If there is $t_{0}\geq1$ such that $W^{t_{0}}\bigl(z\bigr)=0$
then $W^{t}\bigl(z\bigr)=0$ for all $t\geq t_{0}$.

b) If there is $t\geq0$ such that $W^{t}\left(z\right)\in\mathcal{O}^{\:n,\nu}$
then $W^{t+1}\left(z\right)=0$.

c) For $z\in\mathbb{R}_{+}^{n+\nu}$ and $t\geq0$ we have $W^{t}\left(z\right)\in\mathcal{O}^{\:n,\nu}\;\Leftrightarrow\;\varpi\circ W^{t+1}\left(z\right)=0$.

d) For $z\in\mathbb{R}_{+}^{n+\nu}$, $z\neq0$, if $W^{t}\left(z\right)=0$
then there is $0\leq t_{0}<t$ such that $W^{t_{0}}\left(z\right)\neq0$
and $W^{t_{0}}\left(z\right)\in\mathcal{O}^{\:n,\nu}$.

e) For all $z\in S^{\:n,\nu}$ and $t\geq0$ such that $\varpi\circ W^{t}\left(z\right)\neq0$
we have:
\[
V^{t}\left(z\right)=\frac{1}{\varpi\circ W^{t}\left(z\right)}W^{t}\left(z\right).
\]
\end{pro}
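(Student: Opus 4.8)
The plan is to prove the five items in the stated order, since c) is needed for d) and e), and a), b), c) together are needed for e). The two facts doing the real work are the identity $\varpi\circ W(z)=\bigl(\sum_i x_i\bigr)\bigl(\sum_j y_j\bigr)$ recorded in \eqref{omega} (the algebraic counterpart of \eqref{eq:Eff-Tot}) and the inclusion $W(\mathbb{R}_+^{n+\nu})\subset\mathbb{R}_+^{n+\nu}$ valid in a stochastic algebra (part~a) of Proposition~\ref{pro}), so that every iterate $W^t(z)$ of a point $z\in\mathbb{R}_+^{n+\nu}$ keeps non-negative coordinates. For a) I would simply note that the coordinate form \eqref{eq:Op-W} gives $W(0)=0$, whence $W^t(z)=W^{t-t_0}(W^{t_0}(z))=W^{t-t_0}(0)=0$ for all $t\ge t_0$. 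For b), writing $W^t(z)=(x^{(t)},y^{(t)})$, membership in $\mathcal{O}^{\,n,\nu}$ means $x^{(t)}=0$ or $y^{(t)}=0$, so every monomial $x_i^{(t)}y_j^{(t)}$ occurring in \eqref{eq:Op-W} vanishes and $W^{t+1}(z)=W(W^t(z))=0$. For c), the point $W^t(z)=(x^{(t)},y^{(t)})$ has non-negative entries by part~a) of Proposition~\ref{pro}, and \eqref{omega} gives $\varpi\circ W^{t+1}(z)=\bigl(\sum_i x_i^{(t)}\bigr)\bigl(\sum_j y_j^{(t)}\bigr)$; a product of two non-negative reals vanishes iff a factor does, and a finite sum of non-negative reals vanishes iff each summand does, so $\varpi\circ W^{t+1}(z)=0$ iff $x^{(t)}=0$ or $y^{(t)}=0$, i.e., iff $W^t(z)\in\mathcal{O}^{\,n,\nu}$. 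For d), I would take $s$ to be the least index with $W^s(z)=0$; since $z\ne0$ one has $1\le s\le t$, so $t_0:=s-1$ satisfies $0\le t_0<t$, $W^{t_0}(z)\ne0$ by minimality, and $W^{t_0+1}(z)=0$ forces $\varpi\circ W^{t_0+1}(z)=0$, hence $W^{t_0}(z)\in\mathcal{O}^{\,n,\nu}$ by c).

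For e) I would argue by induction on $t$, after two preliminary observations. First: if $\varpi\circ W^{t}(z)\ne0$, then $\varpi\circ W^{s}(z)\ne0$ for every $0\le s\le t$; indeed $\varpi(z)=1$ settles $s=0$, and if $\varpi\circ W^{s}(z)=0$ for some $1\le s\le t$ then c) gives $W^{s-1}(z)\in\mathcal{O}^{\,n,\nu}$, b) gives $W^{s}(z)=0$, and a) gives $W^{t}(z)=0$, contradicting the hypothesis. Second: $V$ is invariant under positive rescaling, that is, for $u\in\mathbb{R}^{n+\nu}$ with $\varpi(u)\ne0$ and $\varpi\circ W(u)\ne0$ one has $V(u/\varpi(u))=W(u)/(\varpi\circ W(u))$, because $W$ is quadratic and $\varpi$ linear, so the numerator and the denominator both acquire the factor $\varpi(u)^{-2}$, which cancels. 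The base case $t=0$ is then $V^0(z)=z=z/\varpi(z)=W^0(z)/(\varpi\circ W^0(z))$. Assuming the formula at $t$, suppose $\varpi\circ W^{t+1}(z)\ne0$; by the first observation $\varpi\circ W^{t}(z)\ne0$, so the induction hypothesis yields $V^t(z)=W^t(z)/(\varpi\circ W^t(z))=u/\varpi(u)$ with $u:=W^t(z)$; moreover $V^t(z)\in S^{\,n,\nu}$, since $V^t(z)\in\mathcal{O}^{\,n,\nu}$ would force $u\in\mathcal{O}^{\,n,\nu}$ and hence, by c), $\varpi\circ W^{t+1}(z)=0$; therefore $V^{t+1}(z)=V(V^t(z))$ is legitimate and, by the second observation, equals $W(u)/(\varpi\circ W(u))=W^{t+1}(z)/(\varpi\circ W^{t+1}(z))$, which closes the induction.

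The only genuinely delicate point is e): one must exclude the possibility that a $W$-orbit issued from $S^{\,n,\nu}$ meets $\mathcal{O}^{\,n,\nu}$ at some intermediate stage — equivalently, that $\varpi\circ W^{s}$ vanishes for some $s<t$ while $\varpi\circ W^{t}$ does not — and this is precisely what combining a), b) and c) forbids. Once that is settled, the degree-zero homogeneity of $V$ transports the normalization along the orbit, and everything else reduces to routine manipulation of the linear form $\varpi$ together with the non-negativity built into the stochastic hypothesis.
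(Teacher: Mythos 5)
Your proof is correct and follows essentially the same route as the paper: parts a)--d) are the same coordinate and minimal-index arguments, and part e) is the same induction exploiting that $W$ is homogeneous quadratic and $\varpi$ linear, so the normalization factor cancels along the orbit. The only difference is that you spell out two details the paper leaves implicit --- that $\varpi\circ W^{t+1}(z)\neq0$ forces $\varpi\circ W^{s}(z)\neq0$ for all $s\leq t$ (via a), b), c)), and that $V^{t}(z)$ stays in $S^{\:n,\nu}$ so $V$ may be applied again --- which is a welcome sharpening but not a different method.
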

\begin{proof}
\emph{a}) With $z^{\:\left(t\right)}=\left(x_{1}^{\:\left(t\right)},\ldots,x_{n}^{\:\left(t\right)},y_{1}^{\:\left(t\right)},\ldots,y_{n}^{\:\left(t\right)}\right)$,
from $W^{t_{0}}\bigl(z\bigr)=0$ we have $x_{i}^{\:\left(t_{0}\right)}=0$
and $y_{j}^{\:\left(t_{0}\right)}=0$ what implies according
to (\ref{eq:Op-W}): $x_{i}^{\:\left(t_{0}+1\right)}=0$ and
$y_{j}^{\:\left(t_{0}+1\right)}=0$ and the result follows by
induction.

\emph{b}) For  $W^{t}\left(z\right)=\left(x_{1},\ldots,x_{n},y_{1},\ldots,y_{\nu}\right)$,
if $x_{k}=0$ for all $1\leq k\leq n$ or $y_{r}=0$ and
$1\leq r\leq\nu$ then from relations (\ref{eq:Op-W}) we get
$x'_{k}=0$ and $y'_{r}=0$ and thus $W^{t+1}\left(z\right)=0$.

\emph{c}) Necessity follows from \emph{b}). For the sufficiency,
it is enough to see that $W^{t}\left(z\right)=\left(x_{1},\ldots,x_{n},y_{1},\ldots,y_{\nu}\right)$
implies $\varpi\circ W^{t+1}\left(z\right)=\left(\sum_{k=1}^{n}x_{k}\right)\left(\sum_{r=1}^{\nu}y_{r}\right)$,
therefore if $\varpi\circ W^{t+1}\left(z\right)=0$ then we get $\sum_{k=1}^{n}x_{k}=0$
or $\sum_{r=1}^{\nu}y_{r}=0$ and as $x_{k}\geq0$, $y_{r}\geq0$
for all $k$ and $r$ we have $W^{t}\left(z\right)\in\mathcal{O}^{\:n,\nu}$.

\emph{d}) Let $z\neq0$ and $t>0$. Let $t_{0}\geq0$
be the smallest integer such that $W^{t_{0}+1}\left(z\right)=0$,
thus $t_{0}+1\leq t$, from $\varpi\circ W^{t_{0}+1}\left(z\right)=0$
and \emph{c}) we deduce that $W^{t_{0}}\left(z\right)\in\mathcal{O}^{\:n,\nu}$.

\emph{e}) By induction on $t\geq0$. For $t\geq1$, suppose
that $\varpi\circ W^{t+1}\left(z\right)\neq0$ and that $V^{t}\left(z\right)=\frac{1}{\varpi\circ W^{t}\left(x\right)}W^{t}\left(z\right)$
then we have $W\Bigl(V^{t}\bigl(z\bigr)\Bigr)=\left(\frac{1}{\varpi\circ W^{t}\left(z\right)}\right)^{2}W^{t+1}\left(z\right)\;\left(*\right)$
from which it follows $\varpi\circ W\Bigl(V^{t}\bigl(z\bigr)\Bigr)=\left(\frac{1}{\varpi\circ W^{t}\left(z\right)}\right)^{2}\varpi\circ W^{t+1}\left(z\right)\neq0\;\left(**\right)$.
By definition of the operator $V$ we get
\[
V^{t+1}\left(z\right)=V\left(V^{t}\left(z\right)\right)=\frac{1}{\varpi\circ W\left(V^{t}\left(z\right)\right)}W\left(V^{t}\left(z\right)\right)
\]
what with $\left(*\right)$ and $\left(**\right)$ gives the
relation to the order $t+1$.\end{proof}
\begin{rk}
From a genetic point of view, the result \emph{a}) means that
in a bisexual population when a sex-linked gonosomal gene disappears
it does not reappear. Results \emph{b}) and \emph{c}) means that
all individuals of one sex disappear if and only if a gonosomal
gene disappears.
\end{rk}
\medskip{}

There is a relation between the fixed points of the operator
$V$ and some fixed points of $W$, for this we introduce the
following definition: a fixed point $z=\left(x_{1},\ldots,x_{n},y_{1},\ldots,y_{\nu}\right)$
of the gonosomal operator $W$ is non-negative and normalizable
if it satisfies the following conditions $x_{i},y_{j}\geq0$
and $\sum_{i=1}^{n}x_{i}+\sum_{j=1}^{\nu}y_{j}>0$. It has been
shown in \cite{Roz-Va} that
\begin{pro}
The map $z^{*}\mapsto\frac{1}{\varpi\left(z^{*}\right)}z^{*}$
is an one-to-one correspondence between the set of non-negative
and normalizable fixed point of $W$ and the set of fixed points
of the operator $V$.
\end{pro}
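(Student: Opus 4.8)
The plan is to exhibit an explicit inverse of the stated map and to check that both composites are the identity, relying on two structural facts: the operator $W$ is homogeneous of degree $2$, i.e. $W(\lambda z)=\lambda^{2}W(z)$ for all $\lambda\in\mathbb{R}$ and $z\in\mathbb{R}^{n+\nu}$ (clear from $W(z)=\tfrac12 z^{2}$, or directly from the coordinate form (\ref{eq:Op-W})), and the linear form $\varpi$ satisfies $\varpi(\lambda z)=\lambda\varpi(z)$. I will also use repeatedly that for $z=(x_{1},\dots,x_{n},y_{1},\dots,y_{\nu})$ one has $\varpi\circ W(z)=\bigl(\sum_{i}x_{i}\bigr)\bigl(\sum_{j}y_{j}\bigr)$, as in (\ref{omega}).

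\textbf{Forward direction.} Let $z^{*}$ be a non-negative normalizable fixed point of $W$, and set $u=\tfrac{1}{\varpi(z^{*})}z^{*}$; this is legitimate because normalizability means $\varpi(z^{*})>0$. Then $u$ has non-negative coordinates and $\varpi(u)=1$, so $u\in S^{\:n+\nu-1}$. Using homogeneity together with $W(z^{*})=z^{*}$ gives $W(u)=\tfrac{1}{\varpi(z^{*})^{2}}W(z^{*})=\tfrac{1}{\varpi(z^{*})^{2}}z^{*}$, hence $\varpi\circ W(u)=\tfrac{1}{\varpi(z^{*})^{2}}\varpi(z^{*})=\tfrac{1}{\varpi(z^{*})}\neq0$, so $u\notin\mathcal{O}^{\:n,\nu}$, i.e. $u\in S^{\:n,\nu}$. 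Finally $V(u)=\tfrac{1}{\varpi\circ W(u)}W(u)=\varpi(z^{*})\cdot\tfrac{1}{\varpi(z^{*})^{2}}z^{*}=\tfrac{1}{\varpi(z^{*})}z^{*}=u$, so $u$ is a fixed point of $V$.

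\textbf{The inverse map.} Let $u\in S^{\:n,\nu}$ be a fixed point of $V$. Put $c=\varpi\circ W(u)$; by definition of $S^{\:n,\nu}$ we have $c\neq0$, and since $u$ has non-negative coordinates, $c=\bigl(\sum_{i}x_{i}\bigr)\bigl(\sum_{j}y_{j}\bigr)>0$. Rearranging $V(u)=u$ yields $W(u)=c\,u$. Set $z^{*}=\tfrac{1}{c}u$. Homogeneity then gives $W(z^{*})=\tfrac{1}{c^{2}}W(u)=\tfrac{1}{c^{2}}\,c\,u=\tfrac{1}{c}u=z^{*}$, so $z^{*}$ is a fixed point of $W$; it has non-negative coordinates and $\varpi(z^{*})=\tfrac{1}{c}>0$, hence it is non-negative and normalizable. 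Moreover $\tfrac{1}{\varpi(z^{*})}z^{*}=c\cdot\tfrac{1}{c}u=u$, so $u$ is in the image of the stated map.

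\textbf{Mutual inverses.} Starting from a non-negative normalizable fixed point $z^{*}$ of $W$, the forward step produces $u=\tfrac{1}{\varpi(z^{*})}z^{*}$ with $\varpi\circ W(u)=\tfrac{1}{\varpi(z^{*})}$, so the inverse map returns $\tfrac{1}{\varpi\circ W(u)}u=\varpi(z^{*})\cdot\tfrac{1}{\varpi(z^{*})}z^{*}=z^{*}$. Conversely, from a fixed point $u$ of $V$ the inverse map gives $z^{*}=\tfrac{1}{c}u$ with $\varpi(z^{*})=\tfrac{1}{c}$, whence $\tfrac{1}{\varpi(z^{*})}z^{*}=c\cdot\tfrac{1}{c}u=u$. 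Thus $z^{*}\mapsto\tfrac{1}{\varpi(z^{*})}z^{*}$ is a bijection between the two sets, with inverse $u\mapsto\tfrac{1}{\varpi\circ W(u)}u$.

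Every computation here is a one-liner once degree-$2$ homogeneity of $W$ and linearity of $\varpi$ are on the table; the only point demanding genuine care is checking that the image of a normalizable fixed point of $W$ really lands in $S^{\:n,\nu}$ and not merely in $S^{\:n+\nu-1}$, which is exactly what $\varpi\circ W(u)=\varpi(z^{*})^{-1}\neq0$ secures, and symmetrically that $c=\varpi\circ W(u)>0$ so that $\tfrac{1}{c}u$ is a bona fide non-negative normalizable fixed point. These are the places where the hypotheses ``non-negative'' and ``normalizable'' are used.
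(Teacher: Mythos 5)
Your proof is correct and complete: the degree-$2$ homogeneity of $W$, the linearity of $\varpi$, and the identity $\varpi\circ W(z)=\bigl(\sum_i x_i\bigr)\bigl(\sum_j y_j\bigr)$ are exactly what is needed, and you verify both composites of the map $z^*\mapsto\frac{1}{\varpi(z^*)}z^*$ and its candidate inverse $u\mapsto\frac{1}{\varpi\circ W(u)}u$, including the point that genuinely requires care, namely that the image lands in $S^{\:n,\nu}$ (not merely $S^{\:n+\nu-1}$) and that $c=\varpi\circ W(u)>0$. The paper itself gives no proof here but defers to the cited reference \cite{Roz-Va}; your argument is the natural one for that result and can stand as a self-contained justification.
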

The various stability notions of the equilibrium points are preserved
by passing from $W$ to the operator $V$.
\begin{thm}
\label{prop:Notions_Equilibrium_pts}Let $z^{*}$ be a non-negative
and normalizable fixed point of $W$.

a) If $z^{*}$ is attractive then $\frac{1}{\varpi\left(z^{*}\right)}z^{*}$
is an attractive equilibrium point of $V$.

b) If $z^{*}$ is stable (resp. uniformly stable) then $\frac{1}{\varpi\left(z^{*}\right)}z^{*}$
is a stable (resp. uniformly stable) equilibrium point of $V$.

c) If $z^{*}$ is asymptotically stable then the fixed point
$\frac{1}{\varpi\left(z^{*}\right)}z^{*}$ of $V$ is asymptotically
stable.

d) If $z^{*}$ is exponentially stable then the fixed point $\frac{1}{\varpi\left(z^{*}\right)}z^{*}$
of $V$ is exponentially stable.\end{thm}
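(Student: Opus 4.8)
The plan is to exploit the semiconjugacy between $W$ and $V$ that is essentially encoded in part (e) of Proposition~9 (the one numbered \ref{prop:W=000026O} in the paper). Write $\pi=\varpi$ for brevity and set $c=\pi(z^{*})$, $u^{*}=\frac{1}{c}z^{*}$. Since $z^{*}$ is a fixed point of $W$ with non-negative components and $\pi(z^{*})>0$, Proposition~\ref{prop:W=000026O}(e) gives for any $z$ in a small enough neighbourhood of $z^{*}$ (so that $\pi\circ W^{t}(z)\neq0$ for all $t$, which holds by continuity near the fixed point) the identity
\[
V^{t}\!\left(\tfrac{1}{\pi(z)}z\right)=\frac{1}{\pi\circ W^{t}(z)}\,W^{t}(z).
\]
Equivalently, defining the radial retraction $\Pi(z)=\frac{1}{\pi(z)}z$ onto the simplex, one has $V^{t}\circ\Pi=\Pi\circ W^{t}$ on a neighbourhood of $z^{*}$, because $\Pi$ is invariant under multiplication by positive scalars and $W^{t}(z)$ has $\pi$-value $\pi\circ W^{t}(z)>0$. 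Thus $\Pi$ intertwines the two dynamics, it maps $z^{*}$ to $u^{*}$, and — this is the geometric point — $\Pi$ is a locally bi-Lipschitz submersion from a neighbourhood of $z^{*}$ in $\mathbb{R}^{n+\nu}_{+}$ onto a neighbourhood of $u^{*}$ in the simplex, with a local Lipschitz section given by $u\mapsto c\,u$ restricted to the simplex (indeed $\Pi(c\,u)=u$ for $u$ in the simplex). So every trajectory of $V$ near $u^{*}$ is the $\Pi$-image of a trajectory of $W$ near $z^{*}$, namely the one started at $c$ times the initial point.

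From here each of the four items is a routine unwinding of the relevant $\varepsilon$–$\delta$ definition through the bi-Lipschitz map $\Pi$ and its section. For (a), attractivity: if $W^{t}(z)\to z^{*}$ for all $z$ in a neighbourhood $U$ of $z^{*}$, then for $u$ near $u^{*}$ put $z=c\,u\in U$; continuity of $\Pi$ gives $V^{t}(u)=\Pi(W^{t}(z))\to\Pi(z^{*})=u^{*}$. For (b), stability: given $\varepsilon>0$, use the Lipschitz constant $L_{\Pi}$ of $\Pi$ near $z^{*}$ to pick the $W$-stability radius for $\varepsilon/L_{\Pi}$; then shrink it by the Lipschitz constant of the section $u\mapsto c\,u$ to get the $V$-stability radius for $\varepsilon$. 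Uniform stability is identical since the estimates are uniform in the base point once we are in a fixed neighbourhood. For (c), asymptotic stability = stability + attractivity, so combine (a) and (b). For (d), exponential stability: if $\|W^{t}(z)-z^{*}\|\le M e^{-\lambda t}\|z-z^{*}\|$ near $z^{*}$, then
\[
\|V^{t}(u)-u^{*}\|=\|\Pi(W^{t}(cu))-\Pi(z^{*})\|\le L_{\Pi}\|W^{t}(cu)-z^{*}\|\le L_{\Pi}M e^{-\lambda t}\|cu-z^{*}\|\le L_{\Pi}M c\,e^{-\lambda t}\|u-u^{*}\|,
\]
using $\|cu-z^{*}\|=c\|u-u^{*}\|$; so the exponential rate $\lambda$ is preserved with a modified constant.

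The one place that needs genuine care — and the step I expect to be the main obstacle — is making the semiconjugacy $V^{t}\circ\Pi=\Pi\circ W^{t}$ hold on a full neighbourhood of $z^{*}$, not just pointwise: one must check that for $z$ close enough to $z^{*}$ none of the denominators $\pi\circ W^{t}(z)$ vanishes, so that $V^{t}$ is actually defined along the whole orbit and Proposition~\ref{prop:W=000026O}(e) applies for every $t$. This follows from continuity of $W^{t}$ and the fact that $\pi\circ W(z^{*})=c^{2}/c\cdot$\,(something positive)$=c>0$ forces $\pi\circ W^{t}(z^{*})=c>0$ for all $t$, so by continuity $\pi\circ W^{t}(z)>c/2$ on a neighbourhood $U_{t}$; the subtlety is that for attractivity (unbounded $t$) one needs a single neighbourhood working for all $t$, which is supplied by the attractivity hypothesis itself (the orbit stays in a fixed compact neighbourhood of $z^{*}$ where $\pi\circ W$ is bounded below) — for (b), (c), (d) the stability hypothesis plays the same role. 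Everything else is the elementary bookkeeping of Lipschitz constants indicated above, so I would state the semiconjugacy as a lemma, verify the non-vanishing of denominators, and then dispatch (a)–(d) in a few lines each.
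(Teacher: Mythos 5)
Your proposal is correct in substance but follows a genuinely different route from the paper. You organize everything around the semiconjugacy $V^{t}\circ\Pi=\Pi\circ W^{t}$ for the radial projection $\Pi(z)=\frac{1}{\varpi(z)}z$ --- which rests on Proposition \ref{prop:W=000026O}(e) together with the (implicit, but easy) homogeneity $W^{t}(\lambda z)=\lambda^{2^{t}}W^{t}(z)$ --- and around the Lipschitz section $u\mapsto\varpi(z^{*})u$, so that neighbourhoods of $u^{*}=\frac{1}{\varpi(z^{*})}z^{*}$ in the simplex are pulled back to neighbourhoods of $z^{*}$ and the $\varepsilon$--$\delta$ estimates are transported through the Lipschitz constant $L_{\Pi}$ of $\Pi$ near $z^{*}$ (finite because $\varpi\geq4>0$ there, by Proposition \ref{prop:PtfixeW}). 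The paper introduces neither $\Pi$ nor a section: in (a) it applies Proposition \ref{prop:W=000026O}(e) to simplex points $z$ with $\Vert z-z^{*}\Vert<\rho$ and passes to the limit, and in (b)--(d) it estimates $\Vert V^{t}(z)-V(z^{*})\Vert$ by a direct triangle inequality, using $\vert\varpi\circ W^{t}(z)-\varpi(z^{*})\vert\leq\Vert W^{t}(z)-z^{*}\Vert$ in the $\ell^{1}$ norm and $\Vert z^{*}\Vert=\varpi(z^{*})\geq4$ to keep the denominators away from zero; note that its neighbourhoods are centred at $z^{*}$ itself (intersected with the simplex), whereas yours are centred at $u^{*}$ and pulled back through $u\mapsto\varpi(z^{*})u$, which matches the standard formulation of stability for the fixed point of $V$ more directly, at the price of the bookkeeping with $L_{\Pi}$ and the factor $c=\varpi(z^{*})$.

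One step of yours does need repair: the non-vanishing of the denominators $\varpi\circ W^{t}(cu)$ under the attractivity hypothesis alone. Attractivity does not force the orbit to remain in a prescribed small neighbourhood of $z^{*}$ for all times, and the compactness argument you sketch is circular --- to bound $\varpi\circ W$ below on the closure of the orbit you must already know it is positive at every orbit point, which is exactly the claim at issue. The clean argument is the one the paper uses, via Proposition \ref{prop:W=000026O}(a)--(c): since $cu\in\mathbb{R}_{+}^{n+\nu}$, if $\varpi\circ W^{t_{0}}(cu)=0$ for some $t_{0}$ then $W^{t}(cu)=0$ for all subsequent $t$, contradicting $W^{t}(cu)\rightarrow z^{*}\neq0$; hence every denominator is positive and Proposition \ref{prop:W=000026O}(e) applies for every $t$. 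For (b)--(d) your remark is fine as it stands: stability confines the $W$-orbit to a ball of radius $\epsilon<\varpi(z^{*})$ around $z^{*}$, on which $\varpi\circ W^{t}(cu)\geq\varpi(z^{*})-\epsilon>0$. With that single repair, your parts (a)--(d) go through as outlined.
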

\begin{proof}
\emph{a}) If $z^{*}$ is an attractive point of $W$, then there is
$\rho>0$ such that for all $z\in\mathbb{R}^{n+\nu}$ verifying
$\left\Vert z-z^{*}\right\Vert <\rho$ we have $\lim_{t\rightarrow\infty}W^{t}\left(z\right)=z^{*}$.
As $z^{*}\neq0$ we get $\varpi\left(z^{*}\right)\neq0$. By
continuity of $\varpi$ we have $\lim_{t\rightarrow\infty}\varpi\circ W^{t}\left(z\right)=\varpi\left(z^{*}\right)$.
Next for all $z\in\mathbb{R}^{n+\nu}$ such that $\lim_{t\rightarrow\infty}W^{t}\left(z\right)=z^{*}$
we get $W^{t}\left(z\right)\neq0$ for every $t\geq0$, otherwise
according to Proposition \ref{prop:W=000026O} \emph{a}), we
would have $\lim_{t\rightarrow\infty}W^{t}\left(z\right)=0$,
we deduce that, in particular if $z\in S^{\:n+\nu-1}$ we get
$\varpi\circ W^{t}\left(z\right)\neq0$. Finally, for any $z\in S^{\:n+\nu-1}$
such that $\left\Vert z-z^{*}\right\Vert <\rho$ we get $\lim_{t\rightarrow\infty}V^{t}\left(z\right)=\lim_{t\rightarrow\infty}\frac{1}{\varpi\circ W^{t}\left(z\right)}W^{t}\left(z\right)=\frac{1}{\varpi\left(z^{*}\right)}z^{*}$.\smallskip{}

In the following $\mathbb{R}^{n+\nu}$ is equipped with the norm
$\left\Vert \left(x_{1},\ldots,x_{n+\nu}\right)\right\Vert =\sum_{i=1}^{n+\nu}\left|x_{i}\right|$
and we see that for this norm we have $\left\Vert z\right\Vert =\varpi\left(z\right)$
if $z\in\mathbb{R}_{+}^{n+\nu}$.\smallskip{}

\emph{b}) By definition, the equilibrium point $z^{*}$ is stable
for $W$ if for all $t_{0}\geq0$ and $\epsilon>0$, there exists
$\delta>0$ such that the condition $\left\Vert z-z^{*}\right\Vert <\delta$
implies $\left\Vert W^{t}\left(z\right)-z^{*}\right\Vert <\epsilon\;\left(t\geq t_{0}\right)$,
and $z^{*}$ is uniformy stable if the existence of $\delta>0$
does not depend on $t_{0}$.

We deduce from Proposition \ref{prop:PtfixeW} that $\varpi\left(z^{*}\right)-2>2$,
in what follows we take $0<\epsilon<\varpi\left(z^{*}\right)-2$.
For all $z\in S^{\:n,\nu}$ we get
\[
\left\Vert V^{t}\left(z\right)-V\left(z^{*}\right)\right\Vert \leq\left\Vert \tfrac{1}{\varpi\circ W^{t}\left(z\right)}W^{t}\left(z\right)-\tfrac{1}{\varpi\circ W^{t}\left(z\right)}z^{*}\right\Vert +\left\Vert \tfrac{1}{\varpi\circ W^{t}\left(z\right)}z^{*}-\tfrac{1}{\varpi\left(z^{*}\right)}z^{*}\right\Vert
\]
or
\begin{equation}
\left\Vert V^{t}\left(z\right)-V\left(z^{*}\right)\right\Vert \leq\tfrac{1}{\varpi\circ W^{t}\left(z\right)}\left\Vert W^{t}\left(z\right)-z^{*}\right\Vert +\left|\tfrac{1}{\varpi\circ W^{t}\left(z\right)}-\tfrac{1}{\varpi\left(z^{*}\right)}\right|\left\Vert z^{*}\right\Vert .\label{eq:inegal1}
\end{equation}
If we denote $W^{t}\left(z\right)=\bigl(x_{i}^{\:\left(t\right)}\bigr)_{1\leq i\leq n+\nu}$
and $z^{*}=\left(x_{i}^{*}\right)_{1\leq i\leq n+\nu}$ we notice
that
\[
\bigl|\varpi\circ W^{t}\left(z\right)-\varpi\left(z^{*}\right)\bigr|\leq\sum_{i=1}^{n+\nu}\bigl|x_{i}^{\:\left(t\right)}-x_{i}^{*}\bigr|=\left\Vert W^{t}\left(z\right)-z^{*}\right\Vert ,
\]
we deduce that for all $x\in S^{\:n,\nu}$ such that $\left\Vert z-z^{*}\right\Vert <\delta$
we have $0<\varpi\left(z^{*}\right)-\epsilon\leq\varpi\circ W^{t}\left(z\right)$,
with this and $\left\Vert z^{*}\right\Vert =\varpi\left(z^{*}\right)$
inequality (\ref{eq:inegal1}) becomes
\[
\left\Vert V^{t}\left(z\right)-V\left(z^{*}\right)\right\Vert \leq\tfrac{2\epsilon}{\varpi\left(z^{*}\right)-\epsilon}<\epsilon
\]
which proves the result.

\emph{c}) If $z^{*}$ is asymptotically stable for $W$, then
by definition $z^{*}$ is attractive and stable for $W$ but
from \emph{a}) and \emph{b}) it follows that $z^{*}$ is attractive
and stable for $V$, thus $z^{*}$ is asymptotically stable for
$V$.

\emph{d}) By definition, the equilibrium point $z^{*}$ of $W$
is exponentially stable if for all $t_{0}\geq0$ there exists
$\delta>0$, $M>0$ and $\eta\in\left]0,1\right[$ such that
for $z\in\mathbb{R}^{n+\nu}$ :
\[
\left\Vert z-z^{*}\right\Vert \leq\delta\Rightarrow\left\Vert W^{t}\left(z\right)-z^{*}\right\Vert \leq M\eta^{t}\left\Vert z-z^{*}\right\Vert ,\;\mbox{for all }t\geq t_{0}.
\]
 Analogously to what was done in \emph{b}), for all $z\in S^{\:n,\nu}$
we have the inequality:
\begin{equation}
\left\Vert V^{t}\left(z\right)-V\left(z^{*}\right)\right\Vert \leq\tfrac{1}{\varpi\circ W^{t}\left(z\right)}\left\Vert W^{t}\left(z\right)-z^{*}\right\Vert +\left|\tfrac{1}{\varpi\circ W^{t}\left(z\right)}-\tfrac{1}{\varpi\left(z^{*}\right)}\right|\left\Vert z^{*}\right\Vert .\label{eq:inegal2}
\end{equation}

As in \emph{b}) we get: $\bigl|\varpi\circ W^{t}\left(z\right)-\varpi\left(z^{*}\right)\bigr|\leq\left\Vert W^{t}\left(z\right)-z^{*}\right\Vert $,
we deduce that for all $z\in S^{\:n,\nu}$ verifying $\left\Vert z-z^{*}\right\Vert \leq\delta$
we get $\varpi\left(z^{*}\right)-M\eta^{t}\left\Vert z-z^{*}\right\Vert \leq\varpi\circ W^{t}\left(z\right)$.
But $\eta\in\left]0,1\right[$, thus there exists $t_{1}\geq t_{0}$
such that $4-M\eta^{t}\left\Vert z-z^{*}\right\Vert \geq2$ for
$t\geq t_{1}$, but we saw in Proposition \ref{prop:PtfixeW}
that $\varpi\left(z^{*}\right)\geq4$, thus for all $z\in S^{\:n,\nu}$
such that $\left\Vert z-z^{*}\right\Vert \leq\delta$ and for
every $t\geq t_{1}$ we have
\[
2\leq\varpi\left(z^{*}\right)-M\eta^{t}\left\Vert z-z^{*}\right\Vert \leq\varpi\circ W^{t}\left(z\right)
\]
with this and $\left\Vert z^{*}\right\Vert =\varpi\left(z^{*}\right)$,
inequality (\ref{eq:inegal2}) becomes
\[
\left\Vert V^{t}\left(z\right)-V\left(z^{*}\right)\right\Vert \leq\frac{2M\eta^{t}\left\Vert z-z^{*}\right\Vert }{\varpi\left(z^{*}\right)-M\eta^{t}\left\Vert z-z^{*}\right\Vert }\leq M\eta^{t}\left\Vert z-z^{*}\right\Vert ,\mbox{ for all }t\geq t_{1},
\]
which proves that $x^{*}$ is an exponentially stable point for
$V$.
\end{proof}

\section{Dynamical systems of diallelic gonosomal lethal genetic disorders }

A genetic disease is a disease caused by a mutation on a gene,
it is gonosomal (resp. autosomal) if the locus of the mutated
gene is gonosomal (resp. autosomal or pseudo-autosomal). A genetic
disease is said to be dominant or recessive if the mutant allele
is dominant or recessive. In gonosomal disease case, dominance
plays a role only in homogametic sex individuals, that is to
say carrying two similar gonosomes, heterogametic sex individuals
with the mutant allele will be sick in any event that the allele
is dominant or recessive. Finally an allele is lethal if it causes
the death of a carrier when this allele is dominantan d the death
of a homozygous individual when this allele is recessive.

\medskip{}

In what follows we consider a gonosomal diallelic genetic disease
with one lethal allele in the $XY$ sex determination system,
according to the dominant or recessive nature of the lethal allele
there are six types of gonosomal algebras corresponding to the
cases given in the table below:

\begin{center}
\begin{tabular}{cc|cc}
\cline{3-4}
 & \multicolumn{1}{c}{} & \multicolumn{2}{c}{\Male}\tabularnewline
 & \multicolumn{1}{c}{} & lethal & non-lethal\tabularnewline
\hline
 & lethal dominant & $\left(1,1\right)$ & $\left(1,2\right)$\tabularnewline
\Female & lethal recessive & $\left(2,1\right)$ & $\left(2,2\right)$\tabularnewline
 & non-lethal & $\left(3,1\right)$ & $\left(3,2\right)$\tabularnewline
\hline
\end{tabular}\medskip{}

\par\end{center}

In the following we denote by $X^{*}$ a gonosome $X$ bringing
the lethal allele.

\subsection{Asymptotic behavior of trajectories in the case (\Female \ lethal
dominant, \Male \ lethal)}

\textcompwordmark{}

In this case, genotypes $XX^{*}$, $X^{*}X^{*}$ and $X^{*}Y$
are lethal, only the two genotypes $XX$ and $XY$ are observed
in the population. The gonosomal algebra associated with this
situation is defined on the basis $\left(e,\widetilde{e}\right)$
by: $e\widetilde{e}=\gamma e+\left(1-\gamma\right)\widetilde{e}$,
it is stochastic if $0<\gamma<1$.
\begin{pro}
The gonosomal operator $W$ associated with the gonosomal algebra
$\mathbb{R}\left\langle e,\widetilde{e}\right\rangle $ defined
below has two fixed points : $\left(0,0\right)$ and $\left(\frac{1}{1-\gamma},\frac{1}{\gamma}\right)$,
$\gamma\neq0,1$.\end{pro}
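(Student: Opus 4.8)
The plan is to compute the gonosomal operator $W$ explicitly from the structure constants of the algebra $\mathbb{R}\langle e,\widetilde{e}\rangle$ and then directly solve the fixed-point equations $W(z)=z$. Writing $z = x e + y\widetilde e$, the single nonzero product $e\widetilde e = \gamma e + (1-\gamma)\widetilde e$ together with $ee = \widetilde e\widetilde e = 0$ gives $z^2 = 2xy\,(\gamma e + (1-\gamma)\widetilde e)$, so by \eqref{eq:Op_W_def} the operator $W(z) = \tfrac12 z^2$ is, in coordinates,
\begin{equation*}
W: \quad x' = \gamma\, xy, \qquad y' = (1-\gamma)\, xy.
\end{equation*}
This matches the general form \eqref{eq:Op-W} with $n=\nu=1$, $\gamma_{111}=\gamma$, $\widetilde\gamma_{111}=1-\gamma$.

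Next I would set up and solve the system $x = \gamma xy$, $y = (1-\gamma)xy$. First I would dispose of the degenerate cases: if $x = 0$ then $y = (1-\gamma)\cdot 0\cdot y = 0$, and symmetrically if $y=0$ then $x=0$; this yields the fixed point $(0,0)$. Then, assuming $x\neq 0$ and $y\neq 0$, I can divide the first equation by $x$ and the second by $y$, obtaining $\gamma y = 1$ and $(1-\gamma)x = 1$. Since $\gamma\neq 0,1$ these give $y = \tfrac{1}{\gamma}$ and $x = \tfrac{1}{1-\gamma}$, which is precisely the second claimed fixed point $\left(\tfrac{1}{1-\gamma},\tfrac{1}{\gamma}\right)$. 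Conversely one checks by substitution that this point indeed satisfies $W(z)=z$: $\gamma xy = \gamma\cdot\tfrac{1}{1-\gamma}\cdot\tfrac{1}{\gamma} = \tfrac{1}{1-\gamma} = x$ and $(1-\gamma)xy = \tfrac{1}{\gamma} = y$.

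There is essentially no obstacle here; the statement is a short direct computation, and the only thing to be careful about is the case analysis on whether $x$ or $y$ vanishes (one must use the coupling of the two equations to conclude that one vanishing forces the other), plus recording the hypothesis $\gamma\neq 0,1$ which is exactly what makes the division legitimate and the second fixed point well defined. I would also remark in passing that this is consistent with Proposition \ref{prop:PtfixeW}: for the nonzero fixed point, $\varpi(z^*) = \tfrac{1}{1-\gamma} + \tfrac{1}{\gamma} = \tfrac{1}{\gamma(1-\gamma)} \geq 4$, with equality at $\gamma = \tfrac12$, in agreement with the bound $\varpi(z^*)\geq 4$.
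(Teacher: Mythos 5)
Your proposal is correct and follows essentially the same route as the paper: write $z=xe+y\widetilde e$, reduce $W(z)=z$ to the system $x=\gamma xy$, $y=(1-\gamma)xy$, handle the degenerate case where a coordinate vanishes, and divide in the nondegenerate case to obtain $y=\tfrac1\gamma$, $x=\tfrac1{1-\gamma}$. The only addition is your closing consistency check with Proposition \ref{prop:PtfixeW}, which is fine (and applies when $0<\gamma<1$, so that the nonzero fixed point has nonnegative components).
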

\begin{proof}
For $z\in\mathbb{R}\left\langle e,\widetilde{e}\right\rangle $,
$z=xe+y\widetilde{e}$ the relation $z=\frac{1}{2}z^{2}$ is
equivalent to
\[
\begin{cases}
x & =\;\gamma xy\\
y & =\;\left(1-\gamma\right)xy
\end{cases}
\]
or
\[
\begin{cases}
\left(1-\gamma y\right)x & =\;0\\
\left(1-\left(1-\gamma\right)x\right)y & =\;0.
\end{cases}
\]
If $\gamma=0$ or $\gamma=1$ we get immediately $\left(x,y\right)=\left(0,0\right)$.
If $\gamma\neq0,1$ it is clear that if $x=0$ then $y=0$ and
if $x\neq0$ we deduce from the first equation $y=\frac{1}{\gamma}$
with this the second equation gives $x=\frac{1}{1-\gamma}$.\end{proof}
\begin{pro}
\label{prop:LR/R}Concerning operators $W$, $V$ associated
with the gonosomal stochastic algebra $\mathbb{R}\left\langle e,\widetilde{e}\right\rangle $:
$e\widetilde{e}=\gamma e+\left(1-\gamma\right)\widetilde{e}$,
$\left(0<\gamma<1\right)$, we have for any initial point $z^{\left(0\right)}=\left(x^{\left(0\right)},y^{\left(0\right)}\right)\in\mathbb{R}^{2}$:
\begin{eqnarray*}
\lim_{t\rightarrow\infty}W^{t}\left(z^{\left(0\right)}\right) & = & \begin{cases}
\left(0,0\right) & \mbox{if }\left|x^{\left(0\right)}y^{\left(0\right)}\right|<\tfrac{1}{\gamma\left(1-\gamma\right)}\medskip\\
\left(\tfrac{1}{1-\gamma},\tfrac{1}{\gamma}\right) & \mbox{if }\left|x^{\left(0\right)}y^{\left(0\right)}\right|=\tfrac{1}{\gamma\left(1-\gamma\right)}\medskip\\
+\infty & \mbox{if }\left|x^{\left(0\right)}y^{\left(0\right)}\right|>\tfrac{1}{\gamma\left(1-\gamma\right)}
\end{cases}\\
V^{t}\left(z^{\left(0\right)}\right) & = & \left(\gamma,1-\gamma\right),\quad\left(\forall t\geq1\right).
\end{eqnarray*}
\end{pro}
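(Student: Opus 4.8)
The plan is to analyze the one-dimensional dynamics of the product $u^{(t)} = x^{(t)} y^{(t)}$ directly, since the operator $W$ here acts on $(x,y) \in \mathbb{R}^2$ by $W(x,y) = (\gamma xy,\ (1-\gamma)xy)$. First I would compute how the product evolves: if $(x',y') = W(x,y)$, then $x'y' = \gamma(1-\gamma)(xy)^2$, so setting $u = xy$ we get the scalar recursion $u' = \gamma(1-\gamma)u^2$. Writing $c = \gamma(1-\gamma) \in (0, \tfrac14]$, the map $g(u) = c u^2$ has fixed points $0$ and $1/c = \tfrac{1}{\gamma(1-\gamma)}$, and the standard analysis of $g$ gives: if $|u^{(0)}| < 1/c$ then $u^{(t)} \to 0$; if $|u^{(0)}| = 1/c$ then (noting $u^{(0)}$ must be positive, being a product whose image under one step is positive, or handling the sign bookkeeping directly) $u^{(t)} = 1/c$ for all $t\ge 1$; and if $|u^{(0)}| > 1/c$ then $u^{(t)} \to +\infty$. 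The threshold $\tfrac{1}{\gamma(1-\gamma)}$ in the statement is exactly $1/c$, which confirms this is the right reduction.

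Next I would lift this back to the pair $(x^{(t)}, y^{(t)})$. For $t \ge 1$ we have $x^{(t)} = \gamma\, u^{(t-1)}$ and $y^{(t)} = (1-\gamma)\, u^{(t-1)}$, so the behavior of $(x^{(t)}, y^{(t)})$ is entirely governed by $u^{(t-1)}$. In the subcritical case $u^{(t)} \to 0$ gives $(x^{(t)}, y^{(t)}) \to (0,0)$. In the critical case $u^{(t)} = 1/c$ for $t \ge 1$ forces $x^{(t)} = \gamma/c = \tfrac{1}{1-\gamma}$ and $y^{(t)} = (1-\gamma)/c = \tfrac{1}{\gamma}$ for all $t \ge 2$ (and one checks $t=1$ lands there too when $u^{(0)} = 1/c$), matching the second fixed point found in the previous proposition. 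In the supercritical case $u^{(t)} \to +\infty$ yields $x^{(t)}, y^{(t)} \to +\infty$; here I would just note that "$+\infty$" in the statement is shorthand for divergence of the trajectory.

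For the claim about $V$, recall $V(z) = \tfrac{1}{\varpi\circ W(z)} W(z)$ with $\varpi(x,y) = x+y$ on $\mathbb{R}^2_+$, so on the relevant domain $\varpi\circ W(x,y) = \gamma xy + (1-\gamma)xy = xy$. Hence $V(x,y) = \tfrac{1}{xy}(\gamma xy, (1-\gamma)xy) = (\gamma, 1-\gamma)$ whenever $xy \ne 0$, i.e. $V$ is constant on $S^{\,1,1}$ with value $(\gamma, 1-\gamma)$; in particular $V^{t}(z^{(0)}) = (\gamma, 1-\gamma)$ for all $t \ge 1$. One should also observe that $(\gamma, 1-\gamma)$ indeed lies in $S^{\,1,1}$ (both coordinates positive since $0 < \gamma < 1$, and their sum is $1$), so the iteration is well-defined and the identity propagates.

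The main obstacle is the sign/domain bookkeeping in the critical case of the $W$-dynamics: the statement is phrased for arbitrary $z^{(0)} \in \mathbb{R}^2$ using $|x^{(0)}y^{(0)}|$, so one must be careful that when $x^{(0)}y^{(0)} = -1/c$ the first iterate already has $u^{(1)} = c(x^{(0)}y^{(0)})^2 = 1/c > 0$ and thereafter stays at $1/c$, so convergence to $(\tfrac{1}{1-\gamma}, \tfrac{1}{\gamma})$ still holds; similarly the subcritical and supercritical cases depend only on $|x^{(0)}y^{(0)}|$ because the recursion for $u^{(1)}$ onward involves only $(u^{(0)})^2$. Once this case distinction is handled cleanly, the rest is the elementary one-dimensional analysis of $g(u) = \gamma(1-\gamma)u^2$ together with the substitutions above.
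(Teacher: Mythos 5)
Your proposal is correct and follows essentially the same route as the paper: the paper derives the closed form $x^{(t)}=\tfrac{1}{1-\gamma}\bigl[\gamma(1-\gamma)x^{(0)}y^{(0)}\bigr]^{2^{t}}$, $y^{(t)}=\tfrac{1}{\gamma}\bigl[\gamma(1-\gamma)x^{(0)}y^{(0)}\bigr]^{2^{t}}$, which is exactly the solved form of your scalar recursion $u'=\gamma(1-\gamma)u^{2}$ for the product $u=xy$, and the limit trichotomy in $|x^{(0)}y^{(0)}|$ versus $\tfrac{1}{\gamma(1-\gamma)}$ is read off identically. For the $V$-part you observe directly that $\varpi\circ W(x,y)=xy$ makes $V$ the constant map $(\gamma,1-\gamma)$ on its domain, which is a marginally more direct phrasing of the paper's step via the identity $V^{t}(z)=\tfrac{1}{\varpi\circ W^{t}(z)}W^{t}(z)$, but amounts to the same computation.
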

\begin{proof}
Let $z^{\left(t\right)}=W^{t}\bigl(z^{\left(0\right)}\bigr)=\bigl(x^{\left(t\right)},y^{\left(t\right)}\bigr)$.
We get
\[
\begin{cases}
x^{\left(t+1\right)} & =\;\gamma x^{\left(t\right)}y^{\left(t\right)}\\
y^{\left(t+1\right)} & =\left(1-\gamma\right)x^{\left(t\right)}y^{\left(t\right)}
\end{cases}
\]
from this we prove easily that for any $t\geq1$
\[
x^{\left(t\right)}=\frac{1}{1-\gamma}\left[\gamma\left(1-\gamma\right)x^{\left(0\right)}y^{\left(0\right)}\right]^{2^{t}}\mbox{ and }y^{\left(t\right)}=\frac{1}{\gamma}\left[\gamma\left(1-\gamma\right)x^{\left(0\right)}y^{\left(0\right)}\right]^{2^{t}},
\]
hence $\varpi\circ W^{t}\left(z^{\left(0\right)}\right)=\frac{1}{\gamma\left(1-\gamma\right)}\left[\gamma\left(1-\gamma\right)x^{\left(0\right)}y^{\left(0\right)}\right]^{2^{t}}$
and we use the result \emph{e}) of Proposition \ref{prop:W=000026O}.\end{proof}
\begin{rk}
In Proposition \ref{prop:Notions_Equilibrium_pts} the reciprocal
of the results are not true in general, indeed in the result
above the fixed point $\left(\frac{1}{1-\gamma},\frac{1}{\gamma}\right)$
is not stable for $W$ while its normalized $\left(\gamma,1-\gamma\right)$
is stable for $V$.
\end{rk}
\begin{flushleft}
\textbf{Application}: We consider a gonosomal diallelic gene
recessive lethal in females and lethal in males. We denote $0\leq\mu\leq1$
the mutation rate of the normal allele to the lethal in females
and $0\leq\eta\leq1$ the analogous rate in males. We assume
that in each individual mutation affects only one gonosome $X$
at a time, it follows that in gametogenesis we have: $XX\rightarrowtail\left(1-\mu\right)X+\mu X^{*}$,
$XY\rightarrowtail\frac{1-\eta}{2}X+\frac{\eta}{2}X^{*}+\frac{1}{2}Y$
and thus after reproduction $XX\times XY\rightarrowtail\frac{1-\eta}{2-\eta}XX+\frac{1}{2-\eta}XY$.
According to Proposition \ref{prop:LR/R} in each generation
the frequency distribution of a non-lethal allele is stationary
equal to $\left(\frac{1-\eta}{2-\eta},\frac{1}{2-\eta}\right)$,
we notice that it does not depend on the rate $\mu$ and the
frequency in females is lower than in males.
\par\end{flushleft}

\subsection{Asymptotic behavior of trajectories in the case (\Female \ lethal
recessive, \Male \ lethal)}

\textcompwordmark{}

In this case, genotypes $X^{*}X^{*}$ and $X^{*}Y$ are lethal,
thus we observe only $XX$, $XX^{*}$ and $XY$ types. Let $A$
be the gonosomal algebra of type $\left(2,1\right)$ with basis
$\left(e_{1},e_{2},e\right)$ defined by $e_{1}e=\gamma_{1}e_{1}+\gamma_{2}e_{2}+\gamma e$
and $e_{2}e=\delta_{1}e_{1}+\delta_{2}e_{2}+\delta e$ where
$\gamma_{i},\delta_{i}\geq0$ and $\gamma=1-\gamma_{1}-\gamma_{2},\delta=1-\delta_{1}-\delta_{2}$
with $\gamma,\delta\geq0$.

Let $W$ be the gonosomal operator $W$ associated to the gonosomal
algebra defined above. For $z^{\left(0\right)}=\bigl(x_{1}^{\left(0\right)},x_{2}^{\left(0\right)},y^{\left(0\right)}\bigr)$
consider  $z^{\left(t\right)}=W^{t}\bigl(z^{\left(0\right)}\bigr)$
where
\begin{equation}
W: \begin{cases}
x_{1}' & =\;\bigl(\gamma_{1}x_{1}+\delta_{1}x_{2}\bigr)y\medskip\\
x_{2}' & =\;\bigl(\gamma_{2}x_{1}+\delta_{2}x_{2}\bigr)y\medskip\\
y' & =\;\bigl(\gamma x_{1}+\delta x_{2}\bigr)y.
\end{cases}\label{eq:W_LR/L}
\end{equation}

\begin{pro} Let $Fix(W)$ be the set of fixed points of $W$.
In addition to the point $\left(0,0,0\right)$, the operator
$W$ has the following fixed points:

1) If $\gamma_{1}\delta_{2}-\gamma_{2}\delta_{1}=0$,

\begin{equation*}
Fix(W)\;=\;\begin{cases}
\left(\frac{1}{1-\gamma_1},0,\frac{1}{\gamma_{1}}\right), & if \ \ \gamma_{1}\ne0, \gamma_{1}\ne1, \delta_{2}=0, \gamma_2=0 \medskip\\
\left(0,\frac{1}{1-\delta_{2}}, \frac{1}{\delta_2}\right), & if \ \ \gamma_{1}=0, \delta_{2}\ne0, \delta_{2}\ne1, \delta_1=0 \medskip\\
\left(\frac{\gamma_{1}}{\left(\gamma_{1}+\gamma_{2}\right)\left(1-\gamma_{1}-\delta_{2}\right)},\frac{\gamma_{2}}{\left(\gamma_{1}+\gamma_{2}\right)\left(1-\gamma_{1}-\delta_{2}\right)},\frac{1}{\gamma_{1}+\delta_{2}}\right), & if \ \ \gamma_{1}\delta_{2}\neq0, \gamma_{1}+\delta_{2}\neq1, \gamma_{2}\delta_{1}\neq0.
\end{cases}
\end{equation*}

2) If $\gamma_{1}\delta_{2}-\gamma_{2}\delta_{1}\neq0$,\medskip{}

\begin{equation*}
Fix(W)\;=\;\begin{cases}
\left(\frac{\lambda}{1-\gamma_{1}},\frac{1-\lambda}{1-\gamma_{1}},\frac{1}{\gamma_{1}}\right),
\lambda\in\mathbb{R},  & if \ \ \gamma_{1}=\delta_{2}, \delta_{1}=0, \gamma_{2}=0 \medskip\\
\left(\frac{1}{1-\gamma_{1}},0,\frac{1}{\gamma_{1}}\right),
\left(0,\frac{1}{1-\delta_{2}},\frac{1}{\delta_{2}}\right),
 & if \ \ \gamma_{1}\neq\delta_{2}, \delta_{1}=0, \gamma_{2}=0 \medskip\\
\left(\frac{\gamma_{1}-\delta_{2}}{\left(1-\gamma_{1}\right)\left(\gamma_{1}+\gamma_{2}-\delta_{2}\right)},\frac{\gamma_{2}}{\left(1-\gamma_{1}\right)\left(\gamma_{1}+\gamma_{2}-\delta_{2}\right)},\frac{1}{\gamma_{1}}\right) , & if \ \ \delta_{1}=0, \gamma_{2}\neq0  \medskip\\
\left(0,\frac{1}{1-\delta_{2}},\frac{1}{\delta_{2}}\right), & if \ \ \delta_{1}=0, \gamma_{2}\neq0  \medskip\\
\left(\frac{\delta_{1}}{\left(1-\delta_{2}\right)\left(\delta_{1}+\delta_{2}-\gamma_{1}\right)},\frac{\delta_{2}-\gamma_{1}}{\left(1-\delta_{2}\right)\left(\delta_{1}+\delta_{2}-\gamma_{1}\right)},\frac{1}{\delta_{2}}\right) , & if \ \ \delta_{1}\neq0, \gamma_{2}=0  \medskip\\
\left(\frac{1}{1-\gamma_{1}},0,\frac{1}{\gamma_{1}}\right), & if \ \ \delta_{1}\neq0, \gamma_{2}=0  \medskip\\
\left(\frac{\delta_{1}y_{i}}{\left(\gamma\delta_{1}-\delta\gamma_{1}\right)y_{i}+\delta},\frac{1-\gamma_{1}y_{i}}{\left(\gamma\delta_{1}-\delta\gamma_{1}\right)y_{i}+\delta},y_{i}\right),
(i=1,2) &  if \ \ \delta_{1}\neq0, \gamma_{2}\neq0.
\end{cases}
\end{equation*}

where $y_{1}$ and $y_{2}$ are roots of $\left(\gamma_{1}\delta_{2}-\gamma_{2}\delta_{1}\right)y^{2}-\left(\gamma_{1}+\delta_{2}\right)y+1=0$.\end{pro}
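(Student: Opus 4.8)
The plan is to solve the fixed-point system $W(z)=z$ head-on. Writing $z=(x_1,x_2,y)$, equations \eqref{eq:W_LR/L} read $x_1=(\gamma_1x_1+\delta_1x_2)y$, $x_2=(\gamma_2x_1+\delta_2x_2)y$, $y=(\gamma x_1+\delta x_2)y$. The triple $(0,0,0)$ is visibly a solution, and if $y=0$ the first two equations force $x_1=x_2=0$; so from here on I would assume $y\neq0$. Then the third equation gives the affine constraint $\gamma x_1+\delta x_2=1$, while the first two become the homogeneous linear system $(1-\gamma_1y)x_1-\delta_1y\,x_2=0$, $-\gamma_2y\,x_1+(1-\delta_2y)x_2=0$ in $(x_1,x_2)$.

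Since $\gamma x_1+\delta x_2=1$ forces $(x_1,x_2)\neq(0,0)$, the linear system must be singular, i.e. $(1-\gamma_1y)(1-\delta_2y)-\gamma_2\delta_1y^2=0$, which simplifies to $(\gamma_1\delta_2-\gamma_2\delta_1)y^2-(\gamma_1+\delta_2)y+1=0$ — exactly the quadratic named in the statement. This equation is the organizing principle of the proof and dictates the top-level split: in part 1) its leading coefficient vanishes, so it is affine with the single root $y=1/(\gamma_1+\delta_2)$ when $\gamma_1+\delta_2\neq0$ (and inconsistent, hence no nonzero fixed point, when $\gamma_1+\delta_2=0$); in part 2) it is a genuine quadratic with roots $y_1,y_2$.

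For each admissible $y$ one then recovers $(x_1,x_2)$ and reads off the fixed point. Generically the first equation makes $(x_1,x_2)$ proportional to $(\delta_1y,\,1-\gamma_1y)$, and plugging this into $\gamma x_1+\delta x_2=1$ fixes the scalar as $\bigl((\gamma\delta_1-\delta\gamma_1)y+\delta\bigr)^{-1}$; this yields the generic formulas — the last line of part 2) directly, and the last line of part 1) after using $\gamma_1\delta_2=\gamma_2\delta_1$ to rewrite $(\delta_1,\delta_2)\propto(\gamma_1,\gamma_2)$. All remaining subcases are degenerations of this picture: if $\delta_1=0$ and $1-\gamma_1y=0$ the direction vector collapses and the system is governed by its other row, giving $x_2=0$ and $x_1=1/(1-\gamma_1)$, whence the points $\bigl(\frac{1}{1-\gamma_1},0,\frac{1}{\gamma_1}\bigr)$; if moreover $\delta_1=\gamma_2=0$ and $\gamma_1=\delta_2$ both rows are $0=0$, so the only surviving constraint is $\gamma(x_1+x_2)=1$, producing the one-parameter family $\bigl(\frac{\lambda}{1-\gamma_1},\frac{1-\lambda}{1-\gamma_1},\frac{1}{\gamma_1}\bigr)$; and the mirror versions of these come for free from the relabelling $e_1\leftrightarrow e_2$. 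That relabelling is a special case of Proposition \ref{prop:Iso-gonosomal-algebra}, acting by $(\gamma_1,\gamma_2,\delta_1,\delta_2)\mapsto(\delta_2,\delta_1,\gamma_2,\gamma_1)$ together with $x_1\leftrightarrow x_2$; it leaves $\gamma_1\delta_2-\gamma_2\delta_1$ invariant and sends each subcase of the statement to another, so I would invoke it to halve the casework.

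The main obstacle is purely organizational: one must check that the listed hypotheses exhaust all configurations of $(\gamma_1,\gamma_2,\delta_1,\delta_2)$ subject to $\gamma_i,\delta_i\ge0$ and $\gamma,\delta\ge0$ (with the understanding that the unlisted configurations contribute no nonzero fixed point), confirm that each displayed hypothesis is exactly what makes its formula well-defined (e.g. $\gamma_1\neq1$, $\gamma_1+\gamma_2-\delta_2\neq0$, the denominator $(\gamma\delta_1-\delta\gamma_1)y_i+\delta\neq0$, etc.), and verify in every branch that the affine constraint $\gamma x_1+\delta x_2=1$ is actually compatible, so that no candidate is spurious and none is missed. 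Since every step above is reversible, once this bookkeeping is complete each displayed triple is confirmed to be a genuine fixed point of $W$.
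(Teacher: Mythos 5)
Your proposal follows essentially the same route as the paper: pass to $y\neq0$, reduce the first two fixed-point equations to a singular $2\times2$ linear system whose vanishing determinant gives the quadratic $(\gamma_1\delta_2-\gamma_2\delta_1)y^2-(\gamma_1+\delta_2)y+1=0$, split on whether its leading coefficient vanishes, and in each branch recover $(x_1,x_2)$ from the kernel direction together with the normalization $\gamma x_1+\delta x_2=1$ — which is exactly the paper's argument, down to the subcases indexed by the vanishing of $\gamma_2$ and $\delta_1$. The only genuine (and harmless) deviation is your suggestion to use the $e_1\leftrightarrow e_2$ relabelling to halve the bookkeeping, which the paper instead carries out case by case; the remaining work you defer is indeed just that bookkeeping, and your outlined method handles it correctly.
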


\begin{proof}
Let us find the fixed points of $W$, for that we must solve
the system of equations:
\begin{equation}
\begin{cases}
x_{1} & =\;\bigl(\gamma_{1}x_{1}+\delta_{1}x_{2}\bigr)y\\
x_{2} & =\;\bigl(\gamma_{2}x_{1}+\delta_{2}x_{2}\bigr)y\\
y & =\;\bigl(\gamma x_{1}+\delta x_{2}\bigr)y
\end{cases}\label{eq:XLRec_PtFix}
\end{equation}

If $y=0$ we get the fixed point $\left(0,0,0\right)$.

If $y\neq0$ we write the system (\ref{eq:XLRec_PtFix}) in the
form:
\begin{equation}
\begin{cases}
\left(\gamma_{1}y-1\right)x_{1}+\left(\delta_{1}y\right)x_{2} & =\;0\\
\left(\gamma_{2}y\right)x_{1}+\left(\delta_{2}y-1\right)x_{2} & =\;0\\
\gamma x_{1}+\delta x_{2} & =\;1
\end{cases}\label{eq:XLRec_PtFix2}
\end{equation}
the determinant of the first two equations is necessarily zero,
thus
\begin{equation}
\left(\gamma_{1}\delta_{2}-\gamma_{2}\delta_{1}\right)y^{2}-\left(\gamma_{1}+\delta_{2}\right)y+1=0.\label{eq:XLRec_y}
\end{equation}
We consider two cases depending on the degree of the equation
(\ref{eq:XLRec_y}).

\textbf{Case-1}. If $\gamma_{1}\delta_{2}-\gamma_{2}\delta_{1}=0$ from (\ref{eq:XLRec_y})
we have $\gamma_{1}+\delta_{2}\neq0$, otherwise we have the unique fixed point $(0,0,0).$ Hence $y=\frac{1}{\gamma_{1}+\delta_{2}}$
then in (\ref{eq:XLRec_PtFix}) the first and second equations we get
 \begin{equation}\label{sys1}
\begin{cases}
\gamma_{2}x_{1}-\gamma_{1}x_{2} & =0\\
\delta_{2}x_{1}-\delta_{1}x_{2} & =0\\
\end{cases}
\end{equation}
Using this we get $\gamma x_{1}=\left(1-\gamma_{1}\right)x_{1}-\gamma_{1}x_{2}$
and $\delta x_{2}=\left(1-\delta_{2}\right)x_{1}-\delta_{2}x_{1}$
hence $\gamma x_{1}+\delta x_{2}=\left(1-\gamma_{1}-\delta_{2}\right)\left(x_{1}+x_{2}\right)=1$
consequently, if  $\gamma_{1}+\delta_{2}\neq1$ then $x_{1}+x_{2}=\frac{1}{1-\gamma_{1}-\delta_{2}}$. Of course, if  $\gamma_{1}+\delta_{2}=1$ then $\gamma x_{1}+\delta x_{2}=0$ and the system (\ref{eq:XLRec_PtFix2}) does not have any solution except $(0,0,0).$ So we consider the following subcases with condition $\gamma x_{1}+\delta x_{2}\ne0,1.$

Case 1.1. If $\gamma_{1}\ne0, \gamma_{1}\ne1, \delta_{2}=0, \gamma_{2}=0$, then from (\ref{sys1}) and taking into account $\gamma_{1}\delta_{2}-\gamma_{2}\delta_{1}=0$ we obtain the fixed point
$\left(\frac{1}{1-\gamma_1},0,\frac{1}{\gamma_{1}}\right);$

Case 1.2. If $\gamma_{1}=0, \delta_{2}\ne0, \delta_{2}\ne1, \delta_{1}=0$, while in the previous case, we obtain the next fixed point
$\left(0,\frac{1}{1-\delta_{2}}, \frac{1}{\delta_2}\right);$

Case 1.3. If $\gamma_{1}\ne0, \delta_{2}\ne0, \gamma_{1}+\delta_{2}\ne1, \gamma_2\ne0, \delta_{1}\ne0$, then from first equation of (\ref{sys1}) we get $x_2=\frac{\gamma_2 x_1}{\gamma_1}$ and then using $x_{1}+x_{2}=\frac{1}{1-\gamma_{1}-\delta_{2}}$ one has
$x_1=\frac{\gamma_{1}}{\left(\gamma_{1}+\gamma_{2}\right)\left(1-\gamma_{1}-\delta_{2}\right)},$ so $x_{2}=\frac{\gamma_{2}}{\left(\gamma_{1}+\gamma_{2}\right)\left(1-\gamma_{1}-\delta_{2}\right)}.$ Note that $\gamma_{1}\delta_{2}-\gamma_{2}\delta_{1}=0$, i.e., $\frac{\gamma_2}{\gamma_1}=\frac{\delta_2}{\delta_1}$ that we can get another equivalent fixed point form:
$x_{1}=\frac{\delta_{1}}{\left(\delta_{1}+\delta_{2}\right)\left(1-\gamma_{1}-\delta_{2}\right)}$
and $x_{2}=\frac{\delta_{2}}{\left(\delta_{1}+\delta_{2}\right)\left(1-\gamma_{1}-\delta_{2}\right)}$.

Note that for the other subcases the system (\ref{eq:XLRec_PtFix2}) has a unique trivial solution $(0,0,0).$

\medskip{}

\textbf{Case-2. }If $\gamma_{1}\delta_{2}-\gamma_{2}\delta_{1}\neq0$, the
discriminant of (\ref{eq:XLRec_y}) is $\Delta=\left(\gamma_{1}+\delta_{2}\right)^{2}-4\left(\gamma_{1}\delta_{2}-\gamma_{2}\delta_{1}\right)$
or $\Delta=\left(\gamma_{1}-\delta_{2}\right)^{2}+4\gamma_{2}\delta_{1}\geq0$.
Let $y_{1}$, $y_{2}$ be the roots of (\ref{eq:XLRec_y}).

If $\delta_{1}=0$ or $\gamma_{2}=0$ we have $\gamma_{1}\delta_{2}\ne0$ and the roots $y_{1}=\frac{1}{\gamma_{1}}$
and $y_{2}=\frac{1}{\delta_{2}}$.

Case 2.1. If $\delta_{1}=\gamma_{2}=0$ and $\gamma_{1}=\delta_{2}\ne1$
then $\gamma=\delta=1-\gamma_{1}$ and (\ref{eq:XLRec_PtFix2})
is reduced to $x_{1}+x_{2}=\frac{1}{1-\gamma_{1}}$ which results
to the fixed point $\left(\frac{\lambda}{1-\gamma_{1}},\frac{1-\lambda}{1-\gamma_{1}},\frac{1}{\gamma_{1}}\right)$
for any $\lambda\in\mathbb{R}$.

Case 2.2. If $\delta_{1}=\gamma_{2}=0$ and $\gamma_{1}\neq\delta_{2}$,
by using (\ref{eq:XLRec_PtFix2}) we get for the root $y_{1}$
the solution $\left(\frac{1}{1-\gamma_{1}},0,\frac{1}{\gamma_{1}}\right)$ with $\gamma_1\ne1$
and for $y_{2}$ the fixed point $\left(0,\frac{1}{1-\delta_{2}},\frac{1}{\delta_{2}}\right)$ with $\delta_2\ne1$.

Case 2.3. If $\delta_{1}=0$, $\gamma_{2}\neq0$ and $\gamma_{1}=\delta_{2}\ne1$ then from (\ref{eq:XLRec_PtFix2}) we get $\left(0,\frac{1}{1-\gamma_{1}},\frac{1}{\gamma_{1}}\right)$.

Case 2.4. If $\delta_{1}=0$, $\gamma_{2}\neq0$ and $\gamma_{1}\neq\delta_{2}$,
for the root $y_{1}=\frac{1}{\gamma_1}$ the system (\ref{eq:XLRec_PtFix2}) is
written
\[
\begin{cases}
\hspace{1.5cm}\gamma_{2}x_{1}+\left(\delta_{2}-\gamma_{1}\right)x_{2} & =\;0\\
\left(1-\gamma_{1}-\gamma_{2}\right)x_{1}+\left(1-\delta_{2}\right)x_{2} & =\;1
\end{cases}
\]
it follows the fixed point $\left(\frac{\gamma_{1}-\delta_{2}}{\left(1-\gamma_{1}\right)\left(\gamma_{1}+\gamma_{2}-\delta_{2}\right)},\frac{\gamma_{2}}{\left(1-\gamma_{1}\right)\left(\gamma_{1}+\gamma_{2}-\delta_{2}\right)},\frac{1}{\gamma_{1}}\right)$
with $\gamma_1\ne1,$ $\gamma_{1}+\gamma_{2}-\delta_{2}\ne0$ and for $y_{2}$ we get by (\ref{eq:XLRec_PtFix2}): $\left(0,\frac{1}{1-\delta_{2}},\frac{1}{\delta_{2}}\right)$ with $\delta_2\ne1$.

Case 2.5. If $\delta_{1}\neq0$, $\gamma_{2}=0$ and $\gamma_{1}=\delta_{2}\ne1$
we get from (\ref{eq:XLRec_PtFix2}) the solution $\left(\frac{1}{1-\gamma_{1}},0,\frac{1}{\gamma_{1}}\right)$.

Case 2.6. If $\delta_{1}\neq0$, $\gamma_{2}=0$ and $\gamma_{1}\neq\delta_{2}$,
for the root $y_{1}$ we get $\left(\frac{1}{1-\gamma_{1}},0,\frac{1}{\gamma_{1}}\right)$  with $\gamma_1\ne1$
and for $y_{2}$ the system (\ref{eq:XLRec_PtFix2}) becomes
\[
\begin{cases}
\left(\gamma_{1}-\delta_{2}\right)x_{1}+\delta_{1}x_{2} & =\;0\\
\left(1-\gamma_{1}\right)x_{1}+\left(1-\delta_{1}-\delta_{2}\right)x_{2} & =\;1
\end{cases}
\]
it follows the fixed point $\left(\frac{\delta_{1}}{\left(1-\delta_{2}\right)\left(\delta_{1}+\delta_{2}-\gamma_{1}\right)},\frac{\delta_{2}-\gamma_{1}}{\left(1-\delta_{2}\right)\left(\delta_{1}+\delta_{2}-\gamma_{1}\right)},\frac{1}{\delta_{2}}\right)$ with $\delta_2\ne1$ and $\delta_1+\delta_2-\gamma_1\ne0$.

Case 2.7. If $\delta_{1}\neq0$, $\gamma_{2}\neq0$ we have $\Delta>0$,
to each root $y_{i}$ of (\ref{eq:XLRec_y}) corresponds the
fixed point $\left(\frac{\delta_{1}y_{i}}{\left(\gamma\delta_{1}-\delta\gamma_{1}\right)y_{i}+\delta},\frac{1-\gamma_{1}y_{i}}{\left(\gamma\delta_{1}-\delta\gamma_{1}\right)y_{i}+\delta},y_{i}\right)$.
\end{proof}
In the following we consider the dynamical system $\left(z^{\left(t\right)}\right)_{t\geq0}$
generated by $W$ for a given initial point $z^{\left(0\right)}=\left(x_{1}^{\left(0\right)},x_{2}^{\left(0\right)},y^{\left(0\right)}\right)$,
we have $z^{\left(t\right)}=W^{t}\left(z^{\left(0\right)}\right)$
and $z^{\left(t\right)}=\left(x_{1}^{\left(t\right)},x_{2}^{\left(t\right)},y^{\left(t\right)}\right)$.
It is clear that if there is $t_{0}\geq0$ such as $y^{\left(t_{0}\right)}=0$
then by (\ref{eq:W_LR/L}) we have $W^{t}\left(z\right)=0$ for
all $t\geq t_{0}$. Now it is assumed that $y^{\left(t\right)}\neq0$
for all $t\geq0$.

To study the trajectories $\bigl(z^{\left(t\right)}\bigr)$ we
consider two cases depending on whether the set $\mathcal{E}_{z^{\left(0\right)}}=\left\{ t\in\mathbb{N}:x_{2}^{\left(t\right)}=0\right\} $
is infinite or finite.
\begin{lemma}
\label{lem:E_infinite} Let $W$ be the gonosomal operator defined 
by (\ref{eq:W_LR/L}) and $y^{\left(t\right)}\neq0$
for all $t\geq0$.

a) If $\gamma_{2}=0$, then the following are equivalent:

(i) $\mathcal{E}_{z^{\left(0\right)}}$ is infinite; (ii) $\mathbb{N}^{*}\subset\mathcal{E}_{z^{\left(0\right)}}$;
(iii) $x_{2}^{\left(1\right)}=0$.\medskip{}

b) If $\gamma_{2}\neq0$, then the following are equivalent:

(i) $\mathcal{E}_{z^{\left(0\right)}}$ is infinite; (ii) $\mathcal{E}_{z^{\left(0\right)}}=2\mathbb{N}\mbox{ or }\mathbb{N}\setminus2\mathbb{N}$;
(iii) $\begin{cases}
x_{1}^{\left(0\right)}=0, & x_{2}^{\left(1\right)}=x_{2}^{\left(3\right)}=0,\\
\mbox{or}\\
x_{1}^{\left(1\right)}=0, & x_{2}^{\left(0\right)}=x_{2}^{\left(2\right)}=0.
\end{cases}$\end{lemma}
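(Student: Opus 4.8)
The plan is to reduce the whole statement to the two scalar recursions read off from (\ref{eq:W_LR/L}),
\[
x_{1}^{(t+1)}=\bigl(\gamma_{1}x_{1}^{(t)}+\delta_{1}x_{2}^{(t)}\bigr)y^{(t)},\qquad
x_{2}^{(t+1)}=\bigl(\gamma_{2}x_{1}^{(t)}+\delta_{2}x_{2}^{(t)}\bigr)y^{(t)},
\]
together with one structural remark. Since the structure constants are nonnegative, the trajectory stays in $\mathbb{R}_{+}^{3}$, and $y^{(t)}\neq0$ for every $t$, the vanishing of the two non-$y$ coordinates obeys a Boolean dynamics: writing $P_{t}$ for the condition $x_{1}^{(t)}=0$ and $Q_{t}$ for the condition $x_{2}^{(t)}=0$, one has $Q_{t+1}\Leftrightarrow(\gamma_{2}x_{1}^{(t)}=\delta_{2}x_{2}^{(t)}=0)$, $P_{t+1}\Leftrightarrow(\gamma_{1}x_{1}^{(t)}=\delta_{1}x_{2}^{(t)}=0)$, and $\mathcal{E}_{z^{(0)}}=\{t\in\mathbb{N}:Q_{t}\}$. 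The key constraint is that $(P_{t},Q_{t})=(\text{true},\text{true})$ never occurs, since $x_{1}^{(t)}=x_{2}^{(t)}=0$ would force $y^{(t+1)}=0$, against the hypothesis.

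For part (a), $\gamma_{2}=0$ gives $x_{2}^{(t+1)}=\delta_{2}x_{2}^{(t)}y^{(t)}$. If $\delta_{2}=0$ then $x_{2}^{(t)}=0$ for all $t\geq1$ and (i)--(iii) all hold. If $\delta_{2}\neq0$ then $Q_{t+1}\Leftrightarrow Q_{t}$, hence $Q_{t}\Leftrightarrow Q_{0}$ for $t\geq1$, and $\mathcal{E}_{z^{(0)}}$ equals $\mathbb{N}$ or $\emptyset$ according as $x_{2}^{(0)}=0$ or not; in both regimes each of (i), (ii), (iii) is equivalent to $x_{2}^{(0)}=0$ (for (iii) use $x_{2}^{(1)}=\delta_{2}x_{2}^{(0)}y^{(0)}$). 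This proves (i)$\Leftrightarrow$(ii)$\Leftrightarrow$(iii).

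For part (b), $\gamma_{2}\neq0$, I would split on $\delta_{2}$ and, when $\delta_{2}=0$, on $(\gamma_{1},\delta_{1})$. If $\delta_{2}\neq0$ then $Q_{t+1}\Leftrightarrow(x_{1}^{(t)}=x_{2}^{(t)}=0)$, which is impossible, so $Q_{t}$ is false for $t\geq1$, $\mathcal{E}_{z^{(0)}}\subseteq\{0\}$ is finite, and the three statements all fail. If $\delta_{2}=0$ then $Q_{t+1}\Leftrightarrow P_{t}$; the subcase $\gamma_{1}=\delta_{1}=0$ cannot occur, as it forces $x_{1}^{(2)}=x_{2}^{(2)}=0$ and hence $y^{(3)}=0$, while the subcases $(\gamma_{1}\neq0,\delta_{1}=0)$ and $(\gamma_{1}\neq0,\delta_{1}\neq0)$ make $P_{t}$, and then $Q_{t}$, eventually false, so $\mathcal{E}_{z^{(0)}}$ is again finite and the three statements fail --- here it is essential that (iii) also demands $x_{2}^{(3)}=0$ (resp.\ $x_{2}^{(2)}=0$), which does not hold.

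The only substantive subcase is $\gamma_{1}=0$, $\delta_{1}\neq0$ (with $\gamma_{2}\neq0$, $\delta_{2}=0$): then $P_{t+1}\Leftrightarrow Q_{t}$ and $Q_{t+1}\Leftrightarrow P_{t}$, so both sequences are $2$-periodic, and $Q_{1}\Leftrightarrow P_{0}$, $P_{1}\Leftrightarrow Q_{0}$ combined with $(P_{t},Q_{t})\neq(\text{true},\text{true})$ force at most one of $Q_{0},Q_{1}$ to be true. Hence $\mathcal{E}_{z^{(0)}}$ equals $2\mathbb{N}$, $\mathbb{N}\setminus2\mathbb{N}$, or $\emptyset$ according as exactly $Q_{0}$, exactly $Q_{1}$, or neither holds; in the first two cases the two branches of (iii) are read off directly from $x_{1}^{(1)}=\delta_{1}x_{2}^{(0)}y^{(0)}$ and $x_{2}^{(2)}=\gamma_{2}x_{1}^{(1)}y^{(1)}$ (and the analogous relations for $x_{1}^{(2)},x_{2}^{(3)}$), while in the last case the three statements fail. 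Assembling the subcases yields (i)$\Leftrightarrow$(ii)$\Leftrightarrow$(iii). I expect this $2$-periodic subcase to be the one real obstacle: one must match the periodic pattern of $\{t:x_{2}^{(t)}=0\}$ exactly against $2\mathbb{N}$ and $\mathbb{N}\setminus2\mathbb{N}$, and verify that the auxiliary vanishing conditions built into (iii) are precisely what distinguishes it from the neighbouring degenerate configurations --- with the hypothesis $y^{(t)}\neq0$ invoked throughout to discard those.
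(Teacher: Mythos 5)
Your reduction to the Boolean dynamics of the vanishing pattern rests entirely on the assertion that ``since the structure constants are nonnegative, the trajectory stays in $\mathbb{R}_{+}^{3}$'', and that is where the proof has a genuine gap: cone invariance holds only if $z^{(0)}\in\mathbb{R}_{+}^{3}$, whereas the lemma is invoked in the paper for arbitrary initial points $z^{(0)}\in\mathbb{R}^{3}$ (the two theorems built on it are stated for $z^{(0)}\in\mathbb{R}^{3}$, with absolute values such as $|x_{1}^{(1)}y^{(1)}|$ in their hypotheses, so sign-indefinite data are genuinely in play). Off the cone your key equivalences $x_{2}^{(t+1)}=0\Leftrightarrow(\gamma_{2}x_{1}^{(t)}=\delta_{2}x_{2}^{(t)}=0)$ and $x_{1}^{(t+1)}=0\Leftrightarrow(\gamma_{1}x_{1}^{(t)}=\delta_{1}x_{2}^{(t)}=0)$ fail, because $\gamma_{2}x_{1}^{(t)}+\delta_{2}x_{2}^{(t)}$ can vanish by cancellation (for instance $\gamma_{2},\delta_{2}\neq0$ and $x_{1}^{(0)}=-\delta_{2}x_{2}^{(0)}/\gamma_{2}\neq0$ give $x_{2}^{(1)}=0$ with neither term zero). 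Concretely this breaks the two places in part (b) where you must show that (i) fails: the claim ``if $\delta_{2}\neq0$ then $x_{2}^{(t)}\neq0$ for all $t\geq1$, so $\mathcal{E}_{z^{(0)}}\subseteq\{0\}$'', and, in the subcase $\delta_{2}=0$, $\gamma_{1}\neq0$, $\delta_{1}\neq0$, the claim that $x_{1}^{(t)}$, hence $x_{2}^{(t)}$, is eventually nonzero. (Part (a), the subcase $\gamma_{1}=\delta_{2}=0$, $\gamma_{2}\delta_{1}\neq0$, and your checks that (iii) fails in the degenerate subcases involve only single-term products, so those parts do survive for arbitrary real data.)

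The missing idea is exactly the device the paper uses: anchor the argument at $t_{0}=\min\mathcal{E}_{z^{(0)}}$. Since $x_{2}^{(t_{0})}=0$, every coordinate at time $t_{0}+m$ is a polynomial with nonnegative coefficients in the structure constants times a power of the single nonzero number $x_{1}^{(t_{0})}y^{(t_{0})}$ (the paper's sequences $a_{m},b_{m},c_{m}\geq0$), so from $t_{0}$ onward no cancellation can occur and the term-by-term vanishing argument becomes legitimate; infinitude of $\mathcal{E}_{z^{(0)}}$ then forces $\delta_{2}=0$, $\gamma_{1}=0$, $\delta_{1}\neq0$ and $t_{0}\leq1$, after which the single-term recursions give the patterns in (ii) and (iii). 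Your case analysis over the structure constants is a clean and essentially correct organization of the problem on $\mathbb{R}_{+}^{3}$, and it could be repaired by running the Boolean argument only from $t_{0}$ onward using this homogeneity observation; but as written it assumes positivity of the whole trajectory, which is unjustified in the generality in which the lemma is stated and used.
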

\begin{proof}
\emph{a}) If we suppose $\gamma_{2}=0$, from (\ref{eq:W_LR/L})
we get: $\enskip x_{2}^{\left(t+1\right)}=\delta_{2}x_{2}^{\left(t\right)}y^{\left(t\right)}\enskip\left(*\right)$.

$\left(i\right)\Rightarrow\left(iii\right)$ Let $t_{0}$ be
the smallest element of $\mathcal{E}_{z^{\left(0\right)}}$,
if $t_{0}=0$ we deduce from $\left(*\right)$ that $x_{2}^{\left(t\right)}=0$
for all $t\geq0$. If $t_{0}\geq1$, from $0=x_{2}^{\left(t_{0}\right)}=\delta_{2}x_{2}^{\left(t_{0}-1\right)}y^{\left(t_{0}-1\right)}$,
$y^{\left(t_{0}-1\right)}\neq0$ and by minimality of $t_{0}$
we get $\delta_{2}=0$ but this implies $x_{2}^{\left(t\right)}=0$
for all $t\geq1$.

$\left(iii\right)\Rightarrow\left(i\right)$ If $x_{2}^{\left(1\right)}=0$
it is clear from $\left(*\right)$ that $x_{2}^{\left(t\right)}=0$
from all $t\geq1$.

$\left(ii\right)\Rightarrow\left(i\right)$ is trivial.\medskip{}

\emph{b}) If we have $\gamma_{2}\neq0$.

$\left(i\right)\Rightarrow\left(ii\right)$ Let $t_{0}$ be
the smallest element of $\mathcal{E}_{z^{\left(0\right)}}$,
from (\ref{eq:W_LR/L}) we have
\[
x_{1}^{\left(t_{0}+1\right)}=\gamma_{1}x_{1}^{\left(t_{0}\right)}y^{\left(t_{0}\right)},\quad x_{2}^{\left(t_{0}+1\right)}=\gamma_{2}x_{1}^{\left(t_{0}\right)}y^{\left(t_{0}\right)},\quad y^{\left(t_{0}+1\right)}=\gamma x_{1}^{\left(t_{0}\right)}y^{\left(t_{\text{0}}\right)}.
\]
And for any $m\geq1$ it exists $a_{m},b_{m},c_{m}\geq0$ such
as
\begin{equation}
\begin{cases}
x_{1}^{\left(t_{0}+m+1\right)}=\gamma^{2^{m-1}}a_{m}\left(x_{1}^{\left(t_{0}\right)}y^{\left(t_{0}\right)}\right)^{2^{m-1}} & \medskip\\
x_{2}^{\left(t_{0}+m+1\right)}=\gamma_{2}\gamma^{2^{m-1}}b_{m}\left(x_{1}^{\left(t_{0}\right)}y^{\left(t_{0}\right)}\right)^{2^{m-1}} & \medskip\\
y^{\left(t_{0}+m+1\right)}=\gamma^{2^{m-1}}c_{m}\left(x_{1}^{\left(t_{0}\right)}y^{\left(t_{0}\right)}\right)^{2^{m-1}}
\end{cases}\label{eq:Rel_m}
\end{equation}
with $a_{1}=\gamma_{1}$, $b_{1}=1$, $c_{1}=\gamma$ and
\[
a_{m+1}=c_{m}\left(\gamma_{1}a_{m}+\delta_{1}\gamma_{2}b_{m}\right),\quad b_{m+1}=c_{m}\left(a_{m}+\delta_{2}b_{m}\right),\quad c_{m+1}=c_{m}\left(\gamma a_{m}+\delta\gamma_{2}b_{m}\right).
\]

From $y^{\left(t\right)}\neq0$ for all $t\geq0$ and the third
equation of (\ref{eq:Rel_m}) we deduce $\gamma\neq0$, $x_{1}^{\left(t_{0}\right)}\neq0$
and $c_{m}\neq0$ for $m\geq1$. As $\mathcal{E}_{z^{\left(0\right)}}$
is infinite, there exists $m_{0}\geq3$ such as $x_{2}^{\left(t_{0}+m_{0}+1\right)}=0$,
thus we have $b_{m_{0}}=0$, from the relation giving $b_{m_{0}}$
it follows $a_{m_{0}-1}=\delta_{2}b_{m_{0}-1}=0\quad\left(*\right)$,
then $c_{m_{0}}=\delta\gamma_{2}c_{m_{0}-1}b_{m_{0}-1}$, as
$c_{m_{0}}\neq0$ we get $\delta\gamma_{2}b_{m_{0}-1}\neq0$
and with $\left(*\right)$ we get $\delta_{2}=0$. From $0=a_{m_{0}-1}=c_{m_{0}-2}\left(\gamma_{1}a_{m_{0}-2}+\delta_{1}\gamma_{2}b_{m_{0}-2}\right)$
we deduce $\gamma_{1}a_{m_{0}-2}=\delta_{1}\gamma_{2}b_{m_{0}-2}=0$.
If we suppose $\gamma_{1}\neq0$ then we get $a_{m_{0}-2}=0$ that
leads by recursively to the contradiction $a_{1}=0$. Thus we
have $\gamma_{1}=0$ and from (\ref{eq:W_LR/L}) we get
\[
\begin{cases}
x_{1}^{\left(t+1\right)} & =\;\delta_{1}x_{2}^{\left(t\right)}y^{\left(t\right)}\medskip\\
x_{2}^{\left(t+1\right)} & =\;\gamma_{2}x_{1}^{\left(t\right)}y^{\left(t\right)}\medskip\\
y^{\left(t+1\right)} & =\;\bigl(\left(1-\gamma_{2}\right)x_{1}^{\left(t\right)}+\left(1-\delta_{1}\right)x_{2}^{\left(t\right)}\bigr)y^{\left(t\right)}.
\end{cases}
\]
We can say that $\delta_{1}\neq0$ otherwise we would have $x_{1}^{\left(t\right)}=0$
for all $t\geq1$ hence $x_{2}^{\left(t\right)}=0$ for each
$t\geq2$ and $y^{\left(t\right)}=0$ for every $t\geq3$. Assuming
$t_{0}\geq2$, from $x_{2}^{\left(t_{0}\right)}=0$ we get $\gamma_{2}x_{1}^{\left(t_{0}-1\right)}y^{\left(t_{0}-1\right)}=0$
then $0=x_{1}^{\left(t_{0}-1\right)}=\delta_{1}x_{2}^{\left(t_{0}-2\right)}y^{\left(t_{0}-2\right)}$
hence $x_{2}^{\left(t_{0}-2\right)}=0$ which contradicts the
minimality of $t_{0}$. Therefore $t_{0}\leq1$, for $t_{0}=1$
we get $0=x_{2}^{\left(1\right)}=\gamma_{2}x_{1}^{\left(0\right)}y^{\left(0\right)}$
hence $x_{1}^{\left(0\right)}=0$ then we get $x_{2}^{\left(0\right)}\neq0$
otherwise $y^{\left(1\right)}=0$, next $x_{1}^{\left(2\right)}=\delta_{1}x_{2}^{\left(1\right)}y^{\left(1\right)}=0$
hence $x_{2}^{\left(3\right)}=\gamma_{2}x_{1}^{\left(2\right)}y^{\left(2\right)}=0$.
In the case $t_{0}=0$, we have $x_{2}^{\left(0\right)}=0$ hence
$x_{1}^{\left(1\right)}=0$ then $x_{2}^{\left(2\right)}=0$.

$\left(iii\right)\Rightarrow\left(ii\right)$ If $x_{1}^{\left(0\right)}=x_{2}^{\left(1\right)}=x_{2}^{\left(3\right)}=0$,
we have $0=x_{2}^{\left(1\right)}=\delta_{2}x_{2}^{\left(0\right)}y^{\left(0\right)}$,
since $x_{2}^{\left(0\right)}\neq0$ otherwise $y^{\left(1\right)}=0$
we get $\delta_{2}=0$. From this we deduce $x_{2}^{\left(2\right)}=\delta_{1}\gamma_{2}x_{2}^{\left(0\right)}y^{\left(0\right)}y^{\left(1\right)}$
and $0=x_{2}^{\left(3\right)}=\gamma_{1}\delta_{1}\gamma_{2}x_{2}^{\left(0\right)}y^{\left(0\right)}y^{\left(1\right)}y^{\left(2\right)}$
hence $\gamma_{1}\delta_{1}=0$, assuming $\delta_{1}=0$ we
get $x_{1}^{\left(1\right)}=0$ and the contradiction $y^{\left(2\right)}=0$,
thus we have $\delta_{1}\neq0$ and $\gamma_{1}=0$. Finally
we have $x_{2}^{\left(2t+1\right)}=\delta_{1}\gamma_{2}x_{2}^{\left(2t-1\right)}y^{\left(2\right)}$
for all $t\geq1$ and from $x_{2}^{\left(1\right)}=0$ we get
$\mathcal{E}_{z^{\left(0\right)}}=\mathbb{N}\setminus2\mathbb{N}$.

If $x_{1}^{\left(1\right)}=x_{2}^{\left(0\right)}=x_{2}^{\left(2\right)}=0$,
we have $0=x_{1}^{\left(1\right)}=\gamma_{1}x_{1}^{\left(0\right)}y^{\left(0\right)}$,
since $x_{1}^{\left(0\right)}\neq0$ otherwise $y^{\left(1\right)}=0$
we get $\gamma_{1}=0$. From $0=x_{2}^{\left(2\right)}=\delta_{2}x_{2}^{\left(1\right)}y^{\left(1\right)}$
and $x_{2}^{\left(1\right)}\neq0$ we get $\delta_{2}=0$. Then
for all $t\geq0$ we have $x_{2}^{\left(2t+2\right)}=\gamma_{2}x_{1}^{\left(2t+1\right)}y^{\left(2t+1\right)}=\delta_{1}\gamma_{2}x_{2}^{\left(2t\right)}y^{\left(2t\right)}y^{\left(2t+1\right)}$
, with this and $x_{2}^{\left(0\right)}=0$ we get $\mathcal{E}_{z^{\left(0\right)}}=2\mathbb{N}$.\end{proof}
\begin{thm}
Given any initial point $z^{\left(0\right)}\in\mathbb{R}^{3}$
such as $\mathcal{E}_{z^{\left(0\right)}}$ is infinite. For
the gonosomal operator (\ref{eq:W_LR/L}) we get:

a) if $\gamma_{2}=0$, then
\begin{eqnarray*}
\lim_{t\rightarrow\infty}W^{t}\left(z^{\left(0\right)}\right) & = & \begin{cases}
\left(0,0,0\right) & \mbox{if }\left|x_{1}^{\left(1\right)}y^{\left(1\right)}\right|<\frac{1}{\gamma_1(1-\gamma_1)}\medskip\\
\left(\frac{1}{1-\gamma_1},0,\frac{1}{\gamma_1}\right) & \mbox{if }\left|x_{1}^{\left(1\right)}y^{\left(1\right)}\right|=\frac{1}{\gamma_1(1-\gamma_1)}\medskip\\
+\infty & \mbox{if }\left|x_{1}^{\left(1\right)}y^{\left(1\right)}\right|>\frac{1}{\gamma_1(1-\gamma_1)}.
\end{cases}\\
V^{t+2}\left(z^{\left(0\right)}\right) & = & \left(\gamma_{1},0,1-\gamma_{1}\right),\quad\left(\forall t\geq0\right).
\end{eqnarray*}

b) if $\gamma_{2}\neq0$ and

\textbf{case 1}: if $x_{1}^{\left(0\right)}=0$, then
\begin{eqnarray*}
\lim_{t\rightarrow\infty}W^{t}\left(z^{\left(0\right)}\right) & =  \begin{cases}
\left(0,0,0\right) & \mbox{if }\left|x_{2}^{\left(0\right)}y^{\left(0\right)}\right|<\frac{1}{\sqrt[3]{\gamma_2\delta_1^2\gamma\delta^2}}\medskip\\
+\infty & \mbox{if }\left|x_{2}^{\left(0\right)}y^{\left(0\right)}\right|>\frac{1}{\sqrt[3]{\gamma_2\delta_1^2\gamma\delta^2}}.
\end{cases}\\
\end{eqnarray*}
if $\left|x_{2}^{\left(0\right)}y^{\left(0\right)}\right|=\frac{1}{\sqrt[3]{\gamma_2\delta_1^2\gamma\delta^2}}$ then $\forall t\geq0$
\begin{eqnarray*}
W^{2t+1}\left(z^{\left(0\right)}\right) & = \left(\frac{\delta_1}{\sqrt[3]{\gamma_2\delta_1^2\gamma\delta^2}},0,\frac{\delta}{\sqrt[3]{\gamma_2\delta_1^2\gamma\delta^2}} \right) \\
W^{2t+2}\left(z^{\left(0\right)}\right) & = \left(0, \frac{\gamma_2\delta_1\delta}{\sqrt[3]{\gamma_2\delta_1^2\gamma\delta^2}},\frac{\gamma\delta_1\delta}{\sqrt[3]{\gamma_2\delta_1^2\gamma\delta^2}} \right)\\
\end{eqnarray*}
and for any $z^{(0)}$ and $\forall t\geq0$
\begin{eqnarray*}
V^{2t+1}\left(z^{\left(0\right)}\right)  &= \left(\delta_{1},0,1-\delta_{1}\right),\\
V^{2t+2}\left(z^{\left(0\right)}\right)  &= \left(0,\gamma_{2},1-\gamma_{2}\right).
\end{eqnarray*}

\textbf{case 2}: if $x_{2}^{\left(0\right)}=0$,
\begin{eqnarray*}
\lim_{t\rightarrow\infty}W^{t}\left(z^{\left(0\right)}\right) & =  \begin{cases}
\left(0,0,0\right) & \mbox{if }\left|x_{1}^{\left(0\right)}y^{\left(0\right)}\right|<\frac{1}{\sqrt[3]{\gamma_2^2\delta_1\gamma^2\delta}}\medskip\\
+\infty & \mbox{if }\left|x_{1}^{\left(0\right)}y^{\left(0\right)}\right|>\frac{1}{\sqrt[3]{\gamma_2^2\delta_1\gamma^2\delta}}.
\end{cases}\\
\end{eqnarray*}
if $\left|x_{1}^{\left(0\right)}y^{\left(0\right)}\right|=\frac{1}{\sqrt[3]{\gamma_2^2\delta_1\gamma^2\delta}}$ then  $\forall t\geq0$
\begin{eqnarray*}
W^{2t+1}\left(z^{\left(0\right)}\right) & = \left(0,\frac{\gamma_2}{\sqrt[3]{\gamma_2^2\delta_1\gamma^2\delta}},\frac{\gamma}{\sqrt[3]{\gamma_2^2\delta_1\gamma^2\delta}} \right) \\
W^{2t+2}\left(z^{\left(0\right)}\right) & = \left(\frac{\delta_1\gamma_2\gamma}{\sqrt[3]{\gamma_2^2\delta_1\gamma^2\delta}},0,\frac{\delta\gamma_2\gamma}{\sqrt[3]{\gamma_2^2\delta_1\gamma^2\delta}} \right)\\
\end{eqnarray*}
and for any $z^{(0)}$ and $\forall t\geq0$ we have
\begin{eqnarray*}
V^{2t+1}\left(z^{\left(0\right)}\right) & = & \left(0,\gamma_{2},1-\gamma_{2}\right),\\
V^{2t+2}\left(z^{\left(0\right)}\right) & = & \left(\delta_{1},0,1-\delta_{1}\right).
\end{eqnarray*}

\end{thm}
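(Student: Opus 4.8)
The plan is to exploit Lemma~\ref{lem:E_infinite}: under the hypothesis that $\mathcal{E}_{z^{(0)}}$ is infinite, its statement \emph{and} the computations inside its proof pin down which of the constants $\gamma_1,\gamma_2,\delta_1,\delta_2$ must vanish, and once they do the orbit $z^{(t)}=W^t\bigl(z^{(0)}\bigr)$ collapses to an explicitly solvable one-dimensional recursion. Throughout I keep the standing assumption $y^{(t)}\ne0$ for all $t$ (otherwise $W^t(z^{(0)})=0$ eventually, a case already settled).

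\emph{Case $\gamma_2=0$.} The proof of Lemma~\ref{lem:E_infinite}(a) shows that ``$\mathcal{E}_{z^{(0)}}$ infinite'' forces $x_2^{(t)}=0$ for every $t\ge1$, so that for $t\ge1$ the pair $\bigl(x_1^{(t)},y^{(t)}\bigr)$ evolves by $x_1^{(t+1)}=\gamma_1 x_1^{(t)}y^{(t)}$ and $y^{(t+1)}=(1-\gamma_1)x_1^{(t)}y^{(t)}$, which is exactly the operator of Proposition~\ref{prop:LR/R} with $\gamma$ replaced by $\gamma_1$; here necessarily $\gamma_1\notin\{0,1\}$, since otherwise $y^{(t)}$ would eventually vanish. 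Applying Proposition~\ref{prop:LR/R} to the point $z^{(1)}$, i.e. shifting the time index by one, yields at once the trichotomy for $\lim_{t\to\infty}W^t(z^{(0)})$ in terms of $\bigl|x_1^{(1)}y^{(1)}\bigr|$ versus $\tfrac{1}{\gamma_1(1-\gamma_1)}$, together with $\varpi\circ W^t(z^{(0)})\ne0$. Since $W^t(z^{(0)})$ is, for $t\ge2$, a scalar multiple of the $\varpi$-value-$1$ vector $(\gamma_1,0,1-\gamma_1)$, Proposition~\ref{prop:W=000026O}(e) gives $V^t(z^{(0)})=\tfrac{1}{\varpi\circ W^t(z^{(0)})}W^t(z^{(0)})=(\gamma_1,0,1-\gamma_1)$ for $t\ge2$.

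\emph{Case $\gamma_2\ne0$.} The proof of Lemma~\ref{lem:E_infinite}(b) forces $\gamma_1=\delta_2=0$, $\delta_1\ne0$, $\gamma=1-\gamma_2\ne0$, $\delta=1-\delta_1$, and exhibits exactly two mutually exclusive possibilities: either $x_1^{(0)}=0$ (and then $\mathcal{E}_{z^{(0)}}=\mathbb{N}\setminus2\mathbb{N}$, with $x_2^{(0)}\ne0$) or $x_2^{(0)}=0$ (and then $\mathcal{E}_{z^{(0)}}=2\mathbb{N}$). In the first case an easy induction on (\ref{eq:W_LR/L}), using that only two of the three coefficients survive, shows $x_1^{(t)}=0$ for even $t$ and $x_2^{(t)}=0$ for odd $t$; hence $z^{(t)}=s_t u_t$ with $u_t=(\delta_1,0,1-\delta_1)$ for $t$ odd and $u_t=(0,\gamma_2,1-\gamma_2)$ for $t$ even, where the scalar obeys $s_{t+1}=c_t s_t^2$ with $c_t$ alternating between $\delta_1\delta$ and $\gamma_2\gamma$. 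Along odd indices this collapses to $s_{2k+1}=L s_{2k-1}^4$ with $L=\gamma_2\gamma\delta_1^2\delta^2$ and $s_1=x_2^{(0)}y^{(0)}$, and the substitution $\xi_k=L^{1/3}s_{2k+1}$ turns it into $\xi_{k+1}=\xi_k^4$: thus $\xi_k\to0$, $\xi_k\to1$, or $|\xi_k|\to\infty$ according as $\bigl|x_2^{(0)}y^{(0)}\bigr|$ is $<$, $=$, or $>L^{-1/3}=\bigl(\gamma_2\delta_1^2\gamma\delta^2\bigr)^{-1/3}$. Reading off the scalar factors $s_t$ yields the three claimed behaviours of $\lim_{t\to\infty}W^t(z^{(0)})$ and, at the critical value, the explicit two-periodic orbit. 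Finally $\varpi\circ W^t(z^{(0)})\ne0$ for all $t$ (as $L\ne0$ and $s_1\ne0$) and each $W^t(z^{(0)})$ is a scalar multiple of a $\varpi$-value-$1$ vector, so by Proposition~\ref{prop:W=000026O}(e) we obtain the stationary two-periodic frequencies $V^{2t+1}(z^{(0)})=(\delta_1,0,1-\delta_1)$ and $V^{2t+2}(z^{(0)})=(0,\gamma_2,1-\gamma_2)$. The subcase $x_2^{(0)}=0$ is completely symmetric: the two ``pure'' directions---and hence the two frequency values---swap, the scalar recursion becomes $s_{2k+1}=M s_{2k-1}^4$ with $M=\gamma_2^2\delta_1\gamma^2\delta$ and $s_1=x_1^{(0)}y^{(0)}$, and the threshold is $\bigl|x_1^{(0)}y^{(0)}\bigr|$ versus $M^{-1/3}$.

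The algebra is routine; care is needed in three places: (i) extracting from the \emph{proof} of Lemma~\ref{lem:E_infinite}, not merely its statement, that the relevant structure constants vanish---so that the dynamics genuinely reduce to the ping-pong above---and keeping straight which coordinate is zero at even versus odd times in each parity alternative; (ii) verifying, where the scalar $s_t$ tends to $0$ or to $\infty$, that the \emph{whole} vector $W^t(z^{(0)})$ behaves as asserted, which is immediate from the factorization $z^{(t)}=s_t u_t$; and (iii) checking $\varpi\circ W^t(z^{(0)})\ne0$ for all $t$, so that the frequency iterates $V^t$ are well defined and equal the normalizations of $W^t$. I expect (i) to be the main obstacle, since it forces one to reuse the internal bookkeeping of the previous lemma rather than invoke it as a black box.
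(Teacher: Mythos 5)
Your proposal is correct and takes essentially the same route as the paper: both use the statement and the internal computations of Lemma \ref{lem:E_infinite} to reduce either to $\gamma_{2}=0$ with $x_{2}^{\left(t\right)}=0$ for $t\geq1$ (so that the dynamics become the two--dimensional system of Proposition \ref{prop:LR/R} with parameter $\gamma_{1}\in\left(0,1\right)$, which the paper simply re-derives in closed form) or to $\gamma_{1}=\delta_{2}=0$ with the even/odd alternation, and then analyze the doubly--exponential scalar recursion and normalize via Proposition \ref{prop:W=000026O} e). Your factorization $z^{\left(t\right)}=s_{t}u_{t}$ with $\xi_{k+1}=\xi_{k}^{4}$ is only a tidier packaging of the paper's explicit iterates; note that at the critical value it gives $W^{2t+2}\left(z^{\left(0\right)}\right)=\bigl(0,\gamma_{2}\delta_{1}\delta\left(\gamma_{2}\delta_{1}^{2}\gamma\delta^{2}\right)^{-2/3},\gamma\delta_{1}\delta\left(\gamma_{2}\delta_{1}^{2}\gamma\delta^{2}\right)^{-2/3}\bigr)$, which agrees with the formulas inside the paper's proof (the exponent $-1/3$ in the theorem's display, and likewise in case 2, appears to be a typo).
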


\begin{proof}
\emph{a}) According to Lemma \ref{lem:E_infinite} we have
$x_{2}^{\left(t\right)}=0$ for $t\geq1$ and from $\gamma_{2}=0$ and with this (\ref{eq:W_LR/L}) becomes
for all $t\geq1$
\begin{equation}
\begin{cases}
x_{1}^{\left(t+1\right)} & =\;\gamma_{1}x_{1}^{\left(t\right)}y^{\left(t\right)}\medskip\\
y^{\left(t+1\right)} & =\;\left(1-\gamma_{1}\right)x_{1}^{\left(t\right)}y^{\left(t\right)}.
\end{cases}\label{eq:S1}
\end{equation}
We have $\gamma_{1}\neq0,1$ otherwise we would have $y^{\left(t\right)}=0$
for $t\geq3$. From (\ref{eq:S1}) we get
\[
\begin{cases}
x_{1}^{\left(t+2\right)} & =\;\gamma_{1}^{\,2^{t}}\left(1-\gamma_{1}\right)^{2^{t}-1}\left(x_{1}^{\left(1\right)}y^{\left(1\right)}\right)^{2^{t}}\medskip\\
y^{\left(t+2\right)} & =\;\gamma_{1}^{\,2^{t}-1}\left(1-\gamma_{1}\right)^{2^{t}}\left(x_{1}^{\left(1\right)}y^{\left(1\right)}\right)^{2^{t}},\quad t\geq0.
\end{cases}
\]
Since $0<\gamma_{1}<1$ we have $\lim_{t\rightarrow\infty}\gamma_{1}^{\,2^{t}}\left(1-\gamma_{1}\right)^{2^{t}}=0$
and with $\varpi\circ W^{t+2}\left(z^{\left(0\right)}\right)=\gamma_{1}^{\,2^{t}-1}\left(1-\gamma_{1}\right)^{2^{t}-1}\left(x_{1}^{\left(1\right)}y^{\left(1\right)}\right)^{2^{t}}$
we get the results of the proposition.\medskip{}

\emph{b}) We saw in the proof of Lemma \ref{lem:E_infinite}
that in this case we have for all $t\geq0$:
\[
\begin{cases}
x_{1}^{\left(t+1\right)} & =\;\delta_{1}x_{2}^{\left(t\right)}y^{\left(t\right)}\medskip\\
x_{2}^{\left(t+1\right)} & =\;\gamma_{2}x_{1}^{\left(t\right)}y^{\left(t\right)}\medskip\\
y^{\left(t+1\right)} & =\;\bigl(\gamma x_{1}^{\left(t\right)}+\delta x_{2}^{\left(t\right)}\bigr)y^{\left(t\right)}.
\end{cases}
\]
where $\gamma=1-\gamma_{2}$ and $\delta=1-\delta_{1}$.

\textbf{Case 1}: $x_{1}^{\left(0\right)}=0$. Then it is clear that $x_{2}^{\left(1\right)}=0.$

We have $x_{2}^{\left(0\right)}\neq0$ if not with $x_{1}^{\left(0\right)}=0$
we get $y^{\left(1\right)}=0$, therefore $x_{1}^{\left(1\right)}=\delta_{1}x_{2}^{\left(0\right)}y^{\left(0\right)}\neq0$.
We show that $x_{1}^{\left(2t\right)}=0$ and $x_{2}^{\left(2t+1\right)}=0$
for all $t\geq0$. Then for all $t\geq0$ we get:
\[
\begin{cases}
x_{1}^{\left(2t+1\right)} & =\;\delta_{1}x_{2}^{\left(2t\right)}y^{\left(2t\right)}\medskip\\
x_{2}^{\left(2t+2\right)} & =\;\gamma_{2}x_{1}^{\left(2t+1\right)}y^{\left(2t+1\right)}\medskip\\
y^{\left(2t+1\right)} & =\;\delta x_{2}^{\left(2t\right)}y^{\left(2t\right)}\\
y^{\left(2t+2\right)} & =\;\gamma x_{1}^{\left(2t+1\right)}y^{\left(2t+1\right)}.
\end{cases}
\]

It follows that

\[
\begin{cases}
x_{1}^{\left(2t+1\right)} & =\;\delta_{1}\left[\gamma_{2}\delta_{1}^{2}\gamma\delta^{2}\right]^{\nicefrac{\left(4^{t}-1\right)}{3}}\left(x_{2}^{\left(0\right)}y^{\left(0\right)}\right)^{4^{t}}\medskip\\
x_{2}^{\left(2t+2\right)} & =\;\gamma_{2}\delta_{1}\delta\left[\gamma_{2}^{2}\delta_{1}^{4}\gamma^{2}\delta^{4}\right]^{\nicefrac{\left(4^{t}-1\right)}{3}}\left(x_{2}^{\left(0\right)}y^{\left(0\right)}\right)^{2\times4^{t}}\medskip\\
y^{\left(2t+1\right)} & =\;\delta\left[\gamma_{2}\delta_{1}^{2}\gamma\delta^{2}\right]^{\nicefrac{\left(4^{t}-1\right)}{3}}\left(x_{2}^{\left(0\right)}y^{\left(0\right)}\right)^{4^{t}}\\
y^{\left(2t+2\right)} & =\;\gamma\delta_{1}\delta\left[\gamma_{2}^{2}\delta_{1}^{4}\gamma^{2}\delta^{4}\right]^{\nicefrac{\left(4^{t}-1\right)}{3}}\left(x_{2}^{\left(0\right)}y^{\left(0\right)}\right)^{2\times4^{t}}.
\end{cases}
\]

Since $y^{\left(t\right)}\neq0$ we get $\gamma_{2}\delta_{1}\gamma\delta\neq0$ and we can change the form of the last system:
\[
\begin{cases}
x_{1}^{\left(2t+1\right)} & =\frac{\delta_{1}}{\sqrt[3]{\gamma_{2}\delta_{1}^{2}\gamma\delta^{2}}}\left(x_{2}^{(0)}y^{(0)}\sqrt[3]{\gamma_{2}\delta_{1}^{2}\gamma\delta^{2}}\right)^{4^{t}}\medskip\\
x_{2}^{\left(2t+2\right)} & =\frac{\gamma_2\delta_{1}\delta}{\sqrt[3]{(\gamma_{2}\delta_{1}^{2}\gamma\delta^{2})^2}}\left(x_{2}^{(0)}y^{(0)}\sqrt[3]{\gamma_{2}\delta_{1}^{2}\gamma\delta^{2}}\right)^{2\times4^{t}}\medskip\\
y^{\left(2t+1\right)} & =\frac{\delta}{\sqrt[3]{\gamma_{2}\delta_{1}^{2}\gamma\delta^{2}}}\left(x_{2}^{(0)}y^{(0)}\sqrt[3]{\gamma_{2}\delta_{1}^{2}\gamma\delta^{2}}\right)^{4^{t}}\medskip\\
y^{\left(2t+2\right)} & =\frac{\gamma\delta_{1}\delta}{\sqrt[3]{(\gamma_{2}\delta_{1}^{2}\gamma\delta^{2})^2}}\left(x_{2}^{(0)}y^{(0)}\sqrt[3]{\gamma_{2}\delta_{1}^{2}\gamma\delta^{2}}\right)^{2\times4^{t}}.
\end{cases}
\]

Using  $0<\gamma_{2}\delta_{1}\gamma\delta<1$ we get the results of the proposition.

From
\begin{eqnarray*}
\varpi\circ W^{2t+1}\left(z^{\left(0\right)}\right) & = & \left[\gamma_{2}\delta_{1}^{2}\gamma\delta^{2}\right]^{\nicefrac{\left(4^{t}-1\right)}{3}}\left(x_{2}^{\left(0\right)}y^{\left(0\right)}\right)^{4^{t}}\\
\varpi\circ W^{2t+2}\left(z^{\left(0\right)}\right) & = & \delta_{1}\delta\left[\gamma_{2}^{2}\delta_{1}^{4}\gamma^{2}\delta^{4}\right]^{\nicefrac{\left(4^{t}-1\right)}{3}}\left(x_{2}^{\left(0\right)}y^{\left(0\right)}\right)^{2\times4^{t}},
\end{eqnarray*}
we deduce the values of $V^{2t+1}\left(z^{\left(0\right)}\right)$
and $V^{2t+2}\left(z^{\left(0\right)}\right)$.

\textbf{Case 2}: $x_{2}^{\left(0\right)}=0$. Then we get $x_{1}^{\left(1\right)}=0.$

We obtain $x_{1}^{\left(0\right)}\neq0$ if not with $x_{2}^{\left(0\right)}=0$
we get $y^{\left(1\right)}=0$, therefore $x_{2}^{\left(1\right)}=\gamma_2x_{1}^{\left(0\right)}y^{\left(0\right)}\neq0$.
Then for all $t\geq0$ we get $x_{1}^{\left(2t+1\right)}=0$ and $x_{2}^{\left(2t\right)}=0$ and
\[
\begin{cases}
x_{1}^{\left(2t+2\right)} & =\;\delta_{1}x_{2}^{\left(2t+1\right)}y^{\left(2t+1\right)}\medskip\\
x_{2}^{\left(2t+1\right)} & =\;\gamma_{2}x_{1}^{\left(2t\right)}y^{\left(2t\right)}\medskip\\
y^{\left(2t+2\right)} & =\;\delta x_{2}^{\left(2t+1\right)}y^{\left(2t+1\right)}\\
y^{\left(2t+1\right)} & =\;\gamma x_{1}^{\left(2t\right)}y^{\left(2t\right)}.
\end{cases}
\]

The results are derived from the previous case by exchanging
the roles of $x_{1}^{\left(t\right)}$ and $x_{2}^{\left(t\right)}$
at the same time as $\gamma_{2}$ with $\delta_{1}$ and $\gamma$ with $\delta$. \end{proof}
\begin{thm}
Given any initial point $z^{\left(0\right)}\in\mathbb{R}^{3}$
such as $\mathcal{E}_{z^{\left(0\right)}}$ is finite. For the
gonosomal operator (\ref{eq:W_LR/L}) we get:

(a) if $\gamma_{1}=\delta_{2}<1$ and $\gamma_{2}\delta_{1}=0$,
\begin{eqnarray*}
\lim_{t\rightarrow\infty}W^{t}\left(z^{\left(0\right)}\right)& = &\left(0,0,0\right)
\end{eqnarray*}

and for any $z^{\left(0\right)}\in S^{\,2}$,

\begin{eqnarray*}
\lim_{t\rightarrow+\infty}V^{t}\left(z^{\left(0\right)}\right) & = & \begin{cases}
\left(\gamma_1,0,\gamma\right) & \mbox{if }\gamma_{2}\neq0,\delta_{1}=0\medskip\\
\left(\frac{\gamma_1x_{1}^{(t_{0})}}{x_{1}^{(t_{0})}+x_{2}^{(t_{0})}},\frac{\delta_2x_{2}^{(t_{0})}}{x_{1}^{(t_{0})}+x_{2}^{(t_{0})}},\frac{\gamma x_{1}^{(t_{0})}+\delta x_2^{(t_0)}}{x_{1}^{(t_{0})}+x_{2}^{(t_{0})}}\right) & \mbox{if }\gamma_{2}=\delta_{1}=0,\medskip\\
\left(0,\delta_2,\delta\right) & \mbox{if }\gamma_{2}=0,\delta_{1}\neq0.
\end{cases}
\end{eqnarray*}

where $t_{0}=\max\left(\mathcal{E}_{z^{\left(0\right)}}\right)+1$.

(b) if $\gamma_{1}\neq\delta_{2}$ or $\gamma_{2}\delta_{1}\neq0$,

\begin{eqnarray*}
\lim_{t\rightarrow\infty}W^{t}\left(z^{\left(0\right)}\right)& = &\left(0,0,0\right)
\end{eqnarray*}

and for any $z^{\left(0\right)}\in S^{\,2}$,

\begin{eqnarray*}
\lim_{t\rightarrow+\infty}V^{t}(z^{(0)}) & =\left(\frac{\gamma_1+\delta_1u(\lambda_i)}{U(\lambda_i)}, \frac{u(\lambda_i)(\gamma_1+\delta_1u(\lambda_i))}{U(\lambda_i)}, \frac{\gamma+\delta u(\lambda_i)}{U(\lambda_i)}\right) &
\end{eqnarray*}
where $i=1$ if $|\lambda_1|<|\lambda_2|$ and $i=2$ if $|\lambda_1|>|\lambda_2|$,\\
and
\begin{eqnarray*}
\begin{cases}
U(\lambda_i) =\delta_1u(\lambda_i)^2+(\delta+\delta_1+\gamma_1)u(\lambda_i)+\gamma+\gamma_1, \medskip\\
u(\lambda_i) =\frac{\gamma_2x_1^{(t_0)}+(\delta_2-\lambda_i)x_2^{(t_0)}}{(\gamma_1-\lambda_i)x_1^{(t_0)}+\delta_1x_2^{(t_0)}},\medskip\\
\lambda_1=\frac{\gamma_1+\delta_2-\sqrt{(\gamma_{1}-\delta_{2})^{2}+4\gamma_{2}\delta_{1}}}{2}, \ \ \lambda_2=\frac{\gamma_1+\delta_2+\sqrt{(\gamma_{1}-\delta_{2})^{2}+4\gamma_{2}\delta_{1}}}{2}.
\end{cases}
\end{eqnarray*}

\end{thm}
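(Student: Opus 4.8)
The plan is to treat the two operators separately. The assertion $\lim_{t\to\infty}W^{t}(z^{(0)})=(0,0,0)$ is the easy half: when $z^{(0)}$ lies on the simplex, stochasticity of the algebra and Proposition~\ref{pro} give $W(z^{(0)})\in\mathbb R_{+}^{3}$ and $\varpi\circ W(z^{(0)})\le\tfrac14$; applying $4ab\le(a+b)^{2}$ to \eqref{omega} yields $4\,\varpi\bigl(z^{(t+1)}\bigr)\le\bigl(\varpi(z^{(t)})\bigr)^{2}$ for non-negative iterates, whence $\varpi(z^{(t)})\le(1/16)^{t-1}\varpi(z^{(1)})\to 0$ and therefore $z^{(t)}\to 0$ since $0\le x_{i}^{(t)},y^{(t)}\le\varpi(z^{(t)})$. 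This step needs neither the dichotomy (a)/(b) nor the finiteness of $\mathcal E_{z^{(0)}}$.

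The substance is in the normalized operator. On the simplex \eqref{eq:Op-V} reads $x_{1}'=(\gamma_{1}x_{1}+\delta_{1}x_{2})/(x_{1}+x_{2})$, $x_{2}'=(\gamma_{2}x_{1}+\delta_{2}x_{2})/(x_{1}+x_{2})$, $y'=(\gamma x_{1}+\delta x_{2})/(x_{1}+x_{2})$, so the projective class $(x_{1}^{(t)}:x_{2}^{(t)})$ is transported by iterating the fixed linear map $L:(x_{1},x_{2})\mapsto(\gamma_{1}x_{1}+\delta_{1}x_{2},\ \gamma_{2}x_{1}+\delta_{2}x_{2})$, with characteristic polynomial $\lambda^{2}-(\gamma_{1}+\delta_{2})\lambda+(\gamma_{1}\delta_{2}-\gamma_{2}\delta_{1})$, non-negative discriminant $(\gamma_{1}-\delta_{2})^{2}+4\gamma_{2}\delta_{1}$ and roots $\lambda_{1}\le\lambda_{2}$. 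I would begin the analysis at $t_{0}=\max(\mathcal E_{z^{(0)}})+1$, so that (under the standing assumption $y^{(t)}\ne0$) one has $x_{2}^{(t)}\ne0$ for $t\ge t_{0}$ and, outside the degenerate sub-cases, $x_{1}^{(t)}\ne0$ as well. Setting $r^{(t)}=x_{2}^{(t)}/x_{1}^{(t)}$, one has $r^{(t+1)}=(\gamma_{2}+\delta_{2}r^{(t)})/(\gamma_{1}+\delta_{1}r^{(t)})$, a projective-linear map whose fixed points are the slopes of the eigenlines of $L$.

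In case (b) the two eigenvalues are distinct and, since $\lambda_{1}+\lambda_{2}=\gamma_{1}+\delta_{2}\ge0$, $\lambda_{2}$ is the dominant one; the attracting fixed point of the $r$-recursion is the slope of the $\lambda_{2}$-eigenvector, and this slope equals $u(\lambda_{1})$, because the image of $(x_{1},x_{2})\mapsto((\gamma_{1}-\lambda_{1})x_{1}+\delta_{1}x_{2},\ \gamma_{2}x_{1}+(\delta_{2}-\lambda_{1})x_{2})$ is exactly the $\lambda_{2}$-eigenline (use the identity $(\lambda-\gamma_{1})(\lambda-\delta_{2})=\gamma_{2}\delta_{1}$) — in particular $u(\lambda_{1})$, though written through $z^{(t_{0})}$, is orbit-independent. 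Since $(x_{1}^{(t)},x_{2}^{(t)})$ stays in the closed positive quadrant while the $\lambda_{1}$-eigenline does not (its slope is strictly negative when $\gamma_{2}\delta_{1}\ne0$), the $\lambda_{2}$-component of $(x_{1}^{(t_{0})},x_{2}^{(t_{0})})$ is non-zero, hence $r^{(t)}\to u:=u(\lambda_{1})$. In case (a) the eigenvalue $\gamma_{1}=\delta_{2}$ is repeated: if $\gamma_{2}=\delta_{1}=0$ then $L=\gamma_{1}\,\mathrm{id}$ and $r^{(t)}$ is frozen at $r^{(t_{0})}$ (this yields the $z^{(t_{0})}$-dependent limit); otherwise $L$ is a single Jordan block and $r^{(t)}$ converges to the slope of its unique eigenline, namely $0$ or $\infty$ according to which of $\gamma_{2},\delta_{1}$ vanishes.

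It then remains to pass to the limit in \eqref{eq:Op-V} after dividing numerator and denominator by $x_{1}^{(t)}$: one gets $y^{(t+1)}\to(\gamma+\delta u)/(1+u)$, $x_{1}^{(t+1)}\to(\gamma_{1}+\delta_{1}u)/(1+u)$ and $x_{2}^{(t+1)}=r^{(t+1)}x_{1}^{(t+1)}\to u(\gamma_{1}+\delta_{1}u)/(1+u)$, and a short computation using the fixed-point identity $\delta_{1}u^{2}+(\gamma_{1}-\delta_{2})u-\gamma_{2}=0$ together with $\gamma_{1}+\gamma_{2}+\gamma=\delta_{1}+\delta_{2}+\delta=1$ shows $1+u=U(\lambda_{i})$, recasting the three limits exactly as in the statement (the two boundary points of case (a) being the specializations $u=0$ and $u=\infty$). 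I expect the genuinely delicate point to be the bookkeeping of the degenerate configurations: one must use that $\mathcal E_{z^{(0)}}$ is finite, together with $y^{(t)}\ne0$ for all $t$ (Lemma~\ref{lem:E_infinite} and Proposition~\ref{prop:W=000026O}), to exclude the orbit getting trapped on the sub-dominant eigenline or collapsing onto a coordinate axis in a way that would alter the limit, and to assign case (a) correctly according to which of $\gamma_{2},\delta_{1}$ is zero; the rest is the routine Perron-type argument for a $2\times2$ matrix.
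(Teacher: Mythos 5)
Your strategy for the $V$-limits is essentially the paper's: both proofs reduce the dynamics to the ratio of the two female coordinates, transported by the M\"obius map of the matrix $M$ with rows $(\gamma_1,\delta_1)$ and $(\gamma_2,\delta_2)$, and then renormalize. The paper computes $M^t$ explicitly via Cayley--Hamilton (the $\theta_t$ formulas when $\Delta>0$, the Jordan formula when $\Delta=0$) and passes to the limit in closed expressions, whereas you argue by attraction to the dominant eigendirection and check the identity $U(\lambda_i)=1+u(\lambda_i)$; that identity does follow from $\delta_1u^2+(\gamma_1-\delta_2)u-\gamma_2=0$ and $\gamma+\gamma_1+\gamma_2=\delta+\delta_1+\delta_2=1$, and your remark that $u(\lambda_1)$ is the slope of the $\lambda_2$-eigenline, hence orbit-independent when $\gamma_2\delta_1\neq0$, is correct, so in the generic situation your route gives the same limits more transparently. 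For the $W$-half you diverge from the paper: you prove $W^t(z^{(0)})\to0$ only for $z^{(0)}\in S^{2}$, via $4\,\varpi(z^{(t+1)})\le\varpi(z^{(t)})^2$, while the theorem asserts it for every $z^{(0)}\in\mathbb{R}^3$ and the paper deduces it from $|\lambda_1|,|\lambda_2|<1$ through closed formulas for $x_i^{(t_0+m)}$ (formulas which keep only the single factor $y^{(t_0)}$ instead of the product of all intermediate $y$'s); your simplex argument is sound where it applies, but you should state explicitly that it does not reach the literal $\mathbb{R}^3$ claim.

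The genuine gap is that the ``bookkeeping'' you defer is exactly where the theorem had to be checked. In case (a) with a Jordan block you never say which boundary limit corresponds to which vanishing coefficient; carrying out your own eigenline computation for \eqref{eq:W_LR/L} gives $x_2^{(t)}/x_1^{(t)}\to0$, hence the limit $(\gamma_1,0,\gamma)$, when $\gamma_2=0,\ \delta_1\neq0$, and $x_2^{(t)}/x_1^{(t)}\to\infty$, hence $(0,\delta_2,\delta)$, when $\gamma_2\neq0,\ \delta_1=0$ --- this agrees with the limits the paper's proof actually computes but is the opposite pairing from the one displayed in the theorem, so by leaving the assignment unspecified you miss the one delicate identification. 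Similarly, in case (b) your non-degeneracy argument (the subdominant eigenline has negative slope) is valid only when $\gamma_2\delta_1\neq0$; if $\gamma_2\delta_1=0$ and $\gamma_1\neq\delta_2$ (still case (b)) one of the eigenlines is a coordinate axis, the denominator of $u(\lambda_i)$ can vanish (for instance $\delta_1=0$ with $x_1^{(t_0)}=0$, or $\delta_1=0$ with $\gamma_1<\delta_2$, where the true limit is the boundary point $(0,\delta_2,\delta)$ while the displayed formula is ill-defined), and the equal-modulus configuration $\gamma_1=\delta_2=0$, $\gamma_2\delta_1\neq0$ is not covered at all since $i$ is undefined and the ratio oscillates with period two. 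These regimes must be isolated and treated separately; the paper is admittedly also terse there (``by considering two cases as in (a)''), but a complete proof has to spell them out, and your proposal as written does not.
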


\begin{proof}
Assume now that the set $\mathcal{E}_{z^{\left(0\right)}}$ is
finite. Let $t_{0}=\max\left(\mathcal{E}_{z^{\left(0\right)}}\right)+1$. We have $x_{2}^{\left(t\right)}\neq0$ for all $t\geq t_{0}$,
because $y^{\left(t\right)}\neq0$ for all $t\geq0$ it follows
from the second equation of (\ref{eq:W_LR/L}) that $\gamma_{2}x_{1}^{\left(t\right)}+\delta_{2}x_{2}^{\left(t\right)}\neq0$
for all $t\geq t_{0}$. From (\ref{eq:W_LR/L}) we get:
\[
\frac{x_{1}^{\left(t+1\right)}}{x_{2}^{\left(t+1\right)}}=\frac{\gamma_{1}x_{1}^{\left(t\right)}+\delta_{1}x_{2}^{\left(t\right)}}{\gamma_{2}x_{1}^{\left(t\right)}+\delta_{2}x_{2}^{\left(t\right)}},\quad\forall t\geq t_{0},
\]
taking $w^{\left(t\right)}=\frac{x_{1}^{\left(t\right)}}{x_{2}^{\left(t\right)}}$
for $t\geq t_{0}$, this is written as $w^{\left(t+1\right)}=f\left(w^{\left(t\right)}\right)$,
where $f\left(x\right)=\frac{\gamma_{1}x+\delta_{1}}{\gamma_{2}x+\delta_{2}}$.

Let $M=\left(\begin{array}{cc}
\gamma_{1} & \delta_{1}\\
\gamma_{2} & \delta_{2}
\end{array}\right)$, if $M^{t}=\left(\begin{array}{cc}
a_{t} & b_{t}\\
c_{t} & d_{t}
\end{array}\right)$ we verify that we have $f^{t}\left(x\right)=\frac{a_{t}x+b_{t}}{c_{t}x+d_{t}}$
for all $t\geq0$. The characteristic polynomial of $M$ is $\chi_{M}\left(X\right)=X^{2}-\left(\gamma_{1}+\delta_{2}\right)X+\left(\gamma_{1}\delta_{2}-\gamma_{2}\delta_{1}\right)$,
its discriminant is $\Delta=\left(\gamma_{1}-\delta_{2}\right)^{2}+4\gamma_{2}\delta_{1}\geq0$.
We have $\Delta=0$ if and only if $\gamma_{1}=\delta_{2}$ and
$\gamma_{2}\delta_{1}=0$.\medskip{}

(\emph{a}) The case $\Delta=0$.

Let $\lambda=\gamma_{1}$ the root of $\chi_{M}$, we have $\gamma_{1}<1$,
indeed if $\gamma_{1}=1$ then $\gamma=\gamma_{2}=0$ and $\delta=\delta_{1}=0$, thus
$\gamma=\delta=0$ which leads to the contradiction $y^{\left(t_{0}+1\right)}=0$.
Modulo $\chi_{M}$ we have for all $t\geq0$: $X^{t}\equiv t\lambda^{t-1}X-\left(t-1\right)\lambda^{t}$
hence $M^{t}=t\lambda^{t-1}M-\left(t-1\right)\lambda^{t}I_{2}$,
it follows that for any $m\geq1$ we get for $z^{\left(t_{0}\right)}\in\mathbb{R}^{3}$:

\begin{eqnarray*}
x_{1}^{\left(t_{0}+m\right)} & = & \lambda^{t_{0}+m-1}\left[\lambda x_{1}^{\left(t_{0}\right)}+\left(t_{0}+m\right)\delta_{1}x_{2}^{\left(t_{0}\right)}\right]y^{\left(t_{0}\right)}\\
x_{2}^{\left(t_{0}+m\right)} & = & \lambda^{t_{0}+m-1}\left[\left(t_{0}+m\right)\gamma_{2}x_{1}^{\left(t_{0}\right)}+\lambda x_{2}^{\left(t_{0}\right)}\right]y^{\left(t_{0}\right)}
\end{eqnarray*}

then
\begin{eqnarray*}
y^{\left(t_{0}+m\right)} & =y^{\left(t_{0}\right)} & \prod_{k=0}^{m-1}\left(\gamma x_{1}^{\left(t_{0}+k\right)}+\delta x_{2}^{\left(t_{0}+k\right)}\right).
\end{eqnarray*}
With $\lambda<1$, we get $\lim_{t\rightarrow+\infty}x_{1}^{\left(t\right)}=0$
and $\lim_{t\rightarrow+\infty}x_{2}^{\left(t\right)}=0$. Concerning
$y^{\left(t\right)}$, it is clear that there exists positive integer $k_0$ such that $\gamma x_1^{(t)}+\delta x_2^{(t)}<1$ for all $t>k_0.$ Finally we get $\lim_{t\rightarrow+\infty}y^{\left(t\right)}=0$.

For the study of the operator $V$, let $z^{\left(0\right)}\in S^{\,2}$,
we consider two cases.

Case 1: If $x_{1}^{\left(t_{0}+m\right)}\neq0$ for all $m\geq1$, then we get $$\frac{x_{2}^{\left(t_{0}+m\right)}}{x_{1}^{\left(t_{0}+m\right)}}=\frac{\left(t_{0}+m\right)\gamma_{2}x_{1}^{\left(t_{0}\right)}+\lambda x_{2}^{\left(t_{0}\right)}}{\lambda x_{1}^{\left(t_{0}\right)}+\left(t_{0}+m\right)\delta_{1}x_{2}^{\left(t_{0}\right)}}.$$
Thus we have
\[
\lim_{m\rightarrow+\infty}\frac{x_{2}^{\left(t_{0}+m\right)}}{x_{1}^{\left(t_{0}+m\right)}}=\begin{cases}
0 & \mbox{if }\gamma_{2}=0,\delta_{1}\neq0,\\
\frac{x_{2}^{\left(t_{0}\right)}}{x_{1}^{\left(t_{0}\right)}} & \mbox{if }\gamma_{2}=\delta_{1}=0,\\
+\infty & \mbox{if }\gamma_{2}\neq0,\delta_{1}=0.
\end{cases}
\]
and for $t\geq t_0+m+1$
\[
\lim_{t\rightarrow+\infty}\frac{y^{(t)}}{x_{1}^{(t)}}=\begin{cases}
\frac{\gamma}{\gamma_1} & \mbox{if }\gamma_{2}=0,\delta_{1}\neq0\\
\frac{\gamma x_{1}^{(t_{0})}+\delta x_{2}^{(t_{0})}}{\gamma_1x_{1}^{\left(t_{0}\right)}} & \mbox{if }\gamma_{2}=\delta_{1}=0,\\
+\infty & \mbox{if }\gamma_{2}=0,\delta_{1}\neq0.
\end{cases}
\]
and
\[
\lim_{t\rightarrow+\infty}\frac{y^{(t)}}{x_{2}^{(t)}}=\begin{cases}
+\infty & \mbox{if }\gamma_{2}=0,\delta_{1}\neq0\\
\frac{\gamma x_{1}^{(t_{0})}+\delta x_{2}^{(t_{0})}}{\delta_2x_{2}^{\left(t_{0}\right)}} & \mbox{if }\gamma_{2}=\delta_{1}=0,\\
\frac{\delta}{\delta_2} & \mbox{if }\gamma_{2}=0,\delta_{1}\neq0.
\end{cases}
\]

Using them and
$$\frac{x_{1}^{(t_{0}+m)}}{\varpi\circ W(z^{(t_{0}+m)})}=\frac{1}{1+\frac{x_{2}^{(t_{0}+m)}}{x_{1}^{(t_{0}+m)}}+\frac{y^{(t_{0}+m)}}{x_{1}^{(t_{0}+m)}}}, \ \ \ \ \frac{x_{2}^{(t_{0}+m)}}{\varpi\circ W(z^{(t_{0}+m)})}=\frac{1}{1+\frac{x_{1}^{(t_{0}+m)}}{x_{2}^{(t_{0}+m)}}+\frac{y^{(t_{0}+m)}}{x_{2}^{(t_{0}+m)}}},$$
$$\frac{y^{(t_{0}+m)}}{\varpi\circ W(z^{(t_{0}+m)})}=\frac{1}{1+\frac{x_{1}^{(t_{0}+m)}}{y^{(t_{0}+m)}}+\frac{x_2^{(t_{0}+m)}}{y^{(t_{0}+m)}}}$$
we get
\[
\lim_{m\rightarrow+\infty}\frac{x_{1}^{\left(t_{0}+m\right)}}{\varpi\circ W\left(z^{\left(t_{0}+m\right)}\right)}=\begin{cases}
\qquad \gamma_1 & \mbox{if }\gamma_{2}=0,\delta_{1}\neq0,\\
\frac{\gamma_1x_{1}^{(t_{0})}}{x_{1}^{\left(t_{0}\right)}+x_{2}^{\left(t_{0}\right)}} & \mbox{if }\gamma_{2}=\delta_{1}=0,\\
\qquad0 & \mbox{if }\gamma_{2}\neq0,\delta_{1}=0,
\end{cases}
\]

\[
\lim_{m\rightarrow+\infty}\frac{x_{2}^{\left(t_{0}+m\right)}}{\varpi\circ W\left(z^{\left(t_{0}+m\right)}\right)}=\begin{cases}
\qquad0 & \mbox{if }\gamma_{2}=0,\delta_{1}\neq0,\\
\frac{\delta_2x_{2}^{(t_{0})}}{x_{1}^{\left(t_{0}\right)}+x_{2}^{\left(t_{0}\right)}} & \mbox{if }\gamma_{2}=\delta_{1}=0,\\
\qquad\delta_2 & \mbox{if }\gamma_{2}\neq0,\delta_{1}=0,
\end{cases}
\]
and for $t\geq t_0+m+1$
\[
\lim_{m\rightarrow+\infty}\frac{y^{(t)}}{\varpi\circ W\left(z^{(t)}\right)}=\begin{cases}
\qquad\gamma & \mbox{if }\gamma_{2}=0,\delta_{1}\neq0,\\
\frac{\gamma x_1^{(t_0)}+\delta x_{2}^{(t_{0})}}{x_{1}^{\left(t_{0}\right)}+x_{2}^{\left(t_{0}\right)}} & \mbox{if }\gamma_{2}=\delta_{1}=0,\\
\qquad\delta & \mbox{if }\gamma_{2}\neq0,\delta_{1}=0.
\end{cases}
\]
Case 2: If there is $m_0\geq1$ such as $x_{1}^{\left(t_{0}+m_0\right)}=0$ then from $z^{\left(0\right)}\in S^{\,2}$ and by the formula for $x_{1}^{\left(t_{0}+m\right)}$
we get $x_{1}^{\left(t_{0}\right)}=0$ and $\delta_{1}=0$ thus
$x_{1}^{\left(t_{0}+m\right)}=0$ for every $m\geq1$ and we
get easily $\lim_{t\rightarrow+\infty}V^{t}\left(z^{\left(0\right)}\right)=\left(0,1,0\right)$.

\medskip{}
(\emph{b}) The case $\Delta>0$.

Let $\lambda_{1}<\lambda_{2}$ be the roots of $\chi_{M}$. Modulo
$\chi_{M}$ we have for all $t\geq0$: $$X^{t}\equiv\frac{\lambda_{2}^{t}-\lambda_{1}^{t}}{\lambda_{2}-\lambda_{1}}X-\lambda_{1}\lambda_{2}\frac{\lambda_{2}^{t-1}-\lambda_{1}^{t-1}}{\lambda_{2}-\lambda_{1}}$$
and with $\theta_{t}=\frac{\lambda_{2}^{t}-\lambda_{1}^{t}}{\lambda_{2}-\lambda_{1}}$
we have $M^{t}=\theta_{t}M-\lambda_{1}\lambda_{2}\theta_{t-1}I_{2}$
and thus for all $m\geq1$:
\begin{eqnarray*}
x_{1}^{\left(t_{0}+m\right)} & = & \left[\left(\gamma_{1}\theta_{t_{0}+m}-\lambda_{1}\lambda_{2}\theta_{t_{0}+m-1}\right)x_{1}^{\left(t_{0}\right)}+\delta_{1}\theta_{t_{0}+m}x_{2}^{\left(t_{0}\right)}\right]y^{\left(t_{0}\right)}\\
x_{2}^{\left(t_{0}+m\right)} & = & \left[\gamma_{2}\theta_{t_{0}+m}x_{1}^{\left(t_{0}\right)}+\left(\delta_{2}\theta_{t_{0}+m}-\lambda_{1}\lambda_{2}\theta_{t_{0}+m-1}\right)x_{2}^{\left(t_{0}\right)}\right]y^{\left(t_{0}\right)},
\end{eqnarray*}
hence
\begin{eqnarray*}
y^{\left(t_{0}+m\right)} & =y^{\left(t_{0}\right)} & \prod_{k=0}^{m-1}\left(\gamma x_{1}^{\left(t_{0}+k\right)}+\delta x_{2}^{\left(t_{0}+k\right)}\right).
\end{eqnarray*}

Let's prove that $|\lambda_1|<1$ and $|\lambda_2|<1.$ Since $\gamma_2<1-\gamma_1, \delta_1<1-\delta_2$ we get $0<\Delta=(\gamma_{1}-\delta_{2})^{2}+4\gamma_{2}\delta_{1}<(\gamma_{1}-\delta_{2})^{2}+4(1-\gamma_1)(1-\delta_2)=(\gamma_1+\delta_2-2)^2.$ From this we obtain $\lambda_2=\frac{\gamma_1+\delta_2+\sqrt{\Delta}}{2}<1$ and $\lambda_1=\frac{\gamma_1+\delta_2-\sqrt{\Delta}}{2}>\gamma_1+\delta_2-1>-1.$ So, $|\lambda_1|<1, \ \ |\lambda_2|<1$ and from this one has $\theta_t\rightarrow0$ as $t\rightarrow+\infty.$ Thus, we get $\lim_{t\rightarrow+\infty}x_{1}^{\left(t\right)}=\lim_{t\rightarrow+\infty}x_{2}^{\left(t\right)}=0$ and as previous case $\lim_{t\rightarrow+\infty}y^{\left(t\right)}=0.$

To study the operator $V$ for $z^{\left(0\right)}\in S^{\,2},$ by considering two cases as in (\emph{a}), we can get the proof of theorem.
\end{proof}
\textbf{Application}. \emph{Dosage compensation and X inactivation
	in mammals}.

In the XY-sex determination system, the female has two X chromosomes
and the male only one. The X chromosome carries many genes involved
in the functioning of cells, so in the absence of regulation,
a female would produce twice as many proteins coded by these
genes as a male, which would cause dysfunctions in these cells.
In the early stages of female embryo formation, a mechanism called
\emph{dosage compensation} (or \emph{lyonization}) inactivates
one of the two X chromosomes. The \emph{X} inactivation is controlled
by a short region on the \emph{X} chromosome called the \emph{X-inactivation
	center} (\emph{Xic}), the \emph{Xic} is active on the inactivated
\emph{X} chromosome. The \emph{Xic} site
is necessary and sufficient to cause the \emph{X} inactivation:
presence in a female embryo of one non-functional site \emph{Xic}
is lethal. 

If we denote by $X^{*}$ a gonosome $X$ carrying a non-functional
site \emph{Xic}, there are only three genotypes $XY$, $X^{*}Y$,
$XX$, thus the associated gonosomal algebra is of type $\left(1,2\right)$.
And in the definition of the gonosomal operator $W$, variables
$x_{1}^{\left(t\right)}$, $x_{2}^{\left(t\right)}$, $y^{\left(t\right)}$
are respectively associated to genotypes $XY$, $X^{*}Y$, $XX$.

Using Proposition \ref{prop:Iso-gonosomal-algebra} and \ref{prop:Equivalent_W}, Definition \ref{va2} and Proposition \ref{va7}, the results obtained in this section apply to this situation.

\subsection{Asymptotic behavior of trajectories in the case (\Female \ lethal
recessive, \Male \ non-lethal)}

\textcompwordmark{}

In this case only the genotype $X^{*}X^{*}$ is lethal, thus
we observe only the types $XX$, $XX^{*}$, $X^{*}Y$ and $XY$.
The general case of the dynamic system associated with this situation
is complex, for this reason we will study a simpler case motivated
by the following example.

In humans, hemophilia is a genetic disease caused by mutation
of a gene encoding coagulation factors and located on the $X$
gonosome. It is a gonosomal recessive lethal disease, meaning
that there are no homozygous women for the mutation, heterozygous
women have not hemophilia but are carriers and only men are met.
As many as one-third of hemophiliacs have no affected family
members, reflecting a high mutation rate ('de novo' mutations).\medskip{}

We denote $\mu$ (resp. $\eta$) where $0\leq\mu,\eta\leq1$,
the mutation rate from $X$ to $X^{*}$ in maternal (resp. paternal)
gametes. Assuming that during oogenesis and spermatogenesis mutation
when it occurs in a cell affects only one gonosome $X$ both
and considering that a mutated gene does not return to the wild
type, after gametogenesis we observe the following rates:

\begin{center}
\begin{tabular}{ll}
$XX\rightarrowtail\left(1-\mu\right)X+\mu X^{*}$, & \quad{}$XY\rightarrowtail\frac{1-\eta}{2}X+\frac{\eta}{2}X^{*}+\frac{1}{2}Y$,$\medskip$\tabularnewline
$XX^{*}\rightarrowtail\frac{1-\mu}{2}X+\frac{1+\mu}{2}X^{*}$, & \quad{}$X^{*}Y\rightarrowtail\frac{1}{2}X^{*}+\frac{1}{2}Y$.\tabularnewline
\end{tabular}
\par\end{center}

Therefore after breeding the genotypes frequency distribution
is given in the following Punnet square:

\begin{center}
\begin{tabular}{clrrr}
$XX\times XY$ & $\rightarrowtail$\quad{} $\frac{\left(1-\mu\right)\left(1-\eta\right)}{2-\mu\eta}XX,$ & $\frac{\mu+\eta-2\mu\eta}{2-\mu\eta}XX^{*},$ & $\frac{1-\mu}{2-\mu\eta}XY,$ & $\frac{\mu}{2-\mu\eta}X^{*}Y$$\medskip$\tabularnewline
$XX\times X^{*}Y$ & $\rightarrowtail$ & $\frac{1-\mu}{2-\mu}XX^{*},$ & $\frac{1-\mu}{2-\mu}XY,$ & $\frac{\mu}{2-\mu}X^{*}Y$$\medskip$\tabularnewline
$XX^{*}\times XY$ & $\rightarrowtail$\quad{} $\frac{\left(1-\mu\right)\left(1-\eta\right)}{4-\left(1+\mu\right)\nu}XX,$ & $\frac{1+\mu-2\mu\eta}{4-\left(1+\mu\right)\eta}XX^{*},$ & $\frac{1-\mu}{4-\left(1+\mu\right)\eta}XY,$ & $\frac{1+\mu}{4-\left(1+\mu\right)\eta}X^{*}Y$$\medskip$\tabularnewline
$XX^{*}\times X^{*}Y$ & $\rightarrowtail$ & $\frac{1-\mu}{3-\mu}XX^{*},$ & $\frac{1-\mu}{3-\mu}XY,$ & $\frac{1+\mu}{3-\mu}X^{*}Y$\tabularnewline
\end{tabular}
\par\end{center}

Algebra associated with this situation is the gonomal $\mathbb{R}$-algebra
of type $\left(2,2\right)$, with basis $\left(e_{1},e_{2}\right)\cup\left(\widetilde{e}_{1},\widetilde{e}_{2}\right)$
and commutative multiplication table:
\begin{eqnarray*}
e_{1}\widetilde{e}_{1} & = & \tfrac{\left(1-\mu\right)\left(1-\eta\right)}{2-\mu\eta}e_{1}+\tfrac{\mu+\eta-2\mu\eta}{2-\mu\eta}e_{2}+\tfrac{1-\mu}{2-\mu\eta}\widetilde{e}_{1}+\tfrac{\mu}{2-\mu\eta}\widetilde{e}_{2}\\
e_{1}\widetilde{e}_{2} & = & \tfrac{1-\mu}{2-\mu}e_{2}+\tfrac{1-\mu}{2-\mu}\widetilde{e}_{1}+\tfrac{\mu}{2-\mu}\widetilde{e}_{2}\\
e_{2}\widetilde{e}_{1} & = & \tfrac{\left(1-\mu\right)\left(1-\eta\right)}{4-\left(1+\mu\right)\nu}e_{1}+\tfrac{1+\mu-2\mu\eta}{4-\left(1+\mu\right)\nu}e_{2}+\tfrac{1-\mu}{4-\left(1+\mu\right)\nu}\widetilde{e}_{1}+\tfrac{1+\mu}{4-\left(1+\mu\right)\nu}\widetilde{e}_{2}\\
e_{2}\widetilde{e}_{2} & = & \tfrac{1-\mu}{3-\mu}e_{2}+\tfrac{1-\mu}{3-\mu}\widetilde{e}_{1}+\tfrac{1+\mu}{3-\mu}\widetilde{e}_{2}
\end{eqnarray*}
not mentioned products are zero.

\medskip{}

From (\ref{eq:Op-W}) the dynamical system associated with this
algebra is:

\begin{equation}
W_{\mu,\eta}:\begin{cases}
\begin{array}{ccllll}
x_{1}' & = & \frac{\left(1-\mu\right)\left(1-\eta\right)}{2-\mu\eta}x_{1}y_{1} &  & +\frac{\left(1-\mu\right)\left(1-\eta\right)}{4-\left(1+\mu\right)\eta}x_{2}y_{1}\\
x_{2}' & = & \frac{\mu+\eta-2\mu\eta}{2-\mu\eta}x_{1}y_{1} & +\frac{1-\mu}{2-\mu}x_{1}y_{2} & +\frac{1+\mu-2\mu\eta}{4-\left(1+\mu\right)\eta}x_{2}y_{1} & +\frac{1-\mu}{3-\mu}x_{2}y_{2}\\
y_{1}' & = & \frac{1-\mu}{2-\mu\eta}x_{1}y_{1} & +\frac{1-\mu}{2-\mu}x_{1}y_{2} & +\frac{1-\mu}{4-\left(1+\mu\right)\eta}x_{2}y_{1} & +\frac{1-\mu}{3-\mu}x_{2}y_{2}\\
y_{2}' & = & \frac{\mu}{2-\mu\eta}x_{1}y_{1} & +\frac{\mu}{2-\mu}x_{1}y_{2} & +\frac{1+\mu}{4-\left(1+\mu\right)\eta}x_{2}y_{1} & +\frac{1+\mu}{3-\mu}x_{2}y_{2}
\end{array}\end{cases}\label{eq:SD-Hemophilia}
\end{equation}

\begin{pro}
Fixed points for the operators $W_{1,1}$ and $W_{1,\eta}$ is
$\left(0,0,0,0\right)$ and for $W_{\mu,1}$ are $\left(0,0,0,0\right)$
and $\left(0,\frac{3-\mu}{2},\frac{3-\mu}{2},\frac{\left(1+\mu\right)\left(3-\mu\right)}{2\left(1-\mu\right)}\right)$.\end{pro}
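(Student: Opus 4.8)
The plan is to work directly with the explicit system \eqref{eq:SD-Hemophilia}: a point is an equilibrium of $W$ precisely when it is a fixed point, so it suffices to specialize the parameters and solve $z'=z$ coordinate by coordinate. Note first that the denominators $2-\mu\eta$, $4-(1+\mu)\eta$, $2-\mu$, $3-\mu$ stay strictly positive for all $\mu,\eta\in[0,1]$, so the substitutions below are legitimate. The whole proof is then an elementary reduction; the only real work is the careful evaluation of the ten structure constants at the specialized values and spotting the cancellations they produce.

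For $W_{1,1}$ (i.e. $\mu=\eta=1$) I would observe that every coefficient carrying a factor $1-\mu$ or $1-\eta$ vanishes, and that in addition $\tfrac{\mu+\eta-2\mu\eta}{2-\mu\eta}=\tfrac{1+\mu-2\mu\eta}{4-(1+\mu)\eta}=0$; the operator collapses to $x_1'=x_2'=y_1'=0$ and $y_2'=(x_1+x_2)(y_1+y_2)$. The fixed-point equations then force $x_1=x_2=y_1=0$, hence $y_2=(x_1+x_2)(y_1+y_2)=0$, leaving only $(0,0,0,0)$. For $W_{1,\eta}$ (i.e. $\mu=1$, $\eta$ arbitrary) the factor $1-\mu$ again annihilates $x_1'$ and $y_1'$ identically, while $x_2'=\tfrac{1-\eta}{2-\eta}(x_1+x_2)y_1$ and $y_2'=\tfrac{1}{2-\eta}(x_1+x_2)y_1+(x_1+x_2)y_2$. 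From $x_1'=x_1$ and $y_1'=y_1$ we get $x_1=y_1=0$; then $x_2'=x_2$ gives $x_2=0$, and finally $y_2'=y_2$ gives $y_2=0$, so again $(0,0,0,0)$ is the unique fixed point.

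For $W_{\mu,1}$ (i.e. $\eta=1$, and necessarily $\mu\ne1$, since $\mu=1$ falls back to the previous case) I would substitute $\eta=1$ into \eqref{eq:SD-Hemophilia}; the key observation is that the resulting system simplifies to
\[
x_1'=0,\qquad x_2'=y_1'=\Bigl(\tfrac{1-\mu}{2-\mu}x_1+\tfrac{1-\mu}{3-\mu}x_2\Bigr)(y_1+y_2),\qquad y_2'=\Bigl(\tfrac{\mu}{2-\mu}x_1+\tfrac{1+\mu}{3-\mu}x_2\Bigr)(y_1+y_2),
\]
the identity $x_2'=y_1'$ being exactly what makes things solvable in closed form. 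From $x_1=x_1'=0$ and $x_2=x_2'=y_1'=y_1$ we get $x_1=0$ and $x_2=y_1$; writing $S=y_1+y_2$, the remaining equations are $x_2=\tfrac{1-\mu}{3-\mu}x_2S$ and $y_2=\tfrac{1+\mu}{3-\mu}x_2S$. If $x_2=0$ this yields $y_1=y_2=0$ and the trivial fixed point; otherwise dividing by $x_2$ gives $S=\tfrac{3-\mu}{1-\mu}$, hence $y_2=\tfrac{1+\mu}{1-\mu}x_2$, and then $S=y_1+y_2=\tfrac{2}{1-\mu}x_2$ forces $x_2=\tfrac{3-\mu}{2}$, whence $y_1=\tfrac{3-\mu}{2}$ and $y_2=\tfrac{(1+\mu)(3-\mu)}{2(1-\mu)}$, which is the announced point $\bigl(0,\tfrac{3-\mu}{2},\tfrac{3-\mu}{2},\tfrac{(1+\mu)(3-\mu)}{2(1-\mu)}\bigr)$. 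I would close by checking that this point indeed satisfies $z'=z$.

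The main (and only) obstacle is the bookkeeping: one must correctly simplify all the structure constants of \eqref{eq:SD-Hemophilia} at the specialized parameter values and recognize the collapses, most importantly $x_2'=y_1'$ in the $\eta=1$ case, after which the fixed-point problem is just linear algebra in the two unknowns $x_2$ and $S=y_1+y_2$.
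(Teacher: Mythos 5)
Your proof is correct and takes essentially the same route as the paper: specialize the parameters in \eqref{eq:SD-Hemophilia}, use the resulting collapses (in particular $x_1'=0$ and $x_2'=y_1'$ when $\eta=1$, $\mu\neq1$) to reduce the fixed-point equations, and solve them directly, the only cosmetic difference being that the paper finds $x_2=\tfrac{3-\mu}{2}$ by summing the $y_1$ and $y_2$ equations while you first solve for $S=y_1+y_2$.
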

\begin{proof}
Let $z=\left(x_{1},x_{2},y_{1},y_{2}\right)$, consider the equation
$z=W_{\mu,\eta}\left(z\right)$.

a) If $\mu=\eta=1$ we get immediately in (\ref{eq:SD-Hemophilia}):
$x_{1}=x_{2}=y_{1}=0$ and thus $y_{2}=0$.

b) If $\mu=1$ and $\eta\neq1$, in (\ref{eq:SD-Hemophilia})
with $\mu=1$ we get $x_{1}=y_{1}=0$ it follows that $x_{2}=y_{2}=0$.

c) If $\mu\neq1$ and $\eta=1$, fixed points $\left(x_{1},x_{2},y_{1},y_{2}\right)$
of operator $W_{\mu,1}$ verify
\begin{equation}
\begin{cases}
\begin{array}{ccl}
x_{1} & = & 0\\
x_{2} & = & \frac{1-\mu}{3-\mu}x_{2}\left(y_{1}+y_{2}\right)\\
y_{1} & = & \frac{1-\mu}{3-\mu}x_{2}\left(y_{1}+y_{2}\right)\\
y_{2} & = & \frac{1+\mu}{3-\mu}x_{2}\left(y_{1}+y_{2}\right),
\end{array}\end{cases}\label{eq:Pt_Fix_Wm,1}
\end{equation}

If $y_{1}+y_{2}=0$ we have $x_{1}=x_{2}=y_{1}=y_{2}=0$. It
is assumed that $y_{1}+y_{2}\neq0$, by summing the last two
equations of (\ref{eq:Pt_Fix_Wm,1}) we get $y_{1}+y_{2}=\frac{2}{3-\mu}x_{2}\left(y_{1}+y_{2}\right)$
thus $x_{2}=\frac{3-\mu}{2}$ then $y_{1}=\frac{1-\mu}{2}\left(y_{1}+y_{2}\right)$
and $y_{2}=\frac{1+\mu}{2}\left(y_{1}+y_{2}\right)$ hence $y_{1}=\frac{1-\mu}{1+\mu}y_{2}$
it follows $y_{1}+y_{2}=\frac{2}{1+\mu}y_{2}$ and with the equation
giving $y_{2}$ in (\ref{eq:Pt_Fix_Wm,1}) we get $y_{2}=\frac{\left(1+\mu\right)\left(3-\mu\right)}{2\left(1-\mu\right)}$
hence $y_{1}=\frac{3-\mu}{2}$. Finally the fixed points of $W_{\mu,1}$
are: $\left(0,0,0,0\right)$ and $\left(0,\frac{3-\mu}{2},\frac{3-\mu}{2},\frac{\left(1+\mu\right)\left(3-\mu\right)}{2\left(1-\mu\right)}\right)$.\end{proof}

\begin{pro}
For all $z=\left(x_{1},x_{2},y_{1},y_{2}\right)\in\mathbb{R}^{4}$
and $0\leq\mu,\eta\leq1$ we have:

a) $W_{1,1}^{n}\left(z\right)=0$ for every $n\geq2$.

b) $W_{1,\eta}^{n}\left(z\right)=0$ for each $n\geq3$.

c) $\lim_{n\rightarrow\infty}W_{\mu,1}^{n}\left(z\right)=\begin{cases}
0 & \mbox{if }\left|\frac{x_{1}}{2-\mu}+\frac{x_{2}}{3-\mu}\right|\cdot\left|y_{1}+y_{2}\right|\leq\frac{1}{\left(1-\mu\right)^{2}}\medskip\\
+\infty & \mbox{if }\left|\frac{x_{1}}{2-\mu}+\frac{x_{2}}{3-\mu}\right|\cdot\left|y_{1}+y_{2}\right|>\frac{1}{\left(1-\mu\right)^{2}}.
\end{cases}$ \smallskip{}

And for the normalized gonosomal operator $V_{\mu,1}$ defined
by $W_{\mu,1}$ we have:
\[
V_{\mu,1}^{n}\left(z\right)=\left(0,\frac{1-\mu}{3-\mu},\frac{1-\mu}{3-\mu},\frac{1+\mu}{3-\mu}\right),\quad\forall n\geq1.
\]
\end{pro}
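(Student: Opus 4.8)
The plan is to substitute the specified parameter values directly into the coordinate form (\ref{eq:SD-Hemophilia}), follow the orbit by an elementary computation, and invoke Proposition \ref{prop:W=000026O} to decide when it stabilizes at $0$ and Proposition \ref{prop:W=000026O}(e) to transfer the conclusion from $W$ to $V$. For a) and b) this is immediate: with $\mu=\eta=1$ every structure constant in (\ref{eq:SD-Hemophilia}) dies except those in the last line, so $W_{1,1}(z)=\bigl(0,0,0,(x_{1}+x_{2})(y_{1}+y_{2})\bigr)$, a point with both first coordinates zero, hence in $\mathcal{O}^{\,2,2}$; Proposition \ref{prop:W=000026O}(b) then gives $W_{1,1}^{2}(z)=0$ and Proposition \ref{prop:W=000026O}(a) gives $W_{1,1}^{n}(z)=0$ for all $n\geq2$. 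With $\mu=1$ and $\eta$ free, (\ref{eq:SD-Hemophilia}) yields $x_{1}'=y_{1}'=0$ and $x_{2}'=\tfrac{1-\eta}{2-\eta}(x_{1}+x_{2})y_{1}$, so $W_{1,\eta}(z)$ has the shape $(0,\ast,0,\ast)$; applying $W_{1,\eta}$ once more to such a point (now with $x_{1}=y_{1}=0$ and $\mu=1$) produces $(0,0,0,\ast)\in\mathcal{O}^{\,2,2}$, whence $W_{1,\eta}^{3}(z)=0$ and $W_{1,\eta}^{n}(z)=0$ for $n\geq3$ by the same two parts of Proposition \ref{prop:W=000026O}.

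For c) set $\eta=1$ in (\ref{eq:SD-Hemophilia}); it reduces to $x_{1}'=0$, $x_{2}'=y_{1}'=(1-\mu)\bigl(\tfrac{x_{1}}{2-\mu}+\tfrac{x_{2}}{3-\mu}\bigr)(y_{1}+y_{2})$ and $y_{2}'=\bigl(\tfrac{\mu x_{1}}{2-\mu}+\tfrac{(1+\mu)x_{2}}{3-\mu}\bigr)(y_{1}+y_{2})$. Hence along the orbit $z^{(t)}$ one has $x_{1}^{(t)}=0$ and $x_{2}^{(t)}=y_{1}^{(t)}$ for all $t\geq1$, and writing $u_{t}:=x_{2}^{(t)}=y_{1}^{(t)}$, $v_{t}:=y_{2}^{(t)}$ the dynamics collapses to $u_{t+1}=\tfrac{1-\mu}{3-\mu}u_{t}(u_{t}+v_{t})$, $v_{t+1}=\tfrac{1+\mu}{3-\mu}u_{t}(u_{t}+v_{t})$ for $t\geq1$. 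From the second generation on, $v_{t}/u_{t}=\tfrac{1+\mu}{1-\mu}$ whenever $u_{t}\neq0$, so $u_{t}+v_{t}=\tfrac{2}{1-\mu}u_{t}$ and the recursion becomes the purely quadratic $u_{t+1}=\tfrac{2}{3-\mu}u_{t}^{2}$; setting $p_{t}:=\tfrac{2}{3-\mu}u_{t}$ this is $p_{t+1}=p_{t}^{2}$, so $p_{t}=p_{2}^{\,2^{t-2}}$ for $t\geq2$. Unwinding back to $x_{2}^{(t)},y_{1}^{(t)},y_{2}^{(t)}$, the three regimes $|p_{2}|<1$, $|p_{2}|=1$, $|p_{2}|>1$ — equivalently the comparison of $\bigl|\tfrac{x_{1}}{2-\mu}+\tfrac{x_{2}}{3-\mu}\bigr|\cdot|y_{1}+y_{2}|$ with the stated threshold — give respectively $W^{n}(z)\to0$, convergence to the nonzero fixed point, and $W^{n}(z)\to+\infty$; the degenerate cases $u_{1}(u_{1}+v_{1})=0$ (which includes $y_{1}+y_{2}=0$) send $W^{2}(z)$ to $0$.

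For the operator $V_{\mu,1}$ with $z\in S^{\,2,2}$, one first notes that $\tfrac{x_{1}}{2-\mu}+\tfrac{x_{2}}{3-\mu}>0$ and $y_{1}+y_{2}>0$ there, so $u_{t}>0$ for all $t\geq1$ and $\varpi\circ W^{t}(z)\neq0$; Proposition \ref{prop:W=000026O}(e) then gives $V^{t}(z)=\tfrac{1}{\varpi\circ W^{t}(z)}W^{t}(z)$. Since $\varpi\circ W^{t}(z)=u_{t-1}(u_{t-1}+v_{t-1})$ by (\ref{omega}) while $u_{t}=\tfrac{1-\mu}{3-\mu}u_{t-1}(u_{t-1}+v_{t-1})$ and $v_{t}=\tfrac{1+\mu}{3-\mu}u_{t-1}(u_{t-1}+v_{t-1})$ for $t\geq2$, the common factor cancels and $V^{t}(z)=\bigl(0,\tfrac{1-\mu}{3-\mu},\tfrac{1-\mu}{3-\mu},\tfrac{1+\mu}{3-\mu}\bigr)$, the value for $t=1$ being obtained by the same division applied to $W(z)$ directly.

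The only genuinely delicate point is part c): the first iterate must be treated on its own, since there the ratio $v_{1}/u_{1}$ need not yet equal $\tfrac{1+\mu}{1-\mu}$; signs have to be tracked carefully because $z$ runs over all of $\mathbb{R}^{4}$ in the $W$-statement; and the configurations in which the orbit collapses to $0$ before the quadratic regime $p_{t+1}=p_{t}^{2}$ takes over must be isolated. Once these are set aside, the three limits are read off directly from the closed form $p_{t}=p_{2}^{\,2^{t-2}}$, and the $V$-statement is the quotient computation above.
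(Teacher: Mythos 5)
Your parts a) and b) are correct and follow the paper's own route: one application of $W_{1,1}$ (respectively two applications of $W_{1,\eta}$) produces a point of $\mathcal{O}^{\,2,2}$, and Proposition \ref{prop:W=000026O} finishes; nothing to object to there.

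The genuine gap is in part c), at exactly the step that carries the content of the statement. Your reduction is fine: for $t\geq1$ one has $x_{1}^{(t)}=0$, $x_{2}^{(t)}=y_{1}^{(t)}=u_{t}$, and for $t\geq2$ the scalar recursion $p_{t+1}=p_{t}^{2}$ with $p_{t}=\tfrac{2}{3-\mu}u_{t}$; you even flag, correctly, that the first iterate must be treated separately because $v_{1}/u_{1}\neq\tfrac{1+\mu}{1-\mu}$ when $x_{1}\neq0$. But you then assert, with no computation, that the trichotomy $|p_{2}|<1$, $=1$, $>1$ is ``equivalently'' the comparison of $\bigl|\tfrac{x_{1}}{2-\mu}+\tfrac{x_{2}}{3-\mu}\bigr|\cdot|y_{1}+y_{2}|$ with $\tfrac{1}{(1-\mu)^{2}}$. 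Carrying out your own first step gives $u_{1}=(1-\mu)AB$ and $u_{1}+v_{1}=CB$, where $A=\tfrac{x_{1}}{2-\mu}+\tfrac{x_{2}}{3-\mu}$, $B=y_{1}+y_{2}$, $C=\tfrac{x_{1}}{2-\mu}+\tfrac{2x_{2}}{3-\mu}$, hence $p_{2}=\tfrac{2(1-\mu)^{2}}{(3-\mu)^{2}}AC\,B^{2}$: this depends on $C$ and on $B^{2}$ and is not a function of $|A|\,|B|$ alone (even for $x_{1}=0$, where $C=2A$, the criterion $|p_{2}|<1$ reads $|AB|<\tfrac{3-\mu}{2(1-\mu)}$, not $|AB|<\tfrac{1}{(1-\mu)^{2}}$). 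Moreover your boundary case $|p_{2}|=1$ yields convergence to the nonzero fixed point $\bigl(0,\tfrac{3-\mu}{2},\tfrac{3-\mu}{2},\tfrac{(1+\mu)(3-\mu)}{2(1-\mu)}\bigr)$, whereas the statement assigns the limit $0$ to the equality case; you never reconcile the two. So the decisive identification of the threshold is missing, and the criterion you actually derive is not the one you claim to prove — the ``equivalently'' is not an omitted routine verification but a false step, so the argument cannot be patched without confronting this discrepancy (which traces back to the closed formulas \eqref{eq:W_m,1} that the paper proves by induction and reads the limits off from; your scalar recursion is the same idea, but you skip precisely the explicit dependence on the initial point that the paper's route makes unavoidable). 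The same first-iterate issue resurfaces in your $V$-part: for $n\geq2$ the cancellation is fine, but the ``same division applied to $W(z)$ directly'' gives $V(z)=\bigl(0,\tfrac{(1-\mu)A}{C},\tfrac{(1-\mu)A}{C},1-\tfrac{2(1-\mu)A}{C}\bigr)$, which is the claimed constant vector only when $x_{1}=0$; so your justification of the case $n=1$ fails for general $z\in S^{2,2}$ — the very subtlety you noticed for $W$ and then overlooked for $V$.
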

\begin{proof}
\emph{a}) If $\mu=\eta=1$, the system (\ref{eq:SD-Hemophilia})
becomes:
\[
\begin{cases}
x_{1}' & =\;x_{2}'=y_{1}'=0\\
y_{2}' & =\;\left(x_{1}+x_{2}\right)\left(y_{1}+y_{2}\right)
\end{cases}
\]
in other words, there are no more females in the first generation
and the population died in the second generation.\smallskip{}

\emph{b}) If $\mu=1$ and $\eta\neq1$, the system (\ref{eq:SD-Hemophilia})
is written:
\[
\begin{cases}
\begin{array}{ccllll}
x_{1}' & = & 0\\
x_{2}' & = & \frac{1-\eta}{2-\eta}x_{1}y_{1} &  & +\frac{1-\eta}{2-\eta}x_{2}y_{1}\\
y_{1}' & = & 0\\
y_{2}' & = & \frac{1}{2-\eta}x_{1}y_{1} & +x_{1}y_{2} & +\frac{1}{2-\eta}x_{2}y_{1} & +x_{2}y_{2}
\end{array}\end{cases}
\]
for $z=\left(x_{1},x_{2},y_{1},y_{2}\right)$ we find $z^{\left(2\right)}=\bigl(0,0,0,\left(\frac{1-\eta}{2-\eta}\right)^{2}\left(x_{1}+x_{2}\right)^{2}y_{1}^{2}\bigr)$
and thus $z^{\left(3\right)}=\left(0,0,0,0\right)$, the population
goes out to the third generation.\smallskip{}

\emph{c}) With $\mu\neq1$ and $\eta=1$, the system (\ref{eq:SD-Hemophilia})
becomes:
\[
\begin{cases}
\begin{array}{ccl}
x_{1}' & = & 0\\
x_{2}' & = & \left(\frac{1-\mu}{2-\mu}x_{1}+\frac{1-\mu}{3-\mu}x_{2}\right)\left(y_{1}+y_{2}\right)\\
y_{1}' & = & \left(\frac{1-\mu}{2-\mu}x_{1}+\frac{1-\mu}{3-\mu}x_{2}\right)\left(y_{1}+y_{2}\right)\\
y_{2}' & = & \left(\frac{\mu}{2-\mu}x_{1}+\frac{1+\mu}{3-\mu}x_{2}\right)\left(y_{1}+y_{2}\right).
\end{array}\end{cases}
\]

If for $z=\left(x_{1},x_{2},y_{1},y_{2}\right)\in S^{2,2}$ and
$n\geq0$, we put $W_{\mu,1}^{n}\left(z\right)=\bigl(x_{1}^{\left(n\right)},x_{2}^{\left(n\right)},y_{1}^{\left(n\right)},y_{2}^{\left(n\right)}\bigr)$,
we show that
\begin{eqnarray}
x_{1}^{\left(n+1\right)} & = & 0\nonumber \\
x_{2}^{\left(n+1\right)} & = & 2^{2^{n}-1}\frac{\left(1-\mu\right)^{2^{n+1}-1}}{\left(3-\mu\right)^{2^{n}-1}}\left(\frac{x_{1}}{2-\mu}+\frac{x_{2}}{3-\mu}\right)^{2^{n}}\left(y_{1}+y_{2}\right)^{2^{n}}\nonumber \\
y_{1}^{\left(n+1\right)} & = & x_{2}^{\left(n+1\right)}\label{eq:W_m,1}\\
y_{2}^{\left(n+1\right)} & = & 2^{2^{n}-1}\left(1+\mu\right)\frac{\left(1-\mu\right)^{2^{n+1}-2}}{\left(3-\mu\right)^{2^{n}-1}}\left(\frac{x_{1}}{2-\mu}+\frac{x_{2}}{3-\mu}\right)^{2^{n}}\left(y_{1}+y_{2}\right)^{2^{n}}.\nonumber
\end{eqnarray}
We have $\frac{2}{3}<\frac{2}{3-\mu}<1$, $x_{2}^{\left(n+1\right)}=\frac{1}{1-\mu}\left(\frac{2}{3-\mu}\right)^{2^{n}-1}\left[\left(1-\mu\right)^{2}\left(\frac{x_{1}}{2-\mu}+\frac{x_{2}}{3-\mu}\right)\left(y_{1}+y_{2}\right)\right]^{2^{n}}$,
$y_{1}^{\left(n+1\right)}=x_{2}^{\left(n+1\right)}$ and $y_{2}^{\left(n+1\right)}=\frac{1-\mu}{1+\mu}x_{2}^{\left(n+1\right)}$
from which we deduce the limit values of $W_{\mu,1}^{n}$.

From (\ref{eq:W_m,1}) we get $\varpi\circ W_{\mu,1}^{n}\left(z\right)=2^{2^{n}-1}\frac{\left(1-\mu\right)^{2^{n+1}-2}}{\left(3-\mu\right)^{2^{n}-2}}\left(\frac{x_{1}}{2-\mu}+\frac{x_{2}}{3-\mu}\right)^{2^{n}}\left(y_{1}+y_{2}\right)^{2^{n}}$,
for all $n\geq1$ and by normalization of terms given by (\ref{eq:W_m,1})
we get the $V_{\mu,1}^{n}$ components $\left(0,\frac{1-\mu}{3-\mu},\frac{1-\mu}{3-\mu},\frac{1+\mu}{3-\mu}\right)$
for all $n\geq1$.
\end{proof}
Now in what follows we assume that $\mu,\eta\neq1$.

\begin{pro}
For any $z=(x_1,x_2,y_1,y_2)\in S^{2,2}$
and $0\leq\mu,\eta\leq1$ the trajectory $\{z^{(n)}\}$ tends to the fixed point $0$ exponentially fast.
\end{pro}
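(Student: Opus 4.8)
The plan is to forget the simplex entirely and follow only the linear form $\varpi$ along the orbit, using the quadratic self-bound of $\varpi\circ W$. First I would observe that for $0\le\mu,\eta<1$ the operator $W_{\mu,\eta}$ of \eqref{eq:SD-Hemophilia} is the gonosomal operator of a gonosomal \emph{stochastic} algebra of type $\left(2,2\right)$: every coefficient occurring in \eqref{eq:SD-Hemophilia} is non-negative, since the denominators $2-\mu\eta,\ 4-(1+\mu)\eta,\ 2-\mu,\ 3-\mu$ are positive (they are $\ge1,\ \ge2,\ \ge1,\ \ge2$ respectively) and each numerator is a sum of non-negative terms, e.g.\ $\mu+\eta-2\mu\eta=\mu(1-\eta)+\eta(1-\mu)\ge0$ and $1+\mu-2\mu\eta\ge1-\mu\ge0$. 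Consequently part a) of Proposition \ref{pro} gives $W_{\mu,\eta}\bigl(\mathbb{R}_{+}^{4}\bigr)\subset\mathbb{R}_{+}^{4}$, so starting from $z\in S^{2,2}\subset\mathbb{R}_{+}^{4}$ every iterate $z^{(n)}=W_{\mu,\eta}^{\,n}(z)$ has non-negative coordinates, and on $\mathbb{R}_{+}^{4}$ the $\ell^{1}$-norm agrees with $\varpi$, i.e.\ $\bigl\|z^{(n)}\bigr\|=\varpi\bigl(z^{(n)}\bigr)$.

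The heart of the proof is the scalar recursion for $u_{n}:=\varpi\bigl(z^{(n)}\bigr)$. Applying $\varpi$ to the dynamics as in \eqref{omega} gives $u_{n+1}=\bigl(x_{1}^{(n)}+x_{2}^{(n)}\bigr)\bigl(y_{1}^{(n)}+y_{2}^{(n)}\bigr)$ while $u_{n}=x_{1}^{(n)}+x_{2}^{(n)}+y_{1}^{(n)}+y_{2}^{(n)}$; as all these coordinates are $\ge0$, the elementary inequality $4ab\le(a+b)^{2}$ yields
\[
u_{n+1}\le\tfrac14\,u_{n}^{\,2},\qquad n\ge0 .
\]
Since $z\in S^{2,2}\subset S^{3}$ we have $u_{0}=\varpi(z)=1$, so an immediate induction gives $u_{n}\le 4^{\,1-2^{n}}$ for every $n\ge0$; using $2^{n}\ge n+1$ this is at most $4^{-n}$, hence
\[
\bigl\|z^{(n)}\bigr\|=u_{n}\le 4^{\,1-2^{n}}\le\bigl(\tfrac14\bigr)^{n},\qquad n\ge0 .
\]
As $0$ is a fixed point of $W_{\mu,\eta}$, this shows $z^{(n)}\to0$, and the convergence is exponentially fast (indeed doubly exponentially, through the bound $4^{\,1-2^{n}}$). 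The same conclusion can alternatively be extracted directly from part c) of the Proposition bounding $\varpi\circ W^{t}$, with $\varpi(z)=1$ and $\gamma_{ij},\widetilde{\gamma}_{pq}\le1$, so that $\tfrac1{16}\gamma_{ij}\widetilde{\gamma}_{pq}\le\tfrac1{16}$.

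I do not expect a genuine obstacle here; the one point that needs care is conceptual rather than computational. Since $W_{\mu,\eta}$ does \emph{not} preserve the simplex $S^{2,2}$, one cannot argue within the simplex: the estimate must ride on the single scalar quantity $\varpi$, for which the contraction $\varpi\bigl(z^{(n+1)}\bigr)\le\tfrac14\varpi\bigl(z^{(n)}\bigr)^{2}$ together with the normalisation $\varpi(z)=1$ forces the (super)exponential decay regardless of the precise values of $\mu$ and $\eta$. The only laborious part is checking explicitly that all the inheritance coefficients of \eqref{eq:SD-Hemophilia} are non-negative so that part a) of Proposition \ref{pro} applies, which is routine.
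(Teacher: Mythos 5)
Your proof is correct and is essentially the paper's argument in a lightly repackaged form: the paper's Lyapunov function $F(z)=(x_1+x_2)(y_1+y_2)$ is exactly $\varpi\circ W(z)$, so your scalar recursion $u_{n+1}\le\tfrac14 u_n^2$ with $u_n=\varpi\bigl(z^{(n)}\bigr)$ is the paper's estimate $F\bigl(z^{(n)}\bigr)\le\bigl(\tfrac14\bigr)^{2^n}$ shifted by one index, both resting on $4ab\le(a+b)^2$, the normalization $\varpi(z)=1$, and the non-negativity of the inheritance coefficients to bound each coordinate by $\varpi\bigl(z^{(n)}\bigr)$.
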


\begin{proof} It is clear that $x_1^{(n)}\geq0, x_2^{(n)}\geq0, y_1^{(n)}\geq0, y_2^{(n)}\geq0$ for any $n\geq1$.  We choose the function
$F(z)=(x_1+x_2)(y_1+y_2)$ and show that $F(z)$ is a Lyapunov function for (\ref{eq:SD-Hemophilia}). Consider
$$F(z')=(x'_1+x'_2)(y'_1+y'_2)=(x'_1+x'_2+y'_1+y'_2)(y'_1+y'_2)-(y'_1+y'_2)^2.$$
Using b) of Proposition \ref{pro} we get that $y'_1+y'_2\leq\frac{1}{4}$ and from (\ref{omega}) we obtain
$$F(z')=(x_1+x_2)(y_1+y_2)(y'_1+y'_2)-(y'_1+y'_2)^2= (y'_1+y'_2) F(z)-(y'_1+y'_2)^2 \leq F(z).$$
Thus, the sequence $F(z^{(n)})$ is decreasing and bounded from below with 0, so it has a limit, i.e. it is a Lyapunov function. In addition, from b) of Proposition \ref{pro}

$$F(z')=(x'_1+x'_2)(y'_1+y'_2)\leq\left(\frac{1}{4}\right)^2,$$
on the other hand, $F(z')=x_1^{(2)}+x_2^{(2)}+y_1^{(2)}+y_2^{(2)}\leq\left(\frac{1}{4}\right)^2$ and from this we get $x_1^{(2)}+x_2^{(2)}\leq\left(\frac{1}{4}\right)^2, y_1^{(2)}+y_2^{(2)}\leq\left(\frac{1}{4}\right)^2.$ Thus, $F(z^{(2)})\leq\left(\frac{1}{4}\right)^{2^2}$ and so on. Hence, one has  $F(z^{(n)})\leq\left(\frac{1}{4}\right)^{2^n}$ for any $n\ge1$ and this guarantees that the limit of $F(z^{(n)})$ converges to 0. In addition, from $F(z^{(n)})=(x_1^{(n)}+x_2^{(n)})(y_1^{(n)}+y_2^{(n)})=x_1^{(n+1)}+x_2^{(n+1)}+y_1^{(n+1)}+y_2^{(n+1)}$ we obtain that 
$$0\leq x_1^{(n+1)}\leq F(z^{(n)}), \, 0\leq x_2^{(n+1)}\leq F(z^{(n)}), \, 0\leq y_1^{(n+1)}\leq F(z^{(n)}), \, 0\leq y_2^{(n+1)}\leq F(z^{(n)})$$
which completes the proof of the proposition.
\end{proof}

\end{document}